\newdimen\AAdi%
\newbox\AAbo%
\def\AAk#1#2{\s_etbox\AAbo=\hbox{#2}\AAdi=\wd\AAbo\kern#1\AAdi{}}%
\def\AAr#1#2#3{\s_etbox\AAbo=\hbox{#2}\AAdi=\ht\AAbo\raise#1\AAdi\hbox{#3}}%
\font\tenmsb=msbm10 at 12pt \font\sevenmsb=msbm7 at 8pt
\font\fivemsb=msbm5 at 6pt
\newtheorem{theorem}{Theorem}
\newtheorem{corollary}[theorem]{Corollary}
\newtheorem{lemma}[theorem]{Lemma}
\numberwithin{equation}{section} \numberwithin{theorem}{section}
\renewcommand{\topmargin}{0cm}
\renewcommand{\oddsidemargin}{5mm}
\renewcommand{\evensidemargin}{5mm}
\renewcommand{\textwidth}{150mm}
\renewcommand{\textheight}{230mm}
\def\C{\mathbb C}
\def\R{\mathbb R}
\def\Z{\mathbb Z}
\def\Z{\mathbb Z}
\def\S{\mathbb S}
\def\na{\nabla}
\def\bn{\overline\nabla}
\def\f#1#2{\frac{#1}{#2}}
\def\a{\alpha}
\def\be{\beta}
\def\r{\Re_{I\!V}}
\def\p#1{\partial #1}
\def\de{\delta}
\def\De{\Delta}
\def\e{\eta}
\def\ep{\epsilon}
\def\G{\Gamma}
\def\g{\gamma}
\def\k{\kappa}
\def\la{\lambda}
\def\La{\Lambda}
\def\lan{\langle}
\def\ran{\rangle}
\def\Om{\Omega}
\def\th{\theta}
\def\Th{\Theta}
\def\si{\sigma}
\def\Si{\Sigma}
\def\r{\rho}
\def\z{\zeta}
\def\div{\mathrm{div}}
\begin{document}

\title
{Minimal cones and self-expanding solutions for mean curvature flows}
\author{Qi Ding}
\address{Shanghai Center for Mathematical Sciences, Fudan University, Shanghai 200433, China}
\email{dingqi@fudan.edu.cn}\email{dingqi09@fudan.edu.cn}

\begin{abstract}
In this paper, we study self-expanding solutions for mean curvature flows and their relationship to minimal cones in Euclidean space. In \cite{Il1}, Ilmanen proved the existence of self-expanding hypersurfaces with prescribed tangent cones at infinity. If the cone is $C^{3,\a}$-regular and mean convex (but not area-minimizing), we can prove that the corresponding self-expanding hypersurfaces are smooth, embedded, and have positive mean curvature everywhere (see Theorem \ref{Intrmcse}).
As a result, for regular minimal but not area-minimizing cones we can give an affirmative answer to a problem arisen by Lawson \cite{Bro}.
\end{abstract}

\maketitle

\section{Introduction}

Minimal cones with isolated singularities in Euclidean space are cones over smooth minimal submanifolds in the unit sphere, whose singularities may or may not vanish after perturbation. In particular, isolated singularities may also exist in general embedded minimal hypersurfaces by Caffarelli-Hardt-Simon \cite{CHS}.
For area-minimizing cones with isolated singularities, there are several ways to perturb away these singularities.
Concerning one-sided perturbations, Hardt-Simon \cite{HaS} showed that there is an oriented connected embedded smooth minimizing hypersurface in every component of $\R^{n+1}\setminus C$, where $C$ is an arbitrary $n$-dimensional minimizing cone in $\R^{n+1}$. Such hypersurfaces form a smooth foliation and are unique if they lie on one side of $C$. After that, Simon-Solomon \cite{SS} showed the uniqueness of minimal hypersurfaces asymptotic to any area-minimizing quadratic minimal cone at infinity in Euclidean space, whereas the uniqueness fails for a large class of strictly minimizing cones showed by C. Chan \cite{Cc}. Concerning two-sided perturbations, the famous minimal graphs discovered by Bombieri-De Giorgi-Giusti in \cite{BDG}, are smooth area-minimizing hypersurfaces which converge to cartesian products of Simons' cones and $\R$ at infinity.
Moreover, R. McIntosh in \cite{M} gave a more general sufficient condition on area-minimizing cones for perturbing away singularities.

In Problem 5.7 of \cite{Bro}, B. Lawson suggested the following problem:

\emph{  Let $C$ be a stable (or minimizing) hypercone in $\R^{n+1}$, and $C_\ep$ be the $\ep$-neighbourhood of $C$ in $\R^{n+1}$. Given $\ep>0$, can one find a smooth embedded hypersurface of positive mean curvature properly embedded in $B_1(0)\cap C_\ep$?}

F.H. Lin first studied this problem and gave a confirmative answer for it when the dimension $n\le7$ in \cite{L1}. Furthermore, let $E$ be a connected component of $\R^{n+1}\setminus C$. If either $C$ is a stable cone with $n=7$ or $C$ is a one-sided area-minimizing cone in $\overline{E}$, Lin \cite{L1} showed the existence of smooth embedded hypersurfaces of positive mean curvature properly embedded in $B_1(0)\cap C_\ep\cap E$ for any small $\ep>0$. An one-sided area minimizing cone may not be area-minimizing, such as a cone over a minimal hypersurface $\S^1\left(\sqrt{\f16}\right)\times\S^5\left(\sqrt{\f56}\right)$ in $\S^7(1)$ proved by Lin \cite{L2}.

A \emph{self-expander} $M$ in Euclidean space is a critical point for the following functional defined on any bounded set $K\subset M$ by
\begin{equation}\label{EK}
\int_K e^{\f{|X|^2}4}d\mu,
\end{equation}
where $d\mu$ is the volume element of $M$ (see \eqref{SE} for the definition). Equivalently, a submanifold $M$ is a self-expander if and only if $\mathcal{M}:\ t\in(0,\infty)\rightarrow\sqrt{t}M$ is a mean curvature flow.
In rough speaking, self-expanders describe the asymptotic behaviour of the longtime solutions for mean curvature flow, and even depict the local structure of the flow after the singularities in a very short time.

Ecker-Huisken \cite{EH1} studied mean curvature flow of entire graphs, and their normalized flows converge to self-expanders under some additional conditions (see also \cite{Sta}). In lecture 2 of \cite{Il1}, Ilmanen showed the existence of $\textbf{E}$-minimizing self-expanding hypersurfaces (see section 2 for the definition of '$\textbf{E}$-minimizing') which converge to prescribed closed cones at infinity in Euclidean space. For 1-dimensional case, self-expanding curves have been used to study planar networks, see \cite{ScSc,INS} for example. Moreover, self-expanders have acted an important role in studying Lagrangian mean curvature flow partially as the absence of nontrivial self-shrinkers for the flow from a zero-Maslov class Lagrangian in $\C^n$ (see \cite{Ne,NT} for instance).

In the current paper, we develop the theory of the self-expanding solutions of mean curvature flows (i.e., self-expanders) and their relationship to minimal cones.
For studying the existence of self-expanders, we first consider the rotational symmetric solutions to graphic self-expanders. With these solutions as barriers, we give another proof of existence of self-expanders for $C^2$-regular cones, which was proved by Ilmanen \cite{Il1} firstly.
Precisely, there exists an $n$-dimensional minimizing current with the weight $e^{\f{|X|^2}4}$ (and with possible singularities) for a prescribed tangent cone over any $C^2$-embedded hypersurface in $\S^{n}$ at infinity (see Theorem \ref{WeakexistSE}). We study the asymptotic decay estimates at infinity for such self-expanders by investigating their Jacobi field operator at infinity.
Furthermore, for each $C^{3,\a}$-regular mean convex but not minimizing cone, we can rule out the singularities of the $\textbf{E}$-minimizing self-expanding current which converges to such cone on one side at infinity.  More precisely, we have the following theorem (see also Theorem \ref{mainTh}).
\begin{theorem}\label{Intrmcse}
Let $C$ be an $n$-dimensional $C^{3,\a}$-regular mean convex but not minimizing cone pointing into the domain $\Om$ with $\p\Om=C$ in $\R^{n+1}$, then there is a unique smooth complete embedded $\textbf{E}$-minimizing self-expanding hypersurface $M$ in $\Om$ with tangent cone $C$ at infinity, where $M$ has positive mean curvature everywhere.
\end{theorem}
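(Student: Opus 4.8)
The plan is to split the statement into existence, positive mean curvature, global regularity, and uniqueness, taking the weighted minimizing theory of Theorem~\ref{WeakexistSE} as the point of departure. First I would apply Theorem~\ref{WeakexistSE} to the embedded cone $C$ to obtain an $\mathbf{E}$-minimizing self-expanding current $T$ with tangent cone $C$ at infinity. Since $C=\p\Om$ and an $\mathbf{E}$-minimizer cannot cross its own asymptotic cone, an avoidance argument for the associated flow $t\mapsto\sqrt{t}\,\mathrm{spt}(T)$ against the static comparison provided by $C$ confines $M:=\mathrm{spt}(T)$ to $\overline{\Om}$. The interior regularity theory for weighted minimizers (Allard's theorem together with dimension reduction, using that on small scales the Gaussian weight $e^{|X|^2/4}$ is nearly constant, so that blow-ups are area-minimizing minimal cones) then yields that $M$ is a smooth embedded hypersurface away from a closed singular set $\Si$ with $\dim_{\mathcal H}\Si\le n-7$.

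Next I would establish the sign of the mean curvature. Because $C$ is mean convex pointing into $\Om$ but \emph{not} minimizing, $M$ must be a proper smooth hypersurface strictly separated from $C$ on compact subsets: if $M$ coincided with $C$ it would contradict that $C$ is not $\mathbf{E}$-minimizing, and the strong maximum principle forbids interior contact between $M$ and the mean convex barrier $C$. Realizing $M$ as a limit of the mean convex flow generated by $C$ (mean convexity being preserved under the flow) gives $H\ge0$ on the smooth part, where $H=\tfrac12\,X\cdot\nu$ encodes the expander equation $\vec H=\tfrac12 X^{\perp}$. Since $H$ satisfies a linear elliptic (Jacobi-type) equation on the regular part, the strong maximum principle upgrades $H\ge0$ to either $H>0$ everywhere or $H\equiv0$; the latter would force $M$ to be minimal and asymptotic to $C$, hence $M=C$, again contradicting the non-minimizing hypothesis. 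Therefore $H>0$ at every smooth point.

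For full regularity I would run a Hardt--Simon type foliation argument. The scaling family $\{\sqrt{t}M\}_{t>0}$ consists of time-slices of the mean curvature flow $\mathcal M$, so distinct leaves are disjoint by the avoidance principle, and the strict positivity $H>0$ makes the family strictly monotone, sweeping out $\Om$. If $p\in\Si$ were a genuine singular point, a blow-up of $M$ at $p$ would be an area-minimizing minimal cone $C_p$; the disjointness and monotonicity of the nearby leaves $\sqrt{t}M$ would then be incompatible with the strong maximum principle applied to $C_p$ and its dilates, unless $C_p$ is a multiplicity-one hyperplane. By Allard's theorem $p$ is then a regular point, so $\Si=\varnothing$ and $M$ is smooth and complete. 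The convergence $\sqrt{t}M\to C$ in Hausdorff distance as $t\to0$ follows from the asymptotics built into Theorem~\ref{WeakexistSE}.

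Finally, for uniqueness I would use a sliding argument. Given two smooth complete embedded $\mathbf{E}$-minimizing self-expanders $M_1,M_2\subset\Om$ with the same tangent cone $C$ at infinity, I consider the foliation generated by $\{\sqrt{t}M_1\}$ and slide $M_2$ across it; because both are asymptotic to $C$, the graphical decay of each $M_i$ toward the cone (controlled by the existence/uniqueness theory for graphic self-expanders) confines the analysis to compact sets, where a first interior contact between leaves contradicts the strong maximum principle unless $M_1=M_2$. I expect the regularity step to be the main obstacle: the cone is only mean convex and not minimizing, so one must quantify the separation of $M$ from $C$, propagate the one-sided structure to every blow-up so that tangent objects are genuinely minimal cones (disentangling the Gaussian weight from the scaling-invariant blow-up), and verify that the strong maximum principle can be invoked in the weak current/varifold sense to eliminate the codimension-$7$ singular set.
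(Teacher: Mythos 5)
Your outline (weighted minimization, one-sidedness, $H>0$ via the maximum principle, a dilation foliation, sliding uniqueness) is the same family of ideas as the paper's, but two load-bearing steps are genuinely incomplete. The first is confinement to $\overline{\Om}$. You apply Theorem \ref{WeakexistSE} globally and then claim an avoidance argument against ``the static comparison provided by $C$'' traps $\mathrm{spt}T$ in $\overline{\Om}$; but $C$ is static under the flow only when it is minimal, and even then avoidance says nothing about which side of $C$ an unconstrained minimizer lies on --- it may equal $C$, touch it, or lie on the wrong side, and the assertion that an $\mathbf{E}$-minimizer ``cannot cross its own asymptotic cone'' is unsupported (it is essentially what has to be proved). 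The paper instead solves a \emph{constrained} (obstacle) minimization among sets $K\subset\overline{\Om}$ (Theorem \ref{WESEmcC}) and pushes the support off $C$ via the Solomon--White strong maximum principle in the weighted metric combined with a rotation argument at the vertex to handle $0\in\mathrm{spt}T$ (Lemma \ref{smsemc}); the ``not minimizing'' hypothesis enters through Lemma \ref{ammse}/Theorem \ref{equamcse} to exclude $\mathrm{spt}T=C$. Moreover, for the merely mean convex (non-minimal) cone of the theorem, the paper does not work with $C$ directly: it flows the link $\Si$ by mean curvature for short time to strictly mean convex links $\Si_t$, applies Corollary \ref{posiSE} to $C\Si_t$, and recovers $M$ as a limit of the smooth expanders $M_{t_i}$, using the uniform bound $H_{t_i}=\f12\lan X,\nu_{t_i}\ran\ge\ep_6$ to conclude the limit is a radial graph near the origin, hence smooth there. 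This perturbation-and-star-shapedness mechanism, on which the whole mean convex case rests, is absent from your proposal; your substitute (blow-up at a singular $p$ contradicted by ``nearby leaves'') fails as stated, because the leaves $\sqrt{t}M$ with $t\neq1$ lie at positive distance from $p$ and vanish in the blow-up unless one scales $t=1-\la s$ with the blow-up parameter, which is exactly the vertex-translation trick of Lemma \ref{sesem}, and even then one must handle a cylindrical tangent cone with axis along the translation direction via the nearest-point/halfspace argument and Wickramasekera's principle (Lemma \ref{msem}).

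The second gap is quantitative decay, which you delegate to ``the existence/uniqueness theory for graphic self-expanders.'' Theorem \ref{Graphuni} only yields $O(r^{-1})$ decay toward the cone, which is scaling-critical: $M$ and $\sqrt{t}M$ (or $M$ and a second expander $M'$) approach $C$ at the same rate, so in your sliding argument the first contact can escape to infinity and the analysis is \emph{not} confined to compact sets. What actually closes both the foliation argument and uniqueness in the paper is the sharp two-sided asymptotic estimate $\ep_4 r^{-n-1}e^{-r^2/4}\varphi_1(\xi)\le f(\xi r)\le \ep_4^{-1}r^{-n-1}e^{-r^2/4}\varphi_1(\xi)$ of Corollary \ref{updownC} (and, in the strictly mean convex case, the expansion $f=H_\Si/r+O(r^{-3})$ of Lemma \ref{AsmPMC}), proved via the Jacobi-type operator $L_C$ and explicit barriers involving the first eigenfunction of $\De_\Si+|A_\Si|^2$. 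These estimates are what allow the critical sliding parameters ($\de_1$ in Theorem \ref{Minimalfoliation}, $\de_5$ in Theorem \ref{mainTh}) to be shown equal to $1$; without an analogue, your uniqueness and foliation steps do not close. In short: your skeleton is right, but the constrained one-sided existence, the reduction to strictly mean convex links, and the Gaussian-sharp decay estimates are the actual content of the paper's proof, and none of the three is supplied or correctly replaced in your proposal.
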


Thus we give an affirmative answer to Lawson's problem in \cite{Bro} for regular minimal but not minimizing cones. Moreover, let $M$ be the self-expander in Theorem \ref{Intrmcse}, then $\mathcal{M}:\ t\in(0,\infty)\rightarrow\sqrt{t}M$ is a foliation of $\Om$ moving by mean curvature.
In Theorem \ref{Intrmcse}, the assumption on 'not minimizing' can not be removed. In fact, we can show that if a smooth self-expander $M$ converges to any given regular area-minimizing cone at infinity, then $M$ must be this area-minimizing cone (see Lemma \ref{area-minimSE} for details). Without restriction in $\Om$, uniqueness for self-expanders may fail in view of \cite{AIC,Il1}.

Minimal cones can be seen as special self-expanders, and there are some exact correspondences between them. For a minimal cone $C$  and a domain $\Om$ in Euclidean space with $\p\Om=C$, $C$ is stable if and only if $C$ is a stable self-expander (see Theorem \ref{Stablese}); $C$ is area-minimizing in $\overline{\Om}$ if and only if $C$ is an \textbf{E}-minimizing self-expanding hypersurface in $\overline{\Om}$ (see Theorem \ref{equamcse}). Furthermore, there is an alternative phenomena between the existence of minimal hypersurfaces and self-expanding hypersurfaces with respect to the prescribed minimal cones. More precisely, for an regular nonflat minimal cone $C$, either there exists a complete smooth minimal hypersurface on one side of $C$, but there does not exist any complete smooth self-expander which converges to $C$ at infinity (see Lemma \ref{area-minimSE});
or there exists a complete smooth self-expander which converges to $C$ at infinity, but there does not exist any complete smooth minimal hypersurface on one side of $C$
(see \cite{D0}).

\textbf{Conventions and notation.}
We say an $n$-dimensional \emph{mean convex} hypersurface $\G$ (or an $n$-dimensional hypersurface $\G$ has \emph{positive mean curvature}) pointing into $\Om$ in $\R^{n+1}$, if $\G$ is $C^2$-continuous in $\R^{n+1}$ and the mean curvature of $\G$ is nonnegative (or positive) with respect to the unit normal vector pointing into $\Om$. Note that in this circumstance, we omit "with respect to the unit normal vector" for convenience in this whole paper. Here, the mean curvature is defined by taking trace of \eqref{hij} with the unit normal vector $\nu$ pointing into $\Om$.

Without special illumination, we always think that a cone $C$ has vertex at the origin for convenience in this context.
A \emph{$C^{3,\a}$-regular cone} means the cone over a compact, embedded $C^{3,\a}$-hypersurface without boundary in the sphere for some $\a\in(0,1)$.
We say a cone $C$ \emph{mean convex}, which means that $C\setminus\{0\}$ has nonnegative mean curvature. $c$ denotes a positive constant depending only on the dimension $n$ and the cone $C$ but will be allowed to change from line to line.

\section{Preliminaries and notation}

An $n$-dimensional smooth manifold $M$ is said to be a \emph{self-expander} in $\R^{n+m}$ if it satisfies the elliptic equations
\begin{equation}\label{SE}
H= \frac{X^N}{2},
\end{equation}
where $X$ is the position vector of $M$ in $\R^{n+m}$, $(\cdots)^N$ is the projection into the normal bundle of $M$, and $H$ is the mean curvature vector of $M$. The reason of calling is that $\sqrt{t}M$ satisfies the mean curvature flow for $t>0$. In fact, let $M_t=\sqrt{t}M$ and $F_t=\sqrt{t}X=(\sqrt{t}x_1,\cdots,\sqrt{t}x_{m+n})$, then
$$\f{dF_t}{dt}=\f1{2\sqrt{t}}X=\f1{2t}F_t.$$
Hence
\begin{equation}
\left(\f{dF_t}{dt}\right)^N=\f1{2t}F_t^N=H_{M_t},
\end{equation}
where $N$ also denotes the projection into the normal bundle of $M_t$, $H_{M_t}$ is the mean curvature vector of $M_t$. Conversely, if $\mathcal{M}:\ t\in(0,\infty)\rightarrow\sqrt{t}M$ is a mean curvature flow, then obviously $M$ is a self-expander by the above argument.

Let $\na$ and $\De$ be the Levi-Civita connection and Laplacian of self-expander $M$, respectively.
For any $X=(x_1,\cdots,x_{n+m})\in\R^{n+m}$, we have
$$\De X=H,$$
and
\begin{equation}\aligned\label{laplace}
\De|X|^2=2\lan X,\De X\ran+2|\na X|^2=2\lan X,H\ran+2n=|X^N|^2+2n=2n+4|H|^2.
\endaligned
\end{equation}

If $M$ can be written as a graph over $\Om\subset\R^n$ with the graphic function $u$ in $\R^{n+1}$, namely, $M=\{(x,u(x))\in\R^{n+1}|\ x\in\Om\}$, then \eqref{SE} implies
\begin{equation}\aligned\label{Graphv}
\div\left(\f{Du}{\sqrt{1+|Du|^2}}\right)=\f{-x_iu_i+u}{2\sqrt{1+|Du|^2}},
\endaligned
\end{equation}
where 'div' is the divergence on $\R^n$. Moreover, \eqref{Graphv} is equivalent to
\begin{equation}\aligned\label{Graphu}
g^{ij}u_{ij}=\f{-x_iu_i+u}2,
\endaligned
\end{equation}
where $(g^{ij})$ is the inverse matrix of $(g_{ij})$ with $g_{ij}=\de_{ij}+u_iu_j$.

Let $\la$ be a function on $\R^+$ satisfying
$$re^{\f{r^2}{4n}}=\la\left(\int_0^re^{\f{t^2}{4n}}dt\right).$$
Then the weighted space $\left(\R^{n+m},e^{\f{|X|^2}{2n}}\sum_{i=1}^{n+m}dx_i^2\right)$ can be written in a polar coordinate as $N\triangleq(\R^{n+m},\mathbf{g})$ with
\begin{equation*}\aligned
\mathbf{g}=&e^{\f{r^2}{2n}}\left(dr^2+r^2\si_{_{\S^{n+m-1}}}\right)=\left(d\int_0^re^{\f{t^2}{4n}}dt\right)^2+\la^2\left(\int_0^re^{\f{t^2}{4n}}dt\right)
\si_{_{\S^{n+m-1}}}\\
=&d\r^2+\la^2(\r)\si_{_{\S^{n+m-1}}},
\endaligned
\end{equation*}
where $\si_{_{\S^{n+m-1}}}$ is the standard metric of $(n+m-1)$-dimensional unit sphere in $\R^{n+m}$.
In particular, $N$ has non-positive sectional curvature by \cite{Lip} or (3.30) in \cite{DJX}. Each $n$-dimensional self-expander in $\R^{n+m}$ is equivalent to an $n$-dimensional minimal submanifold in $N$.

By Rademacher's theorem, a locally $n$-rectifiable set has tangent spaces at almost every point. Let $G(n,p)$ be the space of (unoriented) $n$-planes through the origin in $\R^p$.
A $n(n<p)$-rectifiable varifold in $\R^p$ corresponds to an $n$-varifold defined by Radon measure on $G(n,p)$ as Chapter 38 in \cite{S}.
We call a locally $n$-rectifiable varifold $S$ in $\R^p$ \emph{an $n$-varifold self-expander} if 
for any $C^1$-vector field $Y$ with compact support in spt$S$, we have
\begin{equation}\aligned\label{WeakSE}
\int_{\R^p\times G(n,p)}\overline{\div}_\omega YdS(X,\omega)=-\f12\int_{\R^p\times G(n,p)}\lan X^N,Y\ran dS(X,\omega),
\endaligned
\end{equation}
where $\overline{\div}$ represents the divergence on $\R^p$, and $\omega$ is an $n$-dimensional tangent plane of $S$ at the considered point if it exists.
Then $\f12X^N$ is the generalized mean curvature of $S$ (see \cite{LY}\cite{S} for its definition). \emph{An integer $n$-varifold self-expander} is an abbreviation of an integer multiplicity $n$-varifold self-expander. Sometimes, we omit '$n$' when we don't emphasize the dimension of '$n$-varifold self-expander'.

From monotonicity identity (formula 17.3 in \cite{S}), we have
\begin{equation}\aligned\label{monSEr-nBr}
\f{d}{d\r}\left(\r^{-n}\mathcal{H}^n(S\cap B_\r)\right)=\f{d}{d\r}\int_{S\cap B_\r}\f{|X^N|^2}{|X|^{n+2}}d\mu_S+\f12\r^{-n-1}\int_{S\cap B_\r}\left|X^N\right|^2d\mu_S,
\endaligned
\end{equation}
where $B_\r$ denotes the ball in $\R^{n+1}$ with radius $\r$ and centered at the origin, $\mathcal{H}^n(K)$ denotes the $n$-dimensional Hausdorff measure of any set $K\subset\R^{n+1}$, and $d\mu_S$ is the Radon measure corresponding to the varifold $S$.

Now let us recall some classical definitions of currents (see \cite{LY}\cite{S} for example) and define a weighted mass for rectifiable currents.
Let $U$ be an open subset of $\R^p$ for $p>n$, $I_{n,p}=\{\a=(i_1,\cdots,i_n)\in\Z_+^n|\ 1\le i_1<\cdots<i_n\le p\}$. Denote $E^n(U)$ be the set including all smooth $n$-forms $\omega=\sum_{\a\in I_{n,p}}a_\a dx^\a$, where $a_\a\in C^\infty(U)$ and $dx^\a=dx^{i_1}\wedge\cdots\wedge dx^{i_n}$ if $\a=(i_1,\cdots,i_n)\in I_{n,p}$. Let $\mathcal{D}^n(U)$ denote the set of $\omega=\sum_{\a\in I_{n,p}}a_\a dx^\a\in E^n(U)$ such that each $a_\a$ has compact support in $U$. For any $\omega\in\mathcal{D}^n(U)$ one denotes a norm $|\cdot|_U$ by
$$|\omega|_U=\sup_{x\in U}\lan\omega(x),\omega(x)\ran^{\f12}.$$
Denote $\mathcal{D}_n(U)$ be the set of $n$-currents in $U$, which are continuous linear functionals on $\mathcal{D}^n(U)$.
For each $T\in\mathcal{D}_n(U)$ and each open set $W$ in $U$, one defines
$$\mathbf{M}_W(T)=\sup_{|\omega|_U\le1,\omega\in\mathcal{D}^n(U),\mathrm{spt}\omega\subset W}T(\omega),$$
and the mass of $T$ by
$$\mathbf{M}(T)=\sup_{|\omega|_U\le1,\omega\in\mathcal{D}^n(U)}T(\omega).$$

If $\mathbf{M}_W(T)<\infty$ for $T\in\mathcal{D}_n(U)$ and $W\Subset U$, then by the Riesz Representation, there is a Radon measure $\mu_T$ on $U$ and $\mu_T$-measurable vector-valued function $\overrightarrow{T}$ with values in the spaces of $n$-vectors $\La_n(\R^p)$, $|\overrightarrow{T}|=1$ $\mu_T$-a.e., such that
$$T(\omega)=\int\lan\omega(x),\overrightarrow{T}(x)\ran d\mu_T(x).$$
Let $X$ be the position vector in $\R^p$.
We define a new mass in $W$ with the weight $e^{\f{|X|^2}4}$ by
$$\textbf{E}_W(T)=\sup_{|\omega|_U\le1,\omega\in\mathcal{D}^n(U),\mathrm{spt}\omega\subset W}\int\lan\omega(X),\overrightarrow{T}(X)\ran e^{\f{|X|^2}4}d\mu_T(X).$$

We call $T$ \emph{an \textbf{E}-minimizing self-expanding current} in $W\subset U$, if $T$ is an integer multiplicity locally rectifiable current in a subset $W$ of $U$, and $\textbf{E}_K(T)\le\textbf{E}_K(T')$
whenever $K\Subset U$, $\p T'=\p T$, and spt$(T'-T)$ is a compact subset of $K\cap W$. And we call $\mathrm{spt}T$ \emph{an \textbf{E}-minimizing self-expanding hypersurface(curve)} in $W$ if $p=n+1(n=1)$.

Let $E$ be a Lebesgue measurable set in $\R^{p}$, we denote
$$s\left(E+\xi\right)=\{s(X+\xi)|\ X\in E\}$$
for any $s>0$ and $\xi\in\R^p$, and
$\llbracket E\rrbracket$ be the multiplicity one current associated with $E$. For an $n$-dimensional integral current $T$ in $\R^p$, we denote $|T|$ be the integral varifold associated with $T$. Let $\e_{\xi,s}$ be a translation plus homothety given by
$$\e_{\xi,s}(X)=s^{-1}(X-\xi).$$
Let $S$ be a varifold or current in $\R^{p}$, we define $s^{-1}\left(S-\xi\right)=\e_{\xi,s\sharp} S$ in this text for convenience, where the definition of $\e_{\xi,s\sharp}$ is
the same as in \cite{HaS}(or \cite{S}). We will use another way to describe the multiplicity of $S$.

\section{Geometry of self-expanders}

\subsection{\textbf{E}-minimizing graphic self-expanding hypersurfaces}
Every self-expander with codimension 1 is an \textbf{E}-minimizing self-expanding hypersurface in $\overline{\Om}\times\R$ if it is a graph over a domain $\Om\subset\R^n$.
\begin{lemma}\label{comp}
Let $\Om$ be a bounded domain in $\R^n$ and $M$ be a graphic self-expander in
$\overline{\Om}\times\R$ with the graphic function $u$ and with the volume element $d\mu_M$. For any smooth compact hypersurface $W\subset\overline{\Om}\times\R$ with $\p M=\p W$ and the volume element $d\mu_W$, one has
\begin{equation}\aligned
\int_{M}e^{\f{|X|^2}4}d\mu_M\le\int_{W}e^{\f{|X|^2}4}d\mu_{W},
\endaligned
\end{equation}
where the above inequality attains equality if and only if $W=M$.
\end{lemma}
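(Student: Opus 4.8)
The plan is to prove the lemma by a calibration argument adapted to the graph $M$, exploiting that the weighted functional $\int e^{\f{|X|^2}4}d\mu$ is exactly the area functional of the conformally flat metric $\bar g$ in which self-expanders are minimal. Writing $z$ for the vertical coordinate on the cylinder $\overline{\Om}\times\R$, I would introduce the vector field
$$Y(x,z)=e^{\f{|x|^2+z^2}4}\,\f{(Du(x),-1)}{\sqrt{1+|Du(x)|^2}},$$
which is independent of $z$ and has Euclidean length $|Y|=e^{\f{|X|^2}4}$ at every point. On $M$ the field $Y$ is precisely $e^{\f{|X|^2}4}$ times the (downward) unit normal, so the flux of $Y$ through $M$ reproduces $\int_M e^{\f{|X|^2}4}d\mu_M$, while for any competitor $W$ with unit normal $\nu_W$ the pointwise Cauchy--Schwarz bound $\lan Y,\nu_W\ran\le|Y|=e^{\f{|X|^2}4}$ controls the flux through $W$ by $\int_W e^{\f{|X|^2}4}d\mu_W$.

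The heart of the argument is the identity
$$\div Y=e^{\f{|x|^2+z^2}4}\,\f{u(x)-z}{2\sqrt{1+|Du|^2}},$$
which I would obtain by differentiating $Y$ and substituting the graphic self-expander equation \eqref{Graphv} for $\div\big(\f{Du}{\sqrt{1+|Du|^2}}\big)$: the first-order terms generated by the weight cancel exactly against the $x_iu_i$ term, leaving only $u(x)-z$. This quantity vanishes precisely on $M$ (where $z=u(x)$) and has a definite sign on each side, being nonnegative below the graph and nonpositive above it. Thus $Y$ is \emph{not} divergence-free off $M$, so this is not a textbook closed-form calibration; what rescues the argument is exactly this sign of $\div Y$ relative to $M$.

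I would then let $\Om^*\subset\overline{\Om}\times\R$ be the region enclosed between $M$ and $W$ (so $\p\Om^*=W-M$) and apply the divergence theorem to $Y$ on $\Om^*$. The boundary contribution from $M$ equals $\int_M e^{\f{|X|^2}4}d\mu_M$; the contribution from $W$ is bounded by $\int_W e^{\f{|X|^2}4}d\mu_W$ via the Cauchy--Schwarz bound above; and because the orientation of $\Om^*$ flips according to whether $W$ lies above or below $M$, the sign of $\div Y$ forces the interior integral $\int_{\Om^*}\div Y$ to enter with the favorable sign. Combining these gives $\int_M e^{\f{|X|^2}4}d\mu_M\le\int_W e^{\f{|X|^2}4}d\mu_W$. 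For the equality case both estimates must be saturated simultaneously: Cauchy--Schwarz equality forces $T_pW=T_pM$ wherever the two differ, and $\int_{\Om^*}\div Y=0$ forces $\Om^*$ to have measure zero since $\div Y$ is nonzero off $M$; together these yield $W=M$.

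The step I expect to be the main obstacle is making the sign bookkeeping of $\int_{\Om^*}\div Y$ rigorous for a genuinely arbitrary (non-graphical, possibly folded) $W$, since then $\Om^*$ carries an integer multiplicity that need not be monotone in $z$ and the bare flux estimate alone can fail to dominate $\int_M e^{\f{|X|^2}4}d\mu_M$. I would resolve this by upgrading $Y$ to a genuinely closed calibration: replacing the constant-in-$z$ unit normal by the normal field of a foliation of the cylinder by self-expander graphs, ordered through the comparison principle for \eqref{Graphv}, makes $\div Y\equiv0$ and removes the interior term entirely, so that $\int_W e^{\f{|X|^2}4}d\mu_W\ge\int_W\lan Y,\nu_W\ran d\mu_W=\int_M e^{\f{|X|^2}4}d\mu_M$ follows for \emph{every} competitor using only the comass bound, with equality again exactly when the tangent planes coincide, i.e. $W=M$.
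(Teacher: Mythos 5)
Your main argument is exactly the paper's proof: the same vertically translated vector field $Y=e^{\f{|X|^2}4}\nu_M$ (up to the sign convention for the normal), the same divergence identity $\overline{\div}Y=e^{\f{|X|^2}4}\f{u-x_{n+1}}{2\sqrt{1+|Du|^2}}$ obtained by substituting \eqref{Graphv}, the same splitting of the enclosed region into the parts above and below the graph where $\overline{\div}Y$ has the favorable sign, and the same Cauchy--Schwarz plus vanishing-volume analysis of the equality case. (One small slip: your $Y$ is not independent of $z$ --- only its direction is; your own divergence formula uses the $z$-dependence of the weight $e^{\f{|x|^2+z^2}4}$.)

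Two remarks on your final paragraph. First, the ``folded $W$'' worry lies outside the lemma's hypotheses: $W$ is an embedded smooth hypersurface with $\p W=\p M$ in the cylinder $\overline{\Om}\times\R$, hence homologous to the graph $M$, so the enclosed region carries multiplicity $0$ or $\pm1$ and decomposes into $U^\pm$ exactly as in the paper's proof; no monotonicity in $z$ is required. Second, your proposed repair --- upgrading $Y$ to a closed calibration coming from a foliation by self-expander graphs --- is not free: it presupposes solvability of the Dirichlet problem for \eqref{Graphu} on the \emph{arbitrary} bounded domain $\Om$ with the shifted data $u|_{\p\Om}+c$ (for mean-curvature-type operators this can fail when $\p\Om$ is not mean convex), and it further requires showing that the leaves actually sweep out all of $\overline{\Om}\times\R$; the elementary barriers give only $u\le u_c\le u+c$, which does not by itself force the leaves to exhaust the cylinder. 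Neither point is needed for the lemma as stated, so your first argument already suffices and coincides with the paper's.
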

\begin{proof}
Let $U$ be the domain in $N$ enclosed by $M$ and $W$. Without loss of generality, we assume that $\overline{M}\cap \overline{W}=\p M$.
Let $Y$ be a vector field on $M$ defined by
$$Y=-\sum_{i=1}^n\f{u_i}{\sqrt{1+|Du|^2}}e^{\f{\sum_{i=1}^{n+1}x_i^2}4}E_i+\f{1}{\sqrt{1+|Du|^2}}e^{\f{\sum_{i=1}^{n+1}x_i^2}4}E_{n+1}.$$
Viewing $u_i$ and $|Du|$ as functions on $\Om$ and translating $Y$ to $W$ along the $E_{n+1}$ ($x_{n+1}$ axis) direction, we obtain a vector field on $U$, denoted by $Y$, as well.
From (\ref{Graphv}) we have
\begin{equation*}\aligned
\overline{\div}(Y)=&-\sum_{i=1}^n\left(\p_{x_i}\left(\f{u_i}{\sqrt{1+|Du|^2}}\right)+\f{x_iu_i}{2\sqrt{1+|Du|^2}}\right)e^{\f{|X|^2}4}+\f{x_{n+1}}{2\sqrt{1+|Du|^2}}e^{\f{|X|^2}4}\\
=&\f{x_{n+1}-u}{2\sqrt{1+|Du|^2}}e^{\f{|X|^2}4},
\endaligned
\end{equation*}
where $\overline{\div}$ stands for the divergence on $\R^{n+1}$, and $|X|^2=\sum_{i=1}^{n+1}x_i^2$.
Let $\nu_M,\nu_W$ be the unit normal vectors of $M,W$ respectively, such that $Y|_M=e^{\f{|X|^2}4}\nu_M$.
If $W$ is above $M$, by Stokes' theorem, up to a sign of $\nu_W$ we get
\begin{equation}\aligned\label{Gaussf}
0\le\int_{U}\f{x_{n+1}-u}{2\sqrt{1+|Du|^2}}e^{\f{|X|^2}4}=&\int_{U}\overline{\div}(Y)=-\int_{M}\lan Y,\nu_M\ran d\mu_M+\int_{W}\lan Y,\nu_W\ran d\mu_{W}\\
\le&-\int_{M}e^{\f{|X|^2}4} d\mu_M+\int_{W}e^{\f{|X|^2}4}d\mu_{W},
\endaligned
\end{equation}
where equality holds if and only if $M=W$.
It is similar for the case that $W$ is below $M$, and we complete the proof.
\end{proof}

\subsection{Bochner type formula for second fundamental form}
For a hypersurface $M$ in $\R^{n+1}$, we choose a local orthonormal frame field $\{e_1,\cdots, e_n\}$ of $M$ at any considered point and let $\overline{\na}$ be the Levi-Civita connection of $\R^{n+1}$. Let $A$ be the second fundamental form and $A_{e_i e_j}=h(e_i,e_j)\nu=h_{ij}\nu$. Here, $\nu$ is the unit normal vector of $M$ in $\R^{n+1}$. Then the coefficients of the second fundamental form $h_{ij}$ are a symmetric $2-$tensor on $M$ and
\begin{equation}\aligned\label{hij}
h_{ij}=\lan\overline{\na}_{e_i}e_j,\nu\ran.
\endaligned
\end{equation}
Denote mean curvature $H=\sum_ih_{ii}$ and the square of the second fundamental form $|A|^2=\sum_{i,j}h_{ij}^2$.
\begin{lemma}\label{LhijB}
Let $M$ be a self-expander in $\R^{n+1}$. Then
\begin{equation}\aligned\label{Lh}
\De h_{ij}+\f12\lan X,\na h_{ij}\ran+\left(\f12+|A|^2\right)h_{ij}=0.
\endaligned
\end{equation}
\end{lemma}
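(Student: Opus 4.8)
The plan is to combine the classical Simons identity for hypersurfaces in $\R^{n+1}$ with the self-expander equation \eqref{SE}. First I would record Simons' identity in its flat-ambient form: for any hypersurface $M\subset\R^{n+1}$, using the Codazzi equations (which in $\R^{n+1}$ make $\na_kh_{ij}$ fully symmetric in all three indices) together with the Gauss equation, one has
\begin{equation*}
\De h_{ij}=\na_i\na_jH+Hh_{ik}h_{kj}-|A|^2h_{ij}.
\end{equation*}
This reduces the lemma to computing the Hessian $\na_i\na_jH$ of the mean curvature on a self-expander.

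For a hypersurface the defining equation \eqref{SE} reads $H=\f12\lan X,\nu\ran$. Writing $f=\lan X,\nu\ran$, so that $H=f/2$, I would differentiate using the two structural facts $\bn_{e_i}X=e_i$ (the ambient covariant derivative of the position vector is the identity) and the Weingarten relation $\bn_{e_i}\nu=-h_{ik}e_k$. A direct computation gives $\na_if=-h_{ik}\lan X,e_k\ran$. Differentiating once more, and using $\na_j\lan X,e_k\ran=\de_{jk}+fh_{jk}$ --- an identity obtained by splitting $X$ into its tangential and normal parts and matching components of $\bn_{e_j}X=e_j$ --- I obtain
\begin{equation*}
\na_j\na_if=-(\na_jh_{ik})\lan X,e_k\ran-h_{ij}-fh_{ik}h_{jk}.
\end{equation*}

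The key maneuver is to invoke Codazzi once more to replace $\na_jh_{ik}$ by $\na_kh_{ij}$, so that $(\na_jh_{ik})\lan X,e_k\ran=\lan X,\na h_{ij}\ran$ (the tangency of $\na h_{ij}$ letting me discard the normal part of $X$). Recalling $f/2=H$, this gives
\begin{equation*}
\na_i\na_jH=-\f12\lan X,\na h_{ij}\ran-\f12h_{ij}-Hh_{ik}h_{jk}.
\end{equation*}
Feeding this back into Simons' identity, the term $Hh_{ik}h_{kj}$ coming from the Gauss/curvature terms and the term $-Hh_{ik}h_{jk}$ coming from the Hessian cancel exactly, and collecting what remains yields precisely \eqref{Lh}.

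I expect the only genuine obstacle to be the bookkeeping in the middle step: computing the Hessian of $\lan X,\nu\ran$ correctly requires careful tracking of the tangential/normal decomposition of $X$, the correct signs in the Weingarten map, and two distinct applications of the Codazzi equations. Once the Hessian is in hand the substitution into Simons' identity is routine and the crucial cancellation is automatic.
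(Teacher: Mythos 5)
Your proof is correct and takes essentially the same route as the paper: the paper also derives Simons' identity $\De h_{ij}=\na_i\na_jH+Hh_{ik}h_{kj}-|A|^2h_{ij}$ (via the Ricci identity and the Gauss equation) and then computes the Hessian of $\lan X,\nu\ran$ using the Weingarten relation and Codazzi, obtaining the same cancellation of the cubic terms $\pm Hh_{ik}h_{jk}$. The only difference is presentational: the paper tracks the frame and connection terms explicitly in the Hessian computation, while you use the tangential/normal splitting $X=X^T+\lan X,\nu\ran\nu$, which is the same calculation in cleaner notation.
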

\begin{proof}
By Ricci identity, one has
\begin{equation}\aligned
\De h_{ij}=h_{ijkk}=h_{ikjk}=h_{ikkj}+h_{il}R_{klkj}+h_{kl}R_{ilkj}.
\endaligned
\end{equation}
Combining Gauss formula $R_{ijkl}=h_{ik}h_{jl}-h_{il}h_{jk}$ and \eqref{SE}, we have
\begin{equation}\aligned\label{Deh}
\De h_{ij}=&H_{ij}+h_{il}(h_{kk}h_{lj}-h_{kj}h_{kl})+h_{kl}(h_{ik}h_{lj}-h_{ij}h_{kl})\\
=&\f12(\lan X,\nu\ran)_{ij}+Hh_{ik}h_{jk}-|A|^2h_{ij}.
\endaligned
\end{equation}
Since
\begin{equation}\aligned
\na_{e_i}(h(e_j,e_k))=&(\na_{e_i}h)(e_j,e_k)+h(\na_{e_i}e_j,e_k)+h(e_j,\na_{e_i}e_k)\\
=&h_{jki}+\lan\na_{e_i}e_j,e_l\ran h_{kl}+\lan\na_{e_i}e_k,e_l\ran h_{jl},
\endaligned
\end{equation}
then
\begin{equation}\aligned\label{dH}
(\lan X,\nu\ran)_{ij}=&\na_{e_i}\na_{e_j}\lan X,\nu\ran-\na_{\na_{e_i}{e_j}}\lan X,\nu\ran\\
=&-\na_{e_i}(\lan X,e_k\ran h_{jk})-\lan\na_{e_i}e_j,e_k\ran\lan X,\overline{\na}_{e_k}\nu\ran\\
=&-h_{ij}-\lan X,\overline{\na}_{e_i}e_k\ran h_{jk}-\lan X,e_k\ran\big(h_{jki}+\lan\na_{e_i}e_j,e_l\ran h_{kl}\\&+\lan\na_{e_i}e_k,e_l\ran h_{jl}\big)+\lan\na_{e_i}e_j,e_k\ran\lan X,{e_l}\ran h_{kl}\\
=&-h_{ij}-\lan X,\nu\ran h_{ik}h_{jk}-\lan X,\na_{e_i}e_k\ran h_{jk}-\lan X,e_k\ran h_{ijk}\\
&-\lan X,e_k\ran\lan\na_{e_i}e_k,e_l\ran h_{jl}\\
=&-h_{ij}-\lan X,\nu\ran h_{ik}h_{jk}-\lan X,e_l\ran\lan e_l,\na_{e_i}e_k\ran h_{jk}-\lan X,\na h_{ij}\ran\\
&+\lan X,e_k\ran\lan e_k,\na_{e_i}e_l\ran h_{jl}\\
=&-h_{ij}-2H h_{ik}h_{jk}-\lan X,\na h_{ij}\ran.
\endaligned
\end{equation}
Combining \eqref{Deh} and \eqref{dH}, we complete the Lemma.
\end{proof}
Remark. Lemma \ref{Lh} can also follow directly from the evolution equation for $h_{ij}$ under the mean curvature flow $\sqrt{t}M$, as the reviewer pointed out.

Taking the trace of \eqref{Lh}, we obtain
\begin{equation}\aligned\label{meancurvature}
\De H+\f12\lan X,\na H\ran+\left(\f12+|A|^2\right)H=0.
\endaligned
\end{equation}
Analog to \eqref{dH}, we have
\begin{equation}\aligned
(\lan E_{n+1},\nu\ran)_{ij}=&\na_{e_i}\na_{e_j}\lan E_{n+1},\nu\ran-\na_{\na_{e_i}{e_j}}\lan E_{n+1},\nu\ran\\
=&-\na_{e_i}(\lan E_{n+1},e_k\ran h_{jk})-\lan\na_{e_i}e_j,e_k\ran\lan E_{n+1},\overline{\na}_{e_k}\nu\ran\\
=&-\lan E_{n+1},\overline{\na}_{e_i}e_k\ran h_{jk}-\lan E_{n+1},e_k\ran\big(h_{jki}+\lan\na_{e_i}e_j,e_l\ran h_{kl}\\&+\lan\na_{e_i}e_k,e_l\ran h_{jl}\big)+\lan\na_{e_i}e_j,e_k\ran\lan E_{n+1},{e_l}\ran h_{kl}\\
=&-\lan E_{n+1},\nu\ran h_{ik}h_{jk}-\lan E_{n+1},\na_{e_i}e_k\ran h_{jk}-\lan E_{n+1},e_k\ran h_{ijk}\\
&-\lan E_{n+1},e_k\ran\lan\na_{e_i}e_k,e_l\ran h_{jl}\\
=&-\lan E_{n+1},\nu\ran h_{ik}h_{jk}-\lan E_{n+1},e_l\ran\lan e_l,\na_{e_i}e_k\ran h_{jk}-\lan E_{n+1},\na h_{ij}\ran\\
&+\lan E_{n+1},e_k\ran\lan e_k,\na_{e_i}e_l\ran h_{jl}\\
=&-\lan E_{n+1},\nu\ran h_{ik}h_{jk}-\lan E_{n+1},\na h_{ij}\ran.
\endaligned
\end{equation}
Hence
\begin{equation}\aligned
\De\lan E_{n+1},\nu\ran=&-|A|^2\lan E_{n+1},\nu\ran -\lan E_{n+1},\na H\ran\\
=&-|A|^2\lan E_{n+1},\nu\ran -\f12\lan E_{n+1},e_i\ran\lan X,\na_{e_i}\nu\ran\\
=&-|A|^2\lan E_{n+1},\nu\ran -\f12\lan X,\na\lan E_{n+1},\nu\ran\ran.
\endaligned
\end{equation}
When $M$ is a graph with a graphic function $u$, we define $w=-\log \lan E_{n+1},\nu\ran=\f12\log(1+|Du|^2)$. Then
\begin{equation}\aligned\label{Dewgra}
\De w=|A|^2-\f12\lan X,\na w\ran+|\na w|^2.
\endaligned
\end{equation}

\subsection{Stable self-expanders}
For a $C^2$-hypersurface $M\subset\R^{n+1}$, we define a family of hypersurfaces $M_s$ by $X(\cdot,s):\ M\rightarrow\R^{n+1}$ with $X(p,s)=p+sf(p)\nu(p)$, where $\nu$ is the unit normal vector of $M$, and $f\in C^\infty_c(M)$. Let $\nu(p,s)$ denote the unit normal to $M_s$ at the point $X(p,s)$ and $H(p,s)$ be the mean curvature of $M_s$ at the point $X(p,s)$.
So we get
\begin{equation}\aligned
\f{\p}{\p s}\int_{M_s}e^{\f{|X|^2}4}d\mu=\int_{M_s}\left(-H+\f{\lan X,\nu\ran}2\right)fe^{\f{|X|^2}4}d\mu.
\endaligned
\end{equation}
Hence self-expanders are critical points of the area-functional with the weight $e^{\f{|X|^2}4}$.
By standard calculations on $H$ and $\nu$, we have (see Theorem 3.2 in \cite{HP} for example)
\begin{equation}\aligned\label{Hnusp0}
\f{\p H}{\p s}(p,0)&=\De_M f(p)+|A|^2(p)f(p),\\
\f{\p \nu}{\p s}(p,0)&=-\na_M f(p).
\endaligned
\end{equation}
Then we get
\begin{equation}\aligned
\f{\p^2}{\p s^2}\bigg|_{s=0}\int_{M_s}e^{\f{|X|^2}4}d\mu=\int_{M}\left(-\De_M f-|A|^2f-\f{\lan X,\na_M f\ran}2+\f f2\right)fe^{\f{|X|^2}4}d\mu.
\endaligned
\end{equation}
We say that a smooth self-expander $M$ is \emph{stable} if
\begin{equation}\aligned\label{SE2var}
\int_{M}\left(|\na_M f|^2-|A|^2f^2+\f12f^2\right)e^{\f{|X|^2}4}d\mu\ge0.
\endaligned
\end{equation}
for any $f\in C^\infty_c(M)$.

We say an $n$-dimensional minimal cone $C$ in $\R^{n+1}(n\ge2)$ \emph{stable}, if the second variation of area functional on $C\setminus\{0\}$ is nonnegative. We say $C$ a \emph{stable self-expander} if $C\setminus\{0\}$ is a stable self-expander.
\begin{theorem}\label{Stablese}
Any minimal cone with an isolated singularity is stable if and only if it is a stable self-expander.
\end{theorem}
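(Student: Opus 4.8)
The plan is to exploit the fact that a minimal cone $C$ is itself a self-expander: since $C$ is a union of rays through the origin, the position vector $X$ is tangent to $C\setminus\{0\}$, so $X^N=0$ and the self-expander equation $H=X^N/2$ reduces to $H=0$, i.e. minimality. Thus $M:=C\setminus\{0\}$ carries both relevant quadratic forms, the area second-variation form $\mathcal{A}(f)=\int_M(|\nabla f|^2-|A|^2f^2)\,d\mu$ and the self-expander form from \eqref{SE2var}, namely $\mathcal{Q}(f)=\int_M(|\nabla f|^2-|A|^2f^2+\tfrac12 f^2)e^{|X|^2/4}\,d\mu$, both tested over $f\in C_c^\infty(C\setminus\{0\})$. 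I will relate the two by a single weight-absorbing substitution, which settles one implication at once, and then use the scaling symmetry of the cone to recover the other.

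First I would substitute $f=e^{-|X|^2/8}g$, a bijection of $C_c^\infty(C\setminus\{0\})$ onto itself. Writing $\rho=|X|^2/8$, so that $e^{|X|^2/4}=e^{2\rho}$ and $e^{|X|^2/4}|\nabla f|^2=|\nabla g|^2-\langle\nabla(g^2),\nabla\rho\rangle+g^2|\nabla\rho|^2$, the weight cancels termwise. Integrating the cross term by parts and using the two minimal-cone identities $\Delta_C|X|^2=2\langle X,H\rangle+2n=2n$ (so $\Delta_C\rho=n/4$) and $|\nabla\rho|^2=|X|^2/16$, I expect to arrive at
\begin{equation*}
\mathcal{Q}(f)=\mathcal{A}(g)+\int_M\left(\frac{|X|^2}{16}+\frac n4+\frac12\right)g^2\,d\mu .
\end{equation*}
Since the bracketed potential is strictly positive, this formula shows immediately that area-stability ($\mathcal{A}\ge0$) forces $\mathcal{Q}\ge0$, i.e. a stable minimal cone is a stable self-expander.

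For the converse I would use that the cone is scale-invariant. Given $g\in C_c^\infty(C\setminus\{0\})$, set $g_\lambda(X)=g(X/\lambda)$, whose support remains in $C\setminus\{0\}$. Because $d\mu$ is $n$-dimensional and $|A|^2$ scales like $|X|^{-2}$ on a cone, one computes $\mathcal{A}(g_\lambda)=\lambda^{n-2}\mathcal{A}(g)$, $\int_M g_\lambda^2=\lambda^n\int_M g^2$, and $\int_M |X|^2 g_\lambda^2=\lambda^{n+2}\int_M|X|^2 g^2$. Feeding $g_\lambda$ into the identity above, the self-expander stability $\mathcal{Q}\ge0$ reads, after dividing by $\lambda^{n-2}$,
\begin{equation*}
\mathcal{A}(g)+\frac{1}{16}\lambda^4\int_M|X|^2g^2\,d\mu+\left(\frac n4+\frac12\right)\lambda^2\int_M g^2\,d\mu\ge0 .
\end{equation*}
Letting $\lambda\to0$ kills the two lower-order positive terms and yields $\mathcal{A}(g)\ge0$ for every $g$, i.e. area-stability of the cone.

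The only genuine obstacle is bookkeeping rather than conceptual: one must verify that both the substitution $f\mapsto e^{|X|^2/8}f$ and the dilation $g\mapsto g_\lambda$ preserve the admissible test class (smooth, compactly supported away from the vertex), and that the integration by parts produces no boundary contribution — both hold because the supports avoid the origin. The crucial structural input making the two forms equivalent \emph{despite} the extra positive potential is precisely the scaling homogeneity of the cone, which lets the limit $\lambda\to0$ discard the non-scale-invariant terms; this is where the hypothesis that $C$ is a cone, rather than a general self-expander, is essential.
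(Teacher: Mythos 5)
Your proposal is correct, and its two halves relate differently to the paper. The forward direction is in substance the paper's own trick: the paper separates variables $f=\varphi_1(\xi)\eta(t)$, reduces to a radial functional $I_0$, and substitutes $\eta\mapsto\eta e^{t^2/8}$ before invoking the classical criterion $\la_1\ge-\f{(n-2)^2}4$ for stable cones; your substitution $f=e^{-|X|^2/8}g$ is the same change of variables carried out globally on $C\setminus\{0\}$. Your identity
\begin{equation*}
\mathcal{Q}\left(e^{-|X|^2/8}g\right)=\mathcal{A}(g)+\int_{C}\left(\f{|X|^2}{16}+\f n4+\f12\right)g^2\,d\mu
\end{equation*}
checks out: on the cone $X^T=X$, so $\na\r=\f14 X$ and $|\na\r|^2=\f{|X|^2}{16}$, while $\De_C|X|^2=2n$ since $H=0$, and compact support away from the vertex removes all boundary terms. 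A genuine advantage of doing it globally is that you never need the eigenmode decomposition: the paper's displayed computation only treats the first mode $\varphi_1\e$ and delegates general $f$ to the classical theory, whereas your identity covers arbitrary test functions at once. Where you truly diverge from the paper is the converse. The paper argues the contrapositive by constructing explicit cut-off test functions with profile $t^{\ep+1-\f n2}e^{-t^2/4}$ and evaluating $I_0$ via Gamma-function asymptotics ($\G(\ep)\rightarrow\infty$ as $\ep\rightarrow0^+$); your dilation argument $g_\la(X)=g(X/\la)$ instead uses the exact homogeneities $\mathcal{A}(g_\la)=\la^{n-2}\mathcal{A}(g)$, $\int g_\la^2=\la^n\int g^2$, $\int|X|^2g_\la^2=\la^{n+2}\int|X|^2g^2$ (all correct, and dilations preserve the test class $C_c^\infty(C\setminus\{0\})$), so that dividing by $\la^{n-2}$ and letting $\la\rightarrow0$ discards the non-scale-invariant potential terms and yields $\mathcal{A}\ge0$ directly. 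This buys you three things: no spectral theory or special functions; uniformity in dimension (the paper restricts to $n\ge3$ after disposing of equators); and immunity to the borderline case — the paper's Gamma computation produces a negative limit only when $\la_1+\f{(n-2)^2}4$ is \emph{strictly} negative (at equality the coefficient of $\G(\ep)$ is $O(\ep)$ and the limit works out positive, namely $\f{n+4}2$ as $\ep\rightarrow0$), so its use of "$\le$" is a small slip that your blow-up argument never meets, since at equality the cone is stable and your forward direction applies. What the paper's route buys instead is the explicit radial operator and mode-by-mode calculus, which it reuses later in the asymptotic decay analysis; your proof is cleaner and self-contained but yields no such quantitative byproduct.
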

\begin{proof}
Let $\Si$ be a minimal hypersurface in $\S^n$ with the second fundament form $A_\Si$. As every 1 dimensional smooth minimal surface in $\S^2$ is an equator, we assume $n\ge3$.  With respect to \eqref{SE2var}, for any $f=f(\xi,t)\in C^\infty_c(\Si\times (0,\infty))$ we define
\begin{equation}\aligned
I(f)\triangleq\int_{\Si\times\R^+}\left(-\De_\Si f-|A_\Si|^2f-(n-1)t\f{\p f}{\p t}-t^2\f{\p^2 f}{\p t^2}-\f {t^3}2\f{\p f}{\p t}+\f{t^2}2 f\right)t^{n-3}fe^{\f{t^2}4}d\mu_\Si dt.
\endaligned
\end{equation}
Integrating by parts implies
\begin{equation}\aligned\label{If***}
I(f)=\int_{\Si\times\R^+}\left(-f\De_\Si f-|A_\Si|^2f^2+t^2\left(\f{\p f}{\p t}\right)^2+\f{t^2}2 f^2\right)t^{n-3}e^{\f{t^2}4}d\mu_\Si dt.
\endaligned
\end{equation}
Since
\begin{equation}\aligned
&\int_{0}^\infty\left(\f{\p}{\p t}\left(f e^{\f{t^2}8}\right)\right)^2t^{n-1}e^{\f{t^2}4}dt=\int_{0}^\infty\left(\left(\f{\p f}{\p t}\right)^2+\f{t}2f\f{\p f}{\p t}+\f{t^2}{16}f^2\right)t^{n-1}e^{\f{t^2}4}dt\\
&=\int_{0}^\infty\left(\f{\p f}{\p t}\right)^2t^{n-1}e^{\f{t^2}4}dt-\int_0^\infty\left(\f n4+\f{t^2}{16}\right)f^2t^{n-1}e^{\f{t^2}4}dt,
\endaligned
\end{equation}
then from \eqref{If***} one has
\begin{equation}\aligned
I(f)\ge\int_{\Si\times\R^+}\left(-fe^{\f{t^2}8}\De_\Si\left(fe^{\f{t^2}8}\right)-|A_\Si|^2\left(f e^{\f{t^2}8}\right)^2+t^2\left(\f{\p}{\p t}\left(f e^{\f{t^2}8}\right)\right)^2\right)t^{n-3} d\mu_\Si dt.
\endaligned
\end{equation}
If $C\Si$ is a stable minimal cone, then $I(f)\ge0$ for any $f=f(\xi,t)\in C^\infty_c(\Si\times (0,\infty))$ by the definition of stable minimal cones.
Hence we conclude that $C\Si$ is a stable self-expander.

Let $\la_1$ be the first eigenvalue of the operator $\De_\Si+|A_\Si|^2$ with the corresponding eigenfunction $\varphi_1$ on $\Si$ satisfying $\int_\Si \varphi_1^2=1$.
Let $\e\in C^\infty_c((0,\infty))$ and $f(y,t)=\varphi_1(y)\e(t)$ for $(y,t)\in\Si\times(0,\infty)$, then from \eqref{If***}
\begin{equation}\aligned\label{IJfphi}
I(f)=&\int_{\Si\times\R^+}\left(\la_1\e^2+\f12t^2\e^2+t^2\left(\f{\p \e}{\p t}\right)^2\right)t^{n-3}\varphi_1^2 e^{\f{t^2}4}d\mu_\Si dt\\
=&\int_{0}^\infty\left(\la_1\e^2+\f12t^2\e^2+t^2\left(\f{\p \e}{\p t}\right)^2\right)t^{n-3} e^{\f{t^2}4}dt\triangleq I_0(\e),
\endaligned
\end{equation}
where we have used $\De_\Si\varphi_1+|A_\Si|^2\varphi_1+\la_1\varphi_1=0$ in the first step of \eqref{IJfphi}.

If $C\Si$ is a unstable minimal cone, then by scaling (see the structure of the formula (6.4.3) in \cite{X}) there is a function $f=f(\xi,t)\in C^\infty_c(\Si\times [\ep_0,1])$ for some $0<\ep_0<1$ such that
\begin{equation}\aligned
\int_{\Si\times[\ep_0,1]}\left(-\De_\Si f-|A_\Si|^2f-(n-1)t\f{\p f}{\p t}-t^2\f{\p^2 f}{\p t^2}\right)t^{n-3}f d\mu_\Si dt<0.
\endaligned
\end{equation}
All the eigenvalues of Jacobi operator for $C\Si$ on $\Si\times [\ep_0,1]$ can be computed from \cite{Si} or Section 6.4 in \cite{X}.
Hence, with Lemma 6.4.5 in \cite{X}, the first eigenvalue $\la_1+\f{(n-2)^2}4+\left(\f{\pi}{\log\ep_0}\right)^2<0$. For any $\ep>0$, we define a Lipschitz function
\begin{equation*}
\e_{\de,R}(t)=\left\{\begin{split}
2\de^{\ep-\f n2}\left(e^{-\f{\de^2}4}-e^{-\f{R^2}4}\right)\left(t-\f\de2\right)&,\qquad \left[\f\de2,\de\right)\\
t^{\ep+1-\f n2}\left(e^{-\f{t^2}4}-e^{-\f{R^2}4}\right)&,\qquad [\de,R)\\
0\qquad&,\qquad \left(0,\f\de2\right)\cup[R,\infty)
\end{split}\right.
\end{equation*}
for any $R>1>\de>0$. By the definition of the functional $I_0$ in \eqref{IJfphi},
\begin{equation}\aligned
I_0(\e_{\de,R})=&\int_{\f\de2}^\de\left(\la_1+\f12t^2+\f{t^2}{(t-\f\de2)^2}\right)\e_{\de,R}^2t^{n-3} e^{\f{t^2}4}dt\\
&+\int_{\de}^R\left(\la_1\e_{\de,R}^2+\f12t^2\e_{\de,R}^2+t^2\left(\f{\p \e_{\de,R}}{\p t}\right)^2\right)t^{n-3} e^{\f{t^2}4}dt.
\endaligned
\end{equation}
Set $\e(t)=t^{\ep+1-\f n2}e^{-\f{t^2}4}$ on $(0,\infty)$. Since
\begin{equation}\aligned
&\left|\int_{\f\de2}^\de\left(\la_1+\f12t^2+\f{t^2}{(t-\f\de2)^2}\right)\e_{\de,R}^2t^{n-3} e^{\f{t^2}4}dt\right|\\
\le&\int_{\f\de2}^\de\left|\la_1+\f12t^2+\f{t^2}{(t-\f\de2)^2}\right|4\de^{2\ep-n}\left(t-\f\de2\right)^2t^{n-3}dt\\
\le&4\left(|\la_1|+\f32\right)\int_{\f\de2}^\de\de^{2\ep-n}t^{n-1} dt\le\left(|\la_1|+\f32\right)\f{4}n\de^{2\ep},
\endaligned
\end{equation}
then
\begin{equation}\aligned
&\lim_{R\rightarrow\infty,\de\rightarrow0}I_0(\e_{\de,R})=\int_{0}^\infty\left(\la_1\e^2+\f12t^2\e^2+t^2\left(\f{\p \e}{\p t}\right)^2\right)t^{n-3} e^{\f{t^2}4}dt\\
=&\int_0^\infty\left(\la_1t^{2\ep-1}+\f12t^{2\ep+1}+\left(\ep+1-\f n2-\f{t^2}2\right)^2t^{2\ep-1}\right)e^{-\f{t^2}4}dt\\
=&2^{2\ep-1}\int_0^\infty\left(\la_1+2s+\left(\ep+1-\f n2-2s\right)^2\right)s^{\ep-1}e^{-s}ds\\
=&2^{2\ep-1}\left(\left(\la_1+\left(\ep+1-\f n2\right)^2\right)\G(\ep)+2(n-1-2\ep)\G(1+\ep)+4\G(2+\ep)\right),
\endaligned
\end{equation}
where $\G(s)$ is the standard Gamma function $\int_0^\infty t^{s-1}e^{-t}dt$ for $s>0$.
Note $\la_1\le-\f{(n-2)^2}4-\left(\f{\pi}{\log\ep_0}\right)^2$ for $n\ge3$ and $\lim_{\ep\rightarrow0^+}\G(\ep)=+\infty$, then for any sufficiently small constant $\ep>0$, we have
\begin{equation}\aligned
&\lim_{R\rightarrow\infty,\de\rightarrow0}I_0(\e_{\de,R})<0.
\endaligned
\end{equation}
Hence $C\Si$ is a unstable self-expander. We complete the proof.
\end{proof}

\section{Rotational graphic self-expanders}
We want to study the existence of self-expanders via constructing barrier functions. Rotational symmetric self-expanders are a class of simple but important self-expanders in Euclidean space.
In this setting, \eqref{Graphv} reduces to the following ODE
\begin{equation}\aligned\label{Ju}
\mathcal {J}u\triangleq\f{u_{rr}}{1+u_r^2}+\f{n-1}ru_r+\f12ru_r-\f12u=0\qquad \mathrm{on}\ (0,\infty)
\endaligned
\end{equation}
with $n\ge2$, $u'(0)=0$ and $\lim_{r\rightarrow\infty}\f{u(r)}r=\k>0$.

Set
$$w(r)=\k r+\f{K}r$$
for $r>0$ and some constant $K$ to be defined. Clearly, $w_r=\k-\f{K}{r^2}$ and $w_{rr}=\f{2K}{r^3}$. Then
\begin{equation}\aligned\label{Jw***}
\mathcal {J}w=&\f{2K}{r^3(1+w_r^2)}+\f{(n-1)\k}r-\f{(n-1)K}{r^3}+\f12\k r-\f{K}{2r}-\f12\k r-\f{K}{2r}\\
=&\f{2K}{r^3(1+w_r^2)}+\f{(n-1)\k-K}r-\f{(n-1)K}{r^3}.
\endaligned
\end{equation}
For $n\ge2$ and $R\ge n$, put $K^\pm_R=\pm(1+\k)R^2$. Then $|\k-K_R^\pm r^{-2}|\ge1$ and
\begin{equation}\aligned\label{Jw***}
\mathcal {J}\left(\k r+\f{K^+_R}r-\f{K^+_R}R\right)\le\f{K^+_R}{r^3}+\f{(n-1)\k-K^+_R}r-\f{(n-1)K^+_R}{r^3}+\f{K^+_R}{2R}<0
\endaligned
\end{equation}
on $(0,R)$. Similarly,
\begin{equation}\aligned\label{Jw***}
\mathcal {J}\left(\k r+\f{K^-_R}r-\f{K^-_R}R\right)\ge\f{K^-_R}{r^3}+\f{(n-1)\k-K^-_R}r-\f{(n-1)K^-_R}{r^3}+\f{K^-_R}{2R}>0
\endaligned
\end{equation}
on $(0,R)$. Namely, $\k |x|+\f{K^\pm_R}{|x|}-\f{K^\pm_R}R$ are super(sub)solutions to \eqref{Graphv}.
Denote $B_R$ be the ball centered at the origin with radius $R$ in $\R^{n+1}$.
Hence, the smooth solution to \eqref{Graphv} with boundary data $\k R>0$ on $\p B_R$ for $R\ge n$ has \emph{a priori} boundary gradient estimate, which is also a global gradient estimate from the maximum principle for \eqref{Dewgra}.
From the existence theory of elliptic equations by the continuity method (as well as Schauder estimates),
there exists a unique smooth solution $u_{\k,R}$ to \eqref{Graphv} with $u_{\k,R}=\k R>0$ on $\p B_R$ for each $R\ge n$.
Let $\mathscr{A}$ denote an $n\times n$-orthonormal matrix with $\mathscr{A}\mathscr{A}^T=I$, then it is easy to see that $u_{\k,R}(\mathscr{A}\cdot)$ is also a smooth solution to \eqref{Graphv} with $u_{\k,R}(\mathscr{A}\cdot)=\k R$ on $\p B_R$. By the uniqueness of $u_{\k,R}$, we conclude that $u_{\k,R}$ is a rotationally symmetric solution.
Namely, there exists a unique solution $\phi_{\k,R}$ to \eqref{Ju} on $(0,R)$ with $\phi_{\k,R}(R)=\k R>0$ and $\phi_{\k,R}'(0)=0$.
If there is a point $r_*\in(0,R)$ with $\phi_{\k,R}'(r_*)<0$,
then there are a hyperplane $\{x_{n+1}=r^*\}$ and a domain $\Om_*$ with $\overline{\Om_*}\subset B_R$ such that $|u_{\k,R}|\ge |r^*|$ in $\Om_*$ and $u_{\k,R}= r^*$ on $\p\Om_*$.
However, the rectifiable hypersurface $\{(x,u_{\k,R})|\ x\in B_R\setminus\Om_*\}\cup(\Om_*\times\{r^*\})$ has smaller mass with the weight $e^{\f{|X|^2}4}$ than $\{(x,\phi_{\k,R}(|x|))|\ x\in B_R\}$,
which contradicts to that graph$_{u_{\k,R}}=\{(x,\phi_{\k,R}(|x|))|\ x\in B_R\}$ is a smooth \textbf{E}-minimizing self-expanding hypersurface in $B_R\times\R$ from Lemma \ref{comp}.
So we conclude that $\phi_{\k,R}'\ge0$ on $[0,R)$.
Moreover, by comparing the mass of the rectifiable hypersurface $\{(x,\max\{0,u_{\k,R}(|x|)\})|\ x\in B_R\}$ with the weight $e^{\f{|X|^2}4}$,
we concluding that $\phi_{\k,R}(0)\ge0$ again from the smooth \textbf{E}-minimizing self-expanding hypersurface graph$_{u_{\k,R}}$.

Now let us give a refined super(sub)solutions to \eqref{Ju}.
When $n=2$, we set $K=2+2\k$. For $r\in(0,\sqrt{2}]$,
$$w_r^2=\left(\f{K}{r^2}-\k\right)^2\ge\left(\f{K}{2}-\k\right)^2=1,$$
then obviously $\mathcal{J}w\le0$. For $r\in[\sqrt{2},\infty)$,
$$\mathcal{J}w\le\f{2K}{r^3}+\f{\k-K}r-\f{K}{r^3}=\f{K}{r^3}+\f{\k-K}r\le\f{K}{2r}+\f{\k-K}r=-\f1r<0.$$
Hence we always have $\mathcal{J}w\le0$ on $(0,\infty)$. When $n\ge3$, $\mathcal{J}w\le0$ holds clearly if $K=(n-1)\k$. Namely,
\begin{equation}\aligned\label{Jkn-1k}
\mathcal{J}\left(\k r+\f{(n-1)\k}r\right)\le0\qquad \mathrm{for}\ n\ge3.
\endaligned
\end{equation}
Note that $\mathcal{J}(\k r)\ge0$ and $\phi_{\k,R}(0)\ge0$. By comparison principle, we obtain
\begin{equation}\aligned
\k r\le \phi_{\k,R}(r)\le \k r+\f Kr\le\k r+\f{n\k+2}r \qquad \mathrm{for}\ r\in[0,R].
\endaligned
\end{equation}
Combining $\phi_{\k,R}'\ge0$, we have $\k r\le \phi_{\k,R}(r)\le \phi_{\k,R}(1)\le(n+1)\k+2$ for every $r\in[0,1]$.
Let $R\rightarrow\infty$ in $\phi_{\k,R}$, we obtain a function $\phi_\k$ such that $\{(x,\phi_\k(|x|))|\ x\in \R^n\}$ is an \textbf{E}-minimizing self-expanding hypersurface in $\R^{n+1}$. Then its singular set has Hausdorff dimension $\le n-7$ in case $n\ge7$ or is empty in case $n\le6$. As the hypersurface is rotationally symmetric, then its singular set is empty by blowing up argument, which follows that $\phi_\k$ is smooth by the regularity of elliptic equations. Moreover, $\phi_\k$ is a smooth solution to \eqref{Ju} with $\phi_\k'\ge \phi_\k'(0)=0$ and $\lim_{r\rightarrow\infty}\f{\phi_\k(r)}r=\k>0$. In particular,
\begin{equation}\aligned\label{phicomp}
0\le& \phi_\k(r)-\k r\le((n+1)\k+2)\min\{1,r^{-1}\} \qquad \mathrm{for}\ r\in(0,\infty).
\endaligned
\end{equation}
We denote
\begin{equation}\aligned\label{Mk}
M_\k=\{(x,\phi_\k(|x|))|\ x\in\R^n\}
\endaligned
\end{equation}
for such $\phi_\k$, which is a rotational symmetric self-expander.

Now we consider 1-dimensional self-expander, namely, the solution to the following ODE
\begin{equation}\aligned\label{J1u}
\mathcal{J}_1u\triangleq\f{u_{yy}}{1+u_y^2}+\f12yu_y-\f12u=0\qquad \mathrm{on}\ (-\infty,\infty)
\endaligned
\end{equation}
with $\lim_{|y|\rightarrow\infty}\f{u(y)}{|y|}=\k>0$. Assume that $u$ is a solution to \eqref{J1u} on $(-R,R)$ with $u(\pm R)=\k R$.
Then from lemma \ref{comp}, graph$_u$ is a \textbf{E}-minimizing self-expanding curve. Hence $u\le \k R$ on $(-R,R)$, or else the curve $\{(y,\min\{u,\k R\})|\ |y|<\k R\}$ has smaller mass with the weight $e^{\f{|X|^2}4}$ than graph$_u$.
Note that $\mathcal{J}_1(\k y)=0$ and $\mathcal{J}_1(\k R)\le0$.
Analog to the previous argument of the case $n\ge2$, with \emph{a priori} gradient estimate and the continuity method,
there exists a unique solution $\hat{\phi}_{\k,R}$ to \eqref{J1u} with $\hat{\phi}_{\k,R}(\pm R)=\k R$ such that $\hat{\phi}_{\k,R}(y)=\hat{\phi}_{\k,R}(-y)$, $\hat{\phi}_{\k,R}'(0)=0$, and $\hat{\phi}_{\k,R}'\ge0$ on $[0,R)$.

Let
$$w(y)=\k y+\f{\tau}y e^{-\f{1}4y^2}$$
with $\tau>0$ to be defined later. Clearly, $w_y=\k-\f{\tau}2 e^{-\f{1}4y^2}-\f{\tau}{y^2} e^{-\f{1}4y^2}$ and
$$w_{yy}=\left(\f{\tau y}4+\f{\tau}{2y}+\f{2\tau}{y^3}\right) e^{-\f{1}4y^2}.$$
Hence
\begin{equation}\aligned
\mathcal{J}_1w=\left(\f{1}{1+w_y^2}\left(\f{y}4+\f{1}{2y}+\f{2}{y^3}\right)-\f y4-\f1y\right)\tau e^{-\f{1}4y^2}.
\endaligned
\end{equation}
Set $\tau=2e\max\{\k,2\}$. When $y\in(0,2)$, we have $w_y\le\k-\f{\tau}2 e^{-1}-\f{\tau}{y^2} e^{-\f{1}4y^2}\le-\f{\tau}{y^2} e^{-\f{1}4y^2}$, and
\begin{equation}\aligned
\mathcal{J}_1w\le&\left(\left(\f{y}4+\f{1}{2y}+\f{2}{y^3}\right)\f{y^4}{\tau^2}e^{\f12y^2}-\f y4-\f1y\right)\tau e^{-\f{1}4y^2}\\
\le&\left(\left(\f{y^5}4+\f{y^3}{2}+2y\right)\f{1}{\tau^2}e^{\f12y^2}-1\right)\tau e^{-\f{1}4y^2}\\
\le&\left(\f{16}{\tau^2}e^{2}-1\right)\tau e^{-\f{1}4y^2}\le0.
\endaligned
\end{equation}
When $y\ge2$, we have
\begin{equation}\aligned
\mathcal{J}_1w=\left(\f{y}4+\f{1}{2y}+\f{2}{y^3}-\f y4-\f1y\right)\tau e^{-\f{1}4y^2}\le0.
\endaligned
\end{equation}
From comparison principle and $\mathcal{J}_1(\k y)=0$, we obtain
\begin{equation}\aligned
\k y\le \hat{\phi}_{\k,R}(y)\le \k y+\f{\tau}y e^{-\f{1}4y^2}\le\k y+\f{2e(\k+2)}y e^{-\f{1}4y^2} \qquad \mathrm{for}\ y\in[0,R].
\endaligned
\end{equation}
Since $\hat{\phi}'_{\k,R}\ge0$ on $[0,R)$, then $\hat{\phi}_{\k,R}(y)\le\hat{\phi}_{\k,R}(2)\le \k+2$ for $y\in(0,2]$. Therefore,
\begin{equation}\aligned
\k y\le \hat{\phi}_{\k,R}(y)\le\k y+\min\left\{\f{2e(\k+2)}y e^{-\f{1}4y^2},3\k+2\right\} \qquad \mathrm{for}\ y\in[0,R].
\endaligned
\end{equation}
Letting $R\rightarrow\infty$ for $\hat{\phi}_{\k,R}$, we obtain a function $\hat{\phi}_{\k}$ to \eqref{J1u} in $\R$ with $\hat{\phi}_{\k}(y)=\hat{\phi}_{\k}(-y)$ and $\lim_{y\rightarrow\infty}\f{\hat{\phi}_{\k}(y)}y=\k>0$. Further, $\{(y,\hat{\phi}_{\k}(y))|\ y\in \R^1\}$ is an \textbf{E}-minimizing self-expanding curve in $\R^{2}$, which implies that $\hat{\phi}_{\k}$ is smooth. In particular,
\begin{equation}\aligned
0\le& \hat{\phi}_{\k}(y)-\k y\le\min\left\{\f{2e(\k+2)}y e^{-\f{1}4y^2},3\k+2\right\} \qquad \mathrm{for}\ y\in(0,\infty).
\endaligned
\end{equation}
\textbf{Remark:} The above estimate may be not sharp for the decay order.
\begin{lemma}\label{phila0}
Let $\phi_\la$ be a smooth solution to \eqref{Ju} for $n\ge1$ with $\phi_\la'\ge \phi_\la'(0)=0$ and $\lim_{r\rightarrow\infty}\f{\phi_\la(r)}r=\la>0$. Then $\lim_{\la\rightarrow\infty}\phi_\la(0)=\infty$.
\end{lemma}
\begin{proof}
If the Lemma fails, there are a constant $\la_0$ and a sequence $\la_i\rightarrow\infty$ so that for such solutions $\phi_{\la_i}$ there holds $0\le\phi_{\la_i}(0)\le\la_0$ for each $i\ge1$. Denote
$$\Si_{i}=\la_i\big\{(x,s)\in\R^{n+1}|\ s=\phi_{\la_i}(|x|)-\phi_{\la_i}(0),\ x\in\R^n\big\}.$$
Then $\Si_i$ is a rotationally symmetric graph through the origin and
\begin{equation}\aligned\label{Sicontain}
\Si_i\subset&\la_i\{(x,s)\in\R^{n+1}|\ s\ge\max\{\la_i|x|-\phi_{\la_i}(0),0\},\ x\in\R^n\big\}\\
=&\{(x,s)\in\R^{n+1}|\ s\ge\la_i\max\{|x|-\phi_{\la_i}(0),0\},\ x\in\R^n\big\}\\
\subset&\{(x,s)\in\R^{n+1}|\ s\ge\la_i\max\{|x|-\la_0,0\},\ x\in\R^n\big\}.
\endaligned
\end{equation}
Denote $p_i=(0,\phi_{\la_i}(0))\in\R^{n+1}$, and $S_i$ be a portion of $\p B_{k\la_0}(0)$ with $k\ge2$ and boundary $\p S_i=\p\Si_i\cap\p B_{k\la_0}(0)$ such that $S_i$ belongs to the last term in \eqref{Sicontain}.
By the definition of $\phi_\la$, $\Si_i$ is a rigid-body motion of an \textbf{E}-minimizing self-expanding hypersurface, then
\begin{equation}\aligned\label{VolSi***}
\mathcal{H}^n(\Si_i\cap B_{k\la_0}(0))\le\int_{\Si_i\cap B_{k\la_0}(0)}e^{\f{|X+p_i|^2}{4\la_i^2}}d\mu_{\Si_i}\le\int_{S_i}e^{\f{|X+p_i|^2}{4\la_i^2}}d\mu_{S_i}=\mathcal{H}^n(S_i)e^{\f{(k+1)^2\la_0^2}{4\la_i^2}}.
\endaligned
\end{equation}
From the equation of self-expanders, the mean curvature of $\Si_i$ is bounded by $\f{(k+1)\la_0}{2\la_i^2}$ in $B_{k\la_0}(0)$.
Combining Theorem 17.6 of \cite{S} and $0\in\Si_i$, for the sufficiently large $i>0$ we have $\mathcal{H}^n(\Si_i\cap B_{k\la_0}(0))\ge\f{3}4\omega_nk^n\la_0^n$,
where $\omega_n$ is the volume of $n$-dimensional unit ball in $\R^n$. Combining \eqref{VolSi***}, we have
$$\mathcal{H}^n(S_i)\ge\f{1}2\omega_nk^n\la_0^n$$
for the sufficiently large $i>0$. From \eqref{Sicontain}, for the sufficiently large $i>0$ we have
$$\limsup_{i\rightarrow\infty}\mathcal{H}^n(S_i)\le\int_{B_{\la_0}(0)}\f{k\la_0}{\sqrt{k^2\la_0^2-|x|^2}}dx\le\f{k}{\sqrt{k^2-1}}\omega_n\la_0^n.$$
It's a contradiction for the large $k>0$.
This completes the proof.
\end{proof}

Now we give a Bernstein type theorem for self-expanders. One may compare it with Lemma \ref{area-minimSE}.
\begin{theorem}
Any smooth self-expander whose tangent cone at infinity is a hyperplane with an integer multiplicity, must be a hyperplane.
\end{theorem}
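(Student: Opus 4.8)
The plan is to use the rotationally symmetric self-expanders $M_\kappa=\{(x,\phi_\kappa(|x|)):x\in\R^n\}$ of \eqref{Mk} as barriers, together with a comparison at an interior extremum for the graphic equation \eqref{Graphu}. First observe that among all hyperplanes only those through the origin are self-expanders: for an affine hyperplane with unit normal $\nu$ at signed distance $d$ from $0$ one has $X^N=d\,\nu$ and $H=0$, so \eqref{SE} forces $d=0$. Writing the tangent cone at infinity as $k\llbracket P\rrbracket$ with $P$ a hyperplane through the origin and $k\in\Z_+$, the conclusion ``$M$ is a hyperplane'' means exactly $M=P$; after a rotation we may assume $P=\{x_{n+1}=0\}$.

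The first step is to upgrade the tangent-cone hypothesis to the statement that $M$ is an entire graph $x_{n+1}=u(x)$ over $P$ with $u(x)=o(|x|)$ as $|x|\to\infty$ (in particular $k=1$). Since $R^{-1}M\to k\llbracket P\rrbracket$ in the varifold sense, for every $\varepsilon>0$ the part of $M$ outside a large ball is confined to the cone $\{|x_{n+1}|\le\varepsilon|x|\}$; combining this confinement with the comparison/strong maximum principle for \eqref{Graphu} (distinct graphical solutions cannot touch unless they coincide), one shows that over $P$ the hypersurface $M$ consists of a single sheet, so that higher multiplicity is excluded and $\langle\nu,e_{n+1}\rangle$ does not vanish. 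This reduction is the main obstacle: all the genuinely global content lies in passing from purely asymptotic information to a graph representation and ruling out $k\ge2$; the comparison below is then routine.

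Granting $M=\operatorname{graph}(u)$ with $u=o(|x|)$, fix $\kappa>0$. By \eqref{phicomp} we have $\phi_\kappa(r)\ge\kappa r$, so $u(x)-\phi_\kappa(|x|)\le o(|x|)-\kappa|x|\to-\infty$ and the difference attains its maximum at some interior point $x_0$. There $Du(x_0)=D\big(\phi_\kappa(|\cdot|)\big)(x_0)$, so the matrices $(g^{ij})$ of the two copies of \eqref{Graphu} agree at $x_0$, call them $(\bar g^{ij})$, while the Hessian of $u-\phi_\kappa$ is negative semidefinite there. Subtracting the two equations and using that the first-order terms cancel at $x_0$ gives
$$\bar g^{ij}(u-\phi_\kappa)_{ij}(x_0)=\f{(u-\phi_\kappa)(x_0)}{2},$$
whose left-hand side is $\le 0$; hence $(u-\phi_\kappa)(x_0)\le0$ and therefore $u\le\phi_\kappa$ on all of $\R^n$. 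Now let $\kappa\to0$: the $\phi_\kappa$ are monotone in $\kappa$ by comparison, their limit $\phi_0$ is a bounded rotational solution of \eqref{Ju} with $\phi_0'(0)=0$, and comparing $\phi_0$ with the zero solution by the same argument forces $\phi_0\equiv0$. Thus $u\le0$. Since $-u$ is again a self-expander graph with $-u=o(|x|)$, applying the identical argument to $-u$ (equivalently, comparing with the lower barriers $-\phi_\kappa$) yields $u\ge0$. Hence $u\equiv0$ and $M=P$.

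The core of the rigidity can also be read off intrinsically, which clarifies why the barriers are needed only for the behaviour at infinity: from $\De_M X=H$ and \eqref{SE}, the restriction of the height function to any self-expander satisfies
$$\De_M x_{n+1}+\f12\lan X,\na x_{n+1}\ran=\f12\,x_{n+1},$$
an equation whose zeroth-order coefficient has the favourable sign, so that $x_{n+1}$ admits no positive interior maximum and no negative interior minimum. The barriers $\phi_\kappa$ supply exactly the decay of $x_{n+1}$ at infinity that upgrades this ``no interior extremum'' property into the rigidity $x_{n+1}\equiv0$, i.e. $M=P$.
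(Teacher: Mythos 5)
Your second half is fine: the interior-extremum comparison of $u$ against $\phi_\k$, the cancellation of the first-order terms at the touching point, and the limit $\phi_\k\to0$ are all correct, and they are essentially the paper's own argument written in graphical coordinates. But the proof actually lives in your first step, and there it has a genuine gap. You need to upgrade the varifold convergence $R^{-1}M\to k\llbracket P\rrbracket$ to the statement that $M$ is an entire single-valued graph over $P$ with $u=o(|x|)$ (in particular $k=1$), and your sketch of this does not work: the appeal to ``distinct graphical solutions of \eqref{Graphu} cannot touch'' is circular, because the individual sheets of $M$ over $P$ are not a priori global graphical solutions --- they may have vertical tangent planes and may be joined to one another inside a compact set, exactly as would happen for a connected smooth hypersurface with two ends each asymptotic to $P$ (tangent cone $2\llbracket P\rrbracket$). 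Nothing in the hypothesis excludes such an $M$ a priori; ruling it out is the content of the theorem, and you have explicitly flagged this reduction as ``the main obstacle'' without closing it. (Even the confinement $|x_{n+1}|\le\vep|x|$ outside a large ball needs a word: varifold convergence controls measures, and passing to support confinement uses a density lower bound from the monotonicity formula for self-expanders.)

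The paper avoids the reduction entirely by applying the rotational barriers to the hypersurface $M$ itself rather than to a graphing function. By \eqref{Jkn-1k}, $w_\pm=\pm\left(\ep r+\f{(n-1)\ep}{r}\right)$ are super/subsolutions of \eqref{Ju} for $n\ge3$, and adding a constant $c\ge0$ preserves this since $\mathcal{J}(w+c)=\mathcal{J}(w)-\f c2$. Sliding the graph of $w_++c$ down from large $c$, a first interior contact with $M$ would occur at a point where the barrier is nonvertical, so $M$ is locally a graph near that point and the strong maximum principle yields a contradiction; hence $M$ is pinched between the graphs of $w_\pm$, and letting $\ep\to0$ forces $M\subset P$, since the barriers degenerate only along the $x_{n+1}$-axis, which a smooth $n$-dimensional hypersurface cannot occupy. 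In this route the graph representation, single-valuedness, and $k=1$ are conclusions rather than inputs, which is why the multiplicity hypothesis in the statement causes no trouble. The paper handles low dimensions by the product trick ($\Si$ a self-expander in $\R^k$ implies $\Si\times\R^2$ is one in $\R^{k+2}$); your $\phi_\k$ from \eqref{Mk} exists for $n\ge2$, but $n=1$ would need $\hat\phi_\k$. Your closing intrinsic remark on the height function is correct but has the same dependency: using $\De_M x_{n+1}+\f12\lan X,\na x_{n+1}\ran=\f12x_{n+1}$ requires decay of $x_{n+1}$ on $M$ at infinity, which again only comes from the barrier pinching applied to the nongraphical $M$.
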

\begin{proof}
Let $\Si$ be a smooth self-expander $\R^{n+1}$ with tangent cone $\R^n$ at infinity.
For $n\ge3$, we have two barrier functions $\pm\ep r\pm\f{(n-1)\ep}r$ from \eqref{Jkn-1k}. In other words, by maximum principle $\Si$ is pinched by two graphs
$$\left\{\left(x,\ep|x|+\f{(n-1)\ep}{|x|}\right)\bigg|\ x\in\R^n\setminus\{0\}\right\},\quad \mathrm{and}\ \ \left\{\left(x,-\ep|x|-\f{(n-1)\ep}{|x|}\right)\bigg|\ x\in\R^n\setminus\{0\}\right\}.$$
Letting $\ep\rightarrow0$ implies that $\Si$ is $\R^n$. If $\Si$ is a self-expander in $\R^k$, then $\Si\times\R^2$ is also a self-expander in $\R^{k+2}$. Therefore, we complete the proof.
\end{proof}

\section{Asymptotic analysis for self-expanders}

\subsection{Allard's type regularity theorem and decay at infinity for varifold self-expanders}
Let $T$ be an $n$-dimensional varifold self-expander in $\R^{n+1}$ with boundary $\p T$ in $B_{R_0}$ for some $R_0>0$.
Set
$$\mathcal{T}: t\in(0,\infty)\rightarrow \mathcal{T}_t=\sqrt{t}T.$$
We recall Huisken's monotonicity formula for the Brakke flow $\mathcal{T}$ with boundary as follows (see formula (1.2) in \cite{CM2} for instance with the test function replacing by $\chi_{_{B_{R}}}(t^{-\f12}\cdot)$).
Here $\chi_{_{B_{R}}}$ denotes the characteristic function on $B_R$, i.e., it is equal to 1 on $B_{R}$ and 0 on $\R^{n+1}\setminus B_R$.
Let $\r(X,t)=\left(4\pi(t_0-t)\right)^{-n/2}\exp\left(-\f{|X-X_0|^2}{4(t_0-t)}\right)$ with $0<t<t_0$, then for almost all $R\ge R_0$
\begin{equation}\aligned\label{MonotoB}
\f{d}{d t}\int_{\mathcal{T}_{t}\setminus B_{\sqrt{t}R}}\r d\mu_{\mathcal{T}_t}=&\f12 t^{-\f12}\int_{\p(\mathcal{T}_{t}\cap B_{\sqrt{t}R})}\left(\f{|X^T|}{t}+\left\lan\f{(X-X_0)^T}{t_0-t},\f{X^T}{|X^T|}\right\ran\right)\r d\mu_{\mathcal{T}_t}\\
&-\f14\int_{\mathcal{T}_{t}\setminus B_{\sqrt{t}R}}\left|\f{X^N}{t}+\f{(X-X_0)^N}{t_0-t}\right|^2\r d\mu_{\mathcal{T}_t}.
\endaligned
\end{equation}
Here, $d\mu_{\mathcal{T}_t}$ is the volume element of $\mathcal{T}_t$.
We assume that the multiplicity function of the varifold $T$ is 1 a.e. and $\f1rT$ converges to $|\llbracket C\rrbracket|$ locally in the varifold sense as $r\rightarrow\infty$. Here, $C$ is a $C^{3,\a}$-regular cone in $\R^{n+1}$ for some $\a\in(0,1)$, i.e., a cone over a compact, embedded $C^{3,\a}$-hypersurface without boundary in the sphere $\S^n$.

For any $\textbf{X}=(X,t)\in\R^{n+1}\times\R$ and $0<r<\sqrt{t}$, we define the \emph{Gaussian density ratio} of $\mathcal{T}$ at $\textbf{X}$ with radius $r$ by
\begin{equation}\aligned
\Th(\mathcal{T},\textbf{X},r)=\int_{Y\in \mathcal{T}_{t-r^2}}\f1{(4\pi r^2)^{n/2}}e^{-\f{|Y-X|^2}{4r^2}}d\mu_{\mathcal{T}_{t-r^2}}=\int_{Z\in\sqrt{\f{t}{r^2}-1}T-\f{X}r}\f1{(4\pi )^{n/2}}e^{-\f{|Z|^2}{4}},
\endaligned
\end{equation}
and $\Th(\mathcal{T},\textbf{X},\sqrt{t})=\lim_{r\rightarrow\sqrt{t}}\Th(\mathcal{T},\textbf{X},r)$.
Due to $C=\lim_{r\rightarrow\infty}\f1rT$ in the varifold sense, it follows that for any fixed $t>0$
\begin{equation}\aligned
\sup_{|X|\ge\f1{\ep}}\Th(\mathcal{T},\textbf{X},\sqrt{t})=&\sup_{|X|\ge\f1\ep}\int_{Z\in C-\f{X}{\sqrt{t}}}\f1{(4\pi )^{n/2}}e^{-\f{|Z|^2}{4}}\\
=&\sup_{|X|\ge\f1{\ep\sqrt{t}}}\int_{Z\in C-X}\f1{(4\pi )^{n/2}}e^{-\f{|Z|^2}{4}}.\\
\endaligned
\end{equation}
Thus for any $0<\ep<1$, there is a constant $\ep'=\ep'(\ep)>0$ such that
$$\sup_{|X|\ge\f1{\ep'},t\in(0,2]}\Th(\mathcal{T},\textbf{X},\sqrt{t})\le\sup_{|X|\ge\f1{\sqrt{2}\ep'}}\int_{Z\in C-X}\f1{(4\pi )^{n/2}}e^{-\f{|Z|^2}{4}}<1+\f{\ep}2.$$
From \eqref{MonotoB} with almost each $R\in(R_0,R_0+1)$, for any $0<r\le\sqrt{t}\le\sqrt{2}$, any $\textbf{X}=(X,t)$ with $|X|\ge\f1{\ep'}$, and the sufficiently small $\ep'>0$ we get
\begin{equation}\aligned\label{ThTXrest*}
&\Th(\mathcal{T},\textbf{X},r)\le\int_{Y\in \mathcal{T}_{t-r^2}\setminus B_{\sqrt{t-r^2}R}}\f1{(4\pi r^2)^{n/2}}e^{-\f{|Y-X|^2}{4r^2}}d\mu_{\mathcal{T}_{t-r^2}}+\f{\ep}4\\
<&\int_0^{t-r^2}\f12s^{-\f12}\left(\int_{Y\in \p\left(\mathcal{T}_{s}\setminus B_{\sqrt{s}R}\right)}\left(\f{|Y|}{s}+\f{|Y-X|}{t-s}\right)\f{e^{-\f{|Y-X|^2}{4(t-s)}}}{(4\pi(t-s))^{n/2}}\right)ds\\
&+\int_{Z\in C}\f1{(4\pi r^2)^{n/2}}e^{-\f{|Z-X|^2}{4r^2}}+\f{\ep}4\\
<&\f{1}{(4\pi r^2)^{n/2}}e^{-\f{\left|\f1{\ep'}-\sqrt{2}R\right|^2}{4r^2}}\int_0^{t-r^2}\left(\int_{\p\left(\mathcal{T}_{s}\setminus B_{\sqrt{s}R}\right)}\left(\f{R}{2s}+\f{\left|\sqrt{2}R+\f1{\ep'}\right|}{2r^2\sqrt{s}}\right)\right)ds\\
&+\int_{Z\in C-\f{X}{r}}\f1{(4\pi )^{n/2}}e^{-\f{|Z|^2}{4}}+\f{\ep}4.
\endaligned
\end{equation}
Note that $\mathcal{T}_s$ converges as $s\rightarrow0$ to $C$ in the varifold sense, then combining Fubini theorem we have
\begin{equation}\aligned
\f{1}{(4\pi r^2)^{n/2}}e^{-\f{\left|\f1{\ep'}-\sqrt{2}R\right|^2}{4r^2}}\int_0^{t-r^2}\left(\int_{\p\left(\mathcal{T}_{s}\setminus B_{\sqrt{s}R}\right)}\left(\f{R}{2s}+\f{\left|\sqrt{2}R+\f1{\ep'}\right|}{2r^2\sqrt{s}}\right)\right)ds<\f{\ep}4
\endaligned
\end{equation}
for some $R\in(R_0,R_0+1)$ and each sufficiently small $\ep'>0$. Combining \eqref{ThTXrest*} one has
\begin{equation}\aligned\label{ThTXrest**}
&\Th(\mathcal{T},\textbf{X},r)\le\int_{Z\in C-\f{X}{r}}\f1{(4\pi )^{n/2}}e^{-\f{|Z|^2}{4}}+\f{\ep}2.
\endaligned
\end{equation}
for any $0<r\le\sqrt{t}\le\sqrt{2}$, and any $\textbf{X}=(X,t)$ with $|X|\ge\f1{\ep'}$. In particular, taking $t=1+r^2$ and $r\rightarrow0$ implies
\begin{equation}\aligned\label{monoTXr}
\limsup_{r\rightarrow0}\left(\int_{Y\in T}\f1{(4\pi r^2)^{n/2}}e^{-\f{|Y-X|^2}{4r^2}}d\mu_T\right)
\le1+\ep
\endaligned
\end{equation}
for any $|X|\ge\f1{\ep'}$.

\begin{theorem}\label{weaksmooth}
If $T$ is a multiplicity one $n$-varifold self-expander in $\R^{n+1}$ with boundary $\p T$ in $B_{R_0}$, and its tangent cone at infinity is $|\llbracket C\rrbracket|$. Then there exists a small constant $\ep_1>0$ depending only on $n,C$ and $\sup_{x\in\p T}|x|$ such that $\mathrm{spt}T\setminus B_{\f1{\ep_1}}$ is smooth.
\end{theorem}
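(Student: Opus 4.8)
The plan is to read Theorem \ref{weaksmooth} as a regularity statement for the flow associated to $T$ and to feed the uniform density bound obtained just above into a local regularity theorem. Recall from \eqref{phiTaut} that $\mathcal{T}:\ t\in(0,\infty)\to\mathcal{T}_t=\sqrt{t}T$ is an integral Brakke flow, and that the monotonicity computation preceding this theorem produces, for any prescribed $\ep>0$, a radius $1/\ep'$ with
$$\Th(\mathcal{T},\textbf{X},r)\le 1+\ep\qquad\text{for all }0<r\le\sqrt{t}\le\sqrt{2},\ t\in(0,2],\ |X|\ge\tfrac1{\ep'}.$$
This is a bound on the Huisken--Gaussian density ratios of $\mathcal{T}$ at a \emph{definite} scale throughout a spacetime region, not merely the pointwise limit \eqref{monoTXr}, and it is exactly the hypothesis of White's local regularity theorem for Brakke flows.

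First I would fix $\ep=\ep_0(n)$ equal to the universal threshold in White's theorem and let $\ep'=\ep'(n,C,R_0)$ be the corresponding radius from the estimate above. Next, for an arbitrary $X_0$ with $|X_0|$ large I would localize at the spacetime point $(X_0,1)$ and consider the backward parabolic neighborhood $B_\rho(X_0)\times[1-\rho^2,1]$ for a fixed small $\rho\in(0,1)$. Choosing $\ep_1$ so small that $|X_0|\ge 1/\ep_1$ forces $|X|\ge 1/\ep'$ for every $X\in B_\rho(X_0)$ (and noting $[1-\rho^2,1]\subset(0,2]$ together with $r\le\rho\le\sqrt{t}$ on this interval), every spacetime point of the neighborhood and every admissible scale satisfy the density bound $\le 1+\ep_0$. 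White's theorem then yields that $\mathcal{T}$ is smooth with bounded curvature near $(X_0,1)$; in particular the time-one slice $\mathcal{T}_1=T$ is a smooth hypersurface near $X_0$.

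Since $X_0$ with $|X_0|\ge 1/\ep_1$ was arbitrary, $\mathrm{spt}\,T\setminus B_{1/\ep_1}$ is smooth, and by construction $\ep_1$ depends only on $n$, $C$ and $R_0\ge\sup_{x\in\p T}|x|$, as claimed. An equivalent elliptic route is available and matches the subsection heading: at $X_0$ with $L=|X_0|$ the generalized mean curvature $\tfrac12X^N$ of $T$ satisfies $\rho\sup_{B_\rho(X_0)}|H|\le\ep_A$ once $\rho=c/L$ with $c$ small, while the tangent-cone-at-infinity hypothesis together with the density bound forces $\rho^{-n}\mu_T(B_\rho(X_0))\le 1+\ep_A$; Allard's regularity theorem then gives a $C^{1,\alpha}$ graph, and the smooth elliptic equation \eqref{Graphv} bootstraps this to $C^\infty$.

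The main obstacle is not the regularity theorem itself, which one uses as a black box, but securing the density bound at a definite scale uniformly over the whole parabolic (or geodesic) neighborhood rather than only as a pointwise $\limsup$ as in \eqref{monoTXr}; this is precisely what the monotonicity computation preceding the statement accomplishes, by converting the hypothesis $\tfrac1rT\to|\llbracket C\rrbracket|$ into the uniform estimate displayed above. A secondary technical point is the uniformity of the constants $\ep',\ep_1$ (equivalently $c$) in the direction $X_0/|X_0|$, which follows from compactness of the smooth link of $C$ in $\S^n$.
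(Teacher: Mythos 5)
Your primary (parabolic) argument is correct, and it takes a genuinely different route from the paper's own proof. The paper never invokes White's local regularity theorem here: it fixes the time-one slice and argues by contradiction that the \emph{Euclidean} area density ratios of $T$ are small at small scales. Concretely, assuming $\mathcal{H}^n(T\llcorner B_{\r_i}(X_0))/(\omega_n\r_i^n)>1+2\ep$ along $\r_i\rightarrow0$, it uses the monotonicity identity 17.3 in \cite{S} (with the generalized mean curvature $\f12X^N$ giving the lower bound $-\f12(|X_0|+\r)$ on the derivative of the density ratio) to propagate $>1+\f32\ep$ down to all $\r\in(0,\r_1]$, then computes the Gaussian density ratio $\Th(\mathcal{T},(X_0,1+r^2),r)$ by slicing to force $\Th>1+\ep$ for small $r$, contradicting \eqref{monoTXr}; this establishes the bound \eqref{dens}, after which Allard's theorem (Theorem 24.2 in \cite{S}) and elliptic bootstrapping via the self-expander equation finish the proof. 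Your route instead feeds the uniform-at-definite-scale Gaussian estimate (the display preceding \eqref{monoTXr}, which indeed holds for all $0<r\le\sqrt{t}\le\sqrt2$ and $|X|\ge1/\ep'$, not merely as a $\limsup$) directly into White's theorem for Brakke flows, which is consistent with the paper's toolkit since the author applies Theorem 4.1 of \cite{W05} to Brakke flows elsewhere (Lemma \ref{volsingxi}). What each buys: your version outsources the Gaussian-to-Euclidean conversion to White's theorem and yields curvature bounds for free, at the cost of needing the Brakke-flow version of that theorem together with the (here easy, but worth stating) fact that the conclusion is non-vacuous — since $X_0\in\mathrm{spt}T$ and $T$ is integral with locally bounded first variation, the Gaussian density at $(X_0,1)$ is at least $1$, so the smooth flow produced near $(X_0,1)$ really contains $X_0$; the paper's version stays purely elliptic and needs only Allard, at the cost of the explicit contradiction argument. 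One caution about your closing ``equivalent elliptic route'': the step asserting that the Gaussian bound ``forces $\r^{-n}\mu_T(B_\r(X_0))\le1+\ep_A$'' is not immediate — it is exactly the claim \eqref{dens}, i.e.\ the nontrivial content of the paper's proof, and as written you have asserted rather than proved it; your argument stands only because the White route does not need it.
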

\begin{proof}
Let $\mathcal{H}^{k}(K)$ denote the $k$-dimensional Hausdorff measure of $K$ for any constant $k\ge0$ and any set $K$ in Euclidean space.
We claim that for every $X\in T\llcorner\left(\R^{n+1}\setminus B_{\f{2}{\ep'}}\right)$ (one may find the definition of the natation '$\llcorner$' in Chapter 15 in \cite{S}), there is a sufficiently small $\r_0>0$ such that
\begin{equation}\aligned\label{dens}
\f{\mathcal{H}^n(T\llcorner B_{\r_0}(X))}{\omega_n\r_0^n}\le1+2\ep,
\endaligned
\end{equation}
where $\omega_n$ is the volume of $n$-dimensional unit ball $B_1\subset\R^n$. If not, there exist a point $X_0\in T\llcorner\left(\R^{n+1}\setminus B_{\f{2}{\ep'}}\right)$ and a sequence $\r_1>\r_2>\cdots>\r_i\rightarrow0$ such that
$$\f{\mathcal{H}^n(T\llcorner B_{\r_i}(X_0))}{\omega_n\r_i^n}>1+2\ep,$$
for all $i\ge1$. By the monotonicity identity 17.3 in \cite{S} and the definition of generalized mean curvature of varifold self-expander $T$, we get
$$\f{d}{d\r}\left(\f{\mathcal{H}^n(T\llcorner B_{\r}(X_0))}{\omega_n\r^n}\right)\ge-\f12\left(|X_0|+\r\right).$$
Suppose that $\r_1$ is sufficiently small, then
\begin{equation}\aligned\label{Trep}
\f{\mathcal{H}^n(T\llcorner B_{\r}(X_0))}{\omega_n\r^n}>1+\f32\ep,
\endaligned
\end{equation}
for every $\r\in(\r_i,\r_1]$. Clearly, we can force $\r_i\rightarrow0$, such that the inequality \eqref{Trep} holds on $(0,\r_1]$.

For $\textbf{X}_0=(X_0,1+r^2)$, the Gaussian density ratio
\begin{equation}\aligned
\Th(\mathcal{T},\textbf{X}_0,r)=&\int_{Y\in T}\f1{(4\pi r^2)^{n/2}}e^{-\f{|Y-X_0|^2}{4r^2}}d\mu_T\ge\int_{Y\in T\cap B_{\r_1}(X_0)}\f1{(4\pi r^2)^{n/2}}e^{-\f{|Y-X_0|^2}{4r^2}}d\mu_T\\
=&\int_0^{\r_1}\f1{(4\pi r^2)^{n/2}}e^{-\f{\r^2}{4r^2}}\f{d}{d\r}\mathcal{H}^n(T\llcorner B_\r(X_0))d\r\\
=&\int_0^{\r_1}\f1{(4\pi r^2)^{n/2}}e^{-\f{\r^2}{4r^2}}\f{\r}{2r^2}\mathcal{H}^n(T\llcorner B_\r(X_0))d\r\\
\ge&\left(1+\f32\ep\right)\omega_n\int_0^{\r_1}\f1{(4\pi r^2)^{n/2}}e^{-\f{\r^2}{4r^2}}\f{\r}{2r^2}\r^nd\r\\
=&\left(1+\f32\ep\right)\f{\omega_n}{\pi^{n/2}}\int_0^{\f{\r_1^2}{4r^2}}t^{n/2}e^{-t}dt,
\endaligned
\end{equation}
where $\omega_n=\f{\pi^{n/2}}{\G(\f n2+1)}$. 
Then for any sufficiently small $r\in(0,\r_1)$ we have
$$\Th(\mathcal{T},\textbf{X}_0,r)>1+\ep,$$
which is in contradiction to \eqref{monoTXr}. Hence the inequality \eqref{dens} holds. By Allard's regularity theorem(see Theorem 24.2 in \cite{S} for instance) and the equation of self-expanders, we complete the proof.
\end{proof}
Remark. The area ratios in Theorem \ref{weaksmooth} can be obtained directly from estimates on the initial
Gaussian density ratios for small $r$ with the initial regular cone by \cite{K} or \cite{W05}, as the reviewer pointed out.

In the above theorem, we have obtained that
$t\in(0,\infty)\rightarrow \sqrt{t}\left(\mathrm{spt}T\right)$ is a smooth mean curvature flow on $\left(\R^{n+1}\setminus B_1\right)\times(0,\ep_1^2]$.
Furthermore, we have a more stronger convergence as follows.
\begin{lemma}
As $t\rightarrow0$, $\sqrt{t}T$ converges to the regular cone $C$ in $C^{3}$-norm.
\end{lemma}
\begin{proof}
From \eqref{ThTXrest**}, $\Th(\mathcal{T},\textbf{X},r)<1+\ep$ for any $0<r\le\sqrt{t}\le\sqrt{2}$, and any $\textbf{X}=(X,t)$ with $|X|\ge\f1{\ep'}$.
By White's regularity theorem \cite{W05}, we have
\begin{equation}\aligned\label{RfConA}
\lim_{|X|\rightarrow\infty}|A_T(X)|=0,
\endaligned
\end{equation}
where $A_{T}(X)$ is the second fundamental form of $\mathrm{spt}T$ at $X$.

If there is a sequence of points $\textbf{X}_i=(X_i,t_i)\in \sqrt{t_i}\left(\mathrm{spt}T\right)$ with $1\le |X_i|\le2$, $t_i\rightarrow0$ and $\lim_{i\rightarrow\infty}d_H\left(C,X_i/\sqrt{t_i}\right)>0$.
Here, $d_H$ denotes the Hausdorff distance.
Then for any $\ep>0$, for the sufficiently small $\ep>0$, we have
\begin{equation}\aligned
\int_{Z\in C-\f{X_i}{\sqrt{t_i}}}\f1{(4\pi )^{n/2}}e^{-\f{|Z|^2}{4}}<1-\ep.
\endaligned
\end{equation}
Combining \eqref{ThTXrest**}, we have
\begin{equation}\aligned\label{ThTXrest***}
1=\Th(\mathcal{T},\textbf{X}_i,r_i)\le\int_{Z\in C-\f{X_i}{\sqrt{t_i}}}\f1{(4\pi )^{n/2}}e^{-\f{|Z|^2}{4}}+\f{\ep}2\le1-\ep+\f{\ep}2=1-\f{\ep}2.
\endaligned
\end{equation}
It contradicts to $0<\ep<1$. Hence, one has
\begin{equation}\aligned
\lim_{r\rightarrow\infty}d_H\left(\left(\mathrm{spt}T\right)\cap B_{2r}\setminus B_r,C\cap B_{2r}\setminus B_r\right)=0.
\endaligned
\end{equation}
Let $\nu_{T}$ denote the unit normal vector of $T$ and $\nu_{C}$ denote the unit normal vector of $C$. Now we claim that
\begin{equation}\aligned\label{ConnuCnuT}
\lim_{|X|\rightarrow\infty}\sup_{\xi,\e\in B_1(X),\, \xi\in C,\, \e\in T}\left|\nu_C(\xi)-\nu_T(\e)\right|=0.
\endaligned
\end{equation}
Namely, for any compact set $K$, $\sqrt{t}\left(\mathrm{spt}T\right)\cap K$ converges to $C\cap K$ in $C^1$-norm.
Note that the cone $C$ is of $C^{3,\a}$ class. Analog to the step 1 in the proof of Theorem 1.1 in \cite{Wm} (with Schauder fixed-point theorem and Schauder estimates for linear parabolic equations), we get $\sqrt{t}T$ converging to the regular cone $C$ in $C^{3}$-norm.

Now let us show the claim \eqref{ConnuCnuT}. If \eqref{ConnuCnuT} fails, then there exist a sequence of points $X_i$ with $|X_i|\rightarrow\infty$ and a constant $\de_*>0$ such that
\begin{equation}\aligned\label{iConnuCnuT}
\lim_{i\rightarrow\infty}\sup_{\xi,\e\in B_1(X_i),\, \xi\in C,\, \e\in T}\left|\nu_C(\xi)-\nu_T(\e)\right|=\de_*.
\endaligned
\end{equation}
For any fixed $X\in\mathrm{spt} T$, we define a cylindrical domain $\mathbf{C}_{r}(X)$ by $\{y+s\nu_T(X)|\ y\bot\nu_T,\, |y-X|<10r/\de_*,\, |s|<\de_*r\}$ for all $r>0$ and $l\ge1$. Now we define a sequence Lipschitz function $\phi_i$ on $\R^{n+1}$ with Lipschitz constant $\le2/\de_*$ such that $\phi_i=1$ in $\mathbf{C}_{1}(X_i)$, $\phi_i=0$ in $\R^{n+1}\setminus\mathbf{C}_{2}(X_i)$. From the mean curvature flow of $\sqrt{t}\left(\mathrm{spt}T\right)$, we have
\begin{equation}\aligned
\int_{\sqrt{t}T}\phi_i-\int_{\sqrt{s}T}\phi_i\le&-\int_s^t\int_{\sqrt{\tau}T}|H_{\sqrt{\tau}T}|^2\phi_i d\mu_{\mathcal{T}_\tau}d\tau
+\int_s^t\int_{\sqrt{\tau}T}|H_{\sqrt{\tau}T}|\cdot|\bn\phi_i|d\mu_{\mathcal{T}_\tau}d\tau\\
\le&\f2{\de_*}\int_s^t\int_{\sqrt{\tau}T\cap \mathbf{C}_2(X_i)}|H_{\sqrt{\tau}T}|d\mu_{\mathcal{T}_\tau}d\tau
\endaligned
\end{equation}
for each $0<s<t$, where $H_{\sqrt{t}T}$ is the mean curvature of $\sqrt{t}T$.
From \eqref{RfConA}, there is a sequence $\tau_i\rightarrow0$ such that
$|H_{\sqrt{\tau}T}|\le \f{\tau_i}{\sqrt{\tau}}$ on $\sqrt{\tau}T\cap \mathbf{C}_2(X_i)$. With \eqref{ThTXrest**}, there is a constant $c$ depending only on $n$ and the cone $C$ such that
$\mathcal{H}^n\left(\sqrt{\tau}T\cap \mathbf{C}_2(X_i)\right)\le c\de_*^{-n}$. Then we get
\begin{equation}\aligned
\int_{\sqrt{t}T}\phi_i-\int_{\sqrt{s}T}\phi_i\le&\f2{\de_*}\int_s^t c\de_*^{-n}\f{\tau_i}{\sqrt{\tau}}d\tau\le 4c\tau_i\de_*^{-n-1}\sqrt{t},
\endaligned
\end{equation}
and letting $s\rightarrow0$ implies
\begin{equation}\aligned\label{TCtaui}
\int_{T}\phi_i\le\int_{C}\phi_i+4c\tau_i\de_*^{-n-1}.
\endaligned
\end{equation}
By the definition of $\phi_i$, we have $\int_{T}\phi_i\ge\omega_n(\f{10}{\de_*})^n$ for the sufficiently large $i\ge0$.
With \eqref{iConnuCnuT}, $\int_{C}\phi_i\le\f12\omega_n(\f{10}{\de_*})^n$ for the sufficiently large $i\ge0$. It's a contradiction to \eqref{TCtaui}.
Hence we complete the proof of \eqref{ConnuCnuT}, and then complete the proof of this lemma.
\end{proof}
From the above lemma, for the sufficiently small $\ep_1>0$,
there is a constant $c_{\ep_1}$ depending only on $n,\ep_1$ and the cone $C$ such that for any $\mathbf{X}=(X,t)\in\left(\overline{B_2}\setminus B_1\right)\times(0,\ep_1^2]$
$$|A_{\sqrt{t}T}(X)|\le c_{\ep_1},$$
where $A_{\sqrt{t}T}(X)$ is the second fundamental form of $\sqrt{t}\left(\mathrm{spt}T\right)$ at $X$. Namely, for any $r\ge\f1{\ep_1}$
\begin{equation}\aligned\label{nakA}
|A_T|\le \f{c_{\ep_1}}{r}
\endaligned
\end{equation}
on $\left(B_{2r}\setminus B_r\right)\cap \mathrm{spt}T$.
Set $A_C$ be the second fundamental form of $C\setminus\{0\}$. Then there is a constant $c>0$ such that the norm of the second fundamental form $|A_C(x)|\le\f{c}{|x|}$ at $x\in C\setminus\{0\}$. Let us recall that $c$ denotes a positive constant depending only on $n$ and the cone $C$ but will be allowed to change from line to line.

For any point $z\in \mathrm{spt}T$ we define a subset $E_z$ of $C$ by
$$\{\xi_z\in C\big|\, |z-\xi_z|=\mathrm{dist}(z,C)\triangleq\inf_{y\in C} |y-z|\}.$$
From $\f1r\left(\mathrm{spt}T\right)\rightarrow C$ locally in $C^3$-sense, $|\xi_z|\ge\f{|z|}2$ for the large $|z|>0$. Then we have
$$|z-\xi_z|\cdot|A_C(\xi_z)|\le c\f{\mathrm{dist}(z,C)}{|\xi_z|}\le 2c\f{\mathrm{dist}(z,C)}{|z|}$$
and
$$ \limsup_{z\in \mathrm{spt}T,|z|\rightarrow\infty}\left(\sup_{\xi_z\in E_z}|z-\xi_z|\cdot|A_C(\xi_z)|\right)\le 2c\limsup_{z\in \mathrm{spt}T,|z|\rightarrow\infty}\left(\f{\mathrm{dist}(z,C)}{|z|}\right)=0.$$
So there is a constant $R_1>\f1{\ep_1}$ such that for each $z\in \mathrm{spt}T\setminus B_{R_1}$, $E_z$ includes only one point denoted by $z_{_C}$(see also the proof of Lemma 2.3 in \cite{W}). Hence we can define a function $f$ by
\begin{equation}\aligned\label{fXy}
z_{_C}+f(z_{_C})\nu_C=z,
\endaligned
\end{equation}
where $\nu_C$ is the unit normal vector of $C$ at $z_{_C}$ pointing to $z$. Since $\f1r\left(\mathrm{spt}T\right)$ converges to the cone $C$ in $C^3$-sense on $\overline{B_4}\setminus B_{\f12}$ as $r\rightarrow\infty$,
\begin{equation}\aligned\label{na0123finfty}
\limsup_{y\in C,|y|\rightarrow\infty}\left(\sum_{i=0}^3|y|^{i-1}|\na^i_Cf(y)|\right)=0.
\endaligned
\end{equation}
So there is a constant $R_2>R_1$ such that $f$ satisfies
\begin{equation}\aligned\label{na01fep2}
\sum_{i=0}^3|y|^{i-1}|\na^i_Cf(y)|\le \f1{100}
\endaligned
\end{equation}
for every $y\in C\setminus B_{R_2}$. Furthermore, we have the following estimates.
\begin{lemma}\label{morepresice01order}
There is a constant $R_3\ge R_2$ depending on $n,C$ and $\sup_{x\in\p T}|x|$, such that at $y\in C\setminus B_{R_3}$,
\begin{equation}\aligned\label{naCf518}
|\na_C^{i}f(y)|\le c|y|^{-i-1}\qquad \mathrm{for}\ i=0,1.
\endaligned
\end{equation}
\end{lemma}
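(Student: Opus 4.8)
The plan is to represent $\mathrm{spt}T$, outside a large ball, as the normal graph over the minimal cone $C$ with height function $f$ as in \eqref{fXy}, and to extract from the self-expander equation \eqref{SE} the quasilinear elliptic equation that $f$ satisfies over $C$. Writing $r=|y|$ and using that $C$ is a cone (so the position field equals $r\partial_r$, is tangent to $C$, and $\lan y,\nu_C\ran=0$, with $C$ minimal since it is the blow-down of a self-expander), the equation takes the homogeneous quasilinear form $Lf=0$, where $L$ has no additive forcing term because both $H_M$ and $\f12\lan X,\nu\ran$ vanish identically at $f\equiv0$. Freezing the coefficients along $f$, the operator $L$ is, by the crude bounds \eqref{na01fep2}, a $\f1{100}$-perturbation of
$$\mathcal{L}_0 f:=\Delta_C f+|A_C|^2f+\f{r}2\partial_r f-\f12 f.$$
Two structural features drive the proof: since $|A_C|^2=|A_\Si|^2r^{-2}\to0$, the zeroth order coefficient of $L$ tends to $-\f12<0$, so $L$ obeys the maximum principle on $\{r>R_3\}$ for $R_3$ large; and the self-expander drift $\f{r}2\partial_r$ is present and dominant among the first order terms.

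For the case $i=0$ I would argue by comparison. Direct computation gives $\mathcal{L}_0(r^{-1})=-r^{-1}+O(r^{-3})$ and $\mathcal{L}_0(r)=(n-1+|A_\Si|^2)r^{-1}>0$, and the same signs persist for $L$; thus $r^{-1}$ is a supersolution and $r$ a subsolution for large $r$. For fixed small $\eta>0$ set $w=Br^{-1}+\eta r$ on the annulus $\{R_3<r<R\}$. Choosing $B\ge R_3^2/100$ forces $w\ge|f|$ on the inner sphere (where \eqref{na01fep2} gives $|f|\le R_3/100$), while \eqref{na0123finfty} gives $f=o(r)$, so $w\ge|f|$ on the outer sphere once $R$ is large. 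Because $B\gg\eta$ and both $L(r^{-1})$ and $L(r)$ are of size $r^{-1}$, $w$ is a supersolution, and since $Lf=0$ the maximum principle yields $|f|\le w$ throughout. Fixing $y$, letting $R\to\infty$ and then $\eta\to0$ gives $|f(y)|\le B|y|^{-1}$, the case $i=0$ of \eqref{naCf518}, with $c=B$ depending on $R_3$, hence on $n,C$ and $\sup_{x\in\partial T}|x|$.

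For $i=1$ I would treat the radial and angular parts of $\na_C f$ separately, again exploiting the drift. Since $\f{r}2\partial_r f$ is the dominant term carrying $\partial_r f$, I can solve the equation algebraically for it, obtaining $\partial_r f=\f1r f-\f2r\Delta_C f+(\text{lower order})$; inserting the $i=0$ bound $|f|\le c|y|^{-1}$ and the crude Hessian bound $|\na^2 f|\le\f1{100}r^{-1}$ from \eqref{na01fep2} gives at once $|\partial_r f|\le c\,r^{-2}$. For the angular directions I would differentiate the equation along a tangential field $\partial_\omega$ on the cross section $\Si$ (which commutes with $r\partial_r$); then $\partial_\omega f$ satisfies $\mathcal{L}_0(\partial_\omega f)=E'$ with the same operator $\mathcal{L}_0$, still with negative potential, where $E'$ gathers the commutator of $\partial_\omega$ with $\Delta_C$ and the $\partial_\omega$-derivatives of the coefficients and is of size $O(r^{-1})$ by \eqref{na01fep2}. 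Noting $\partial_\omega f=o(r)$ from \eqref{na0123finfty}, a comparison of $\partial_\omega f$ against the barrier $B'r^{-1}+\eta r$ exactly as above gives $|\partial_\omega f|\le c\,r^{-1}$, i.e. the angular part of $\na_C f$ is $O(r^{-2})$. Combined with the radial bound this is $|\na_C f|\le c|y|^{-2}$.

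The main obstacle is precisely the drift $\f{r}2\partial_r f$. On the annulus $\{|y|\sim\rho\}$ its rescaled strength is of order $\rho^2$, so it is supercritical for the usual interior Schauder and gradient estimates: one cannot reach the sharp rate $r^{-2}$ merely by feeding $|f|\le c|y|^{-1}$ into off-the-shelf elliptic estimates, which would only return an $O(1)$ gradient bound. This is why the gradient must be extracted from the equation itself — algebraically in the radial direction, and through a second barrier for the differentiated equation in the angular directions — the negativity of the effective potential created by this drift being exactly what makes every comparison step run. The one place demanding careful (but routine) bookkeeping is the verification, via \eqref{na01fep2} and \eqref{nakA}, that the perturbation of $\mathcal{L}_0$ and the right-hand side $E'$ are genuinely of the orders claimed.
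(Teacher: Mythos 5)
Your route is genuinely different from the paper's and is largely workable, but it contains one outright error and one circularity hazard. The error: the blow-down cone of a self-expander need \emph{not} be minimal, so your claim that the equation for $f$ is forcing-free (``$Lf=0$ because $H_M$ vanishes at $f\equiv0$, $C$ being minimal as the blow-down of a self-expander'') is false in the generality of this lemma. Here $C$ is merely an embedded cone, and this generality is used later: Lemma \ref{AsmPMC} applies \eqref{naCf518} to cones with $H_\Si>0$, and Corollary \ref{posiSE} produces smooth self-expanders asymptotic to such non-minimal cones (the rotational examples \eqref{Mk} already show the blow-down can have $H_C\neq0$). The correct equation carries the inhomogeneity $H_C=H_\Si/r=O(r^{-1})$ as in \eqref{LCDeCACQf}; in fact the rate $|f|\le c|y|^{-1}$ you are proving is exactly the rate dictated by $H_C$, as the refinement $f=H_\Si/r+O(r^{-3})$ of Lemma \ref{AsmPMC} shows. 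Fortunately your scheme survives the repair: since $L(Br^{-1})\approx-Br^{-1}$, enlarging $B$ by a multiple of $\sup_\Si H_\Si$ absorbs the forcing in the $i=0$ comparison, and in the differentiated equation $\partial_\omega H_C=O(r^{-1})$ is absorbed the same way, while your radial step picks up only the harmless extra term $\f2r H_C=O(r^{-2})$. The circularity hazard: you may not quote the linearization with error $|Q|\le c|x|^{-3}\left(|x|\cdot|f|+|\na_Cf|\right)$, because the paper proves that bound (Lemma \ref{5.4Asp}, estimate \eqref{5.44Q3}) \emph{using} the present lemma; you must rederive a crude version of the graph equation over $C$ from \eqref{na01fep2} and \eqref{na0123finfty} alone, treating the quadratic terms as small perturbations of the potential and drift. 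This is doable, and you flag the bookkeeping yourself, but it is a real obligation, not a formality.

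The paper's own proof sidesteps both issues by never linearizing over $C$: near the ray $\{tx_0\}$ it writes $\mathrm{spt}T$ as a graph $u$ over the hyperplane $P$ tangent to $C$ at $x_0$ and passing through the origin, so the \emph{exact} graphic expander equation \eqref{Graphu} holds and identifies the scaling derivative, $-x\cdot Du+u=2\sum g^{ij}u_{ij}=O(|x|^{-1})$ by the curvature decay \eqref{nakA}; integrating $\f{d}{dt}\left(t^{-1}u(tx_0)\right)$ down from $t=\infty$, where \eqref{na0123finfty} gives decay, yields $|f(x_0)|\le c|x_0|^{-1}$, and differentiating \eqref{Graphu} once gives $|x\cdot Du_k|\le c|x|^{-2}$, which integrates along the ray to $|\na_Cf(x_0)|\le c|x_0|^{-2}$. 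No maximum principle, no barriers, and the forcing question never arises because the reference hyperplane contains the origin. Your ``solve the equation algebraically for $\partial_rf$'' step is morally the same exploitation of the dominant drift, but done pointwise rather than integrated along rays; your barrier scheme is closer in spirit to how the paper proves the \emph{subsequent} refinements (Lemmas \ref{lowgse}, \ref{upgse}, \ref{AsmPMC}), whereas the ray-integration argument is precisely what lets the paper start that machinery without circularity.
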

\begin{proof}
For any $x_0\in C\setminus B_{R_2}$, there is a unique hyperplane $P\subset\R^{n+1}$ through $0,x_0$ such that $P$ is tangent to $C$ at $x_0$. In a small tubular neighborhood of the radial line $\{tx_0|\ t\ge R_3\}$ for a sufficiently large constant $R_3\ge R_2$, $\mathrm{spt}T$ can be seen as a graph over an open set $P_0\subset P$ with the graphic function $u$ and $tx_0\in P_0$ for all $t\ge R_3$. From \eqref{na01fep2}, one has
\begin{equation}\aligned\label{na01uep2}
|u(x)|+|x|\cdot|Du(x)|\le\f1{50}|x|
\endaligned
\end{equation}
and
\begin{equation}\aligned\label{na123uep2}
|D^{i+1}u(x)|\le c|x|^{-i}
\endaligned
\end{equation}
for every $x\in P_0$, and $1\le i\le2$. Clearly, $u(x_0)=f(x_0)$. The normal vector to $T$ at $x_0$ is
$$\f{-\na_Cf(x_0)+\nu_C(x_0)}{\sqrt{1+|\na_Cf(x_0)|^2}}=\f{-Du(x_0)+\nu_C(x_0)}{\sqrt{1+|Du(x_0)|^2}},$$
which implies $\na_Cf(x_0)=Du(x_0)$. Hence from \eqref{na0123finfty} we obtain
\begin{equation}\aligned\label{tutx01}
\lim_{t\rightarrow\infty}\big(t^{-1}|u(tx_0)|+|x_0|\cdot|Du(tx_0)|\big)=0,
\endaligned
\end{equation}
and then
\begin{equation}\aligned
\f{u(x_0)}{|x_0|}=-\int_1^\infty\f{\p}{\p t}\left(\f{u(tx_0)}{t|x_0|}\right)dt=\int_1^\infty\f{-tx_0\cdot Du(tx_0)+u(tx_0)}{t^2|x_0|}dt.
\endaligned
\end{equation}
Combining \eqref{Graphu}, \eqref{na01uep2} and \eqref{na123uep2} one gets
\begin{equation}\aligned
|u(x_0)|\le&\int_1^\infty\f{\left|-tx_0\cdot Du(tx_0)+u(tx_0)\right|}{t^2}dt=\f12\int_1^\infty t^{-2}\left|\sum_{i,j}g^{ij}u_{ij}\Big|_{tx_0}\right|dt\\
\le&\f{\sqrt{n}}2\int_1^\infty t^{-2}\left|D^2u(tx_0)\right|dt\le\f{\sqrt{n}}2\int_1^\infty t^{-2}\f{c}{t|x_0|}dt=\f{\sqrt{n}c}{4|x_0|}.
\endaligned
\end{equation}
Taking derivative on $x_k$ for \eqref{Graphu}, we obtain
\begin{equation}\aligned
2\p_{x_k}\left(\sum_{i,j}g^{ij}u_{ij}\right)=-\sum_ix_iu_{ik}.
\endaligned
\end{equation}
Then by \eqref{na01uep2}\eqref{na123uep2}, we have
\begin{equation}\aligned
\left|\sum_ix_iu_{ik}\right|\le c|x|^{-2}.
\endaligned
\end{equation}
Hence
\begin{equation}\aligned
\left|u_k(x_0)\right|=&\left|\int_1^\infty\f{\p}{\p t}u_k(tx_0)dt\right|=\left|\int_1^\infty\sum_ix_0\cdot Du_{k}(tx_0)dt\right|\\
\le&\int_1^\infty\f{c}{t^3|x_0|^2}dt=\f{c}{2|x_0|^2}.
\endaligned
\end{equation}
We complete the proof.
\end{proof}

Let $S$ be a scaling $n$-dimensional smooth self-expander outside a compact set, whose mean curvature
\begin{equation}\aligned\label{Sp}
H_S=p\lan X,\nu\ran
\endaligned
\end{equation}
for some constant $p\in(0,1]$. Suppose that the tangent cone of $\llbracket S\rrbracket$ at infinity is also $\llbracket C\rrbracket$. Similarly, we can define a function $\tilde{f}$ on $S\setminus B_{\f12r_{_S}}$ such that $\mathrm{spt}T$ is a graph over $S\setminus B_{\f12r_{_S}}$ with the graphic function $\tilde{f}$ outside a compact set. Here, $r_{_S}\ge4R_3$ is a positive constant depending only on $n,C$ and $\sup_{x\in\p S}|x|$. For any $z\in \mathrm{spt}T\setminus B_{\f23r_{_S}}$ there is a unique point $z_{_S}\in S$ such that $\tilde{f}(z_{_S})=|z-z_{_S}|=\mathrm{dist}(z,S)\triangleq\inf_{y\in S}|z-y|$. Then $z= z_{_S}+\tilde{f}(z_{_S})\nu_S$ with the normal vector $\nu_S$ of $S$ at $z_{_S}$. Further,
\begin{equation}\aligned\label{YStf0}
\limsup_{z_{_S}\in S,|z_{_S}|\rightarrow\infty}\left(\sum_{i=0}^2|z_{_S}|^{i-1}\left|\na^i_S\tilde{f}(z_{_S})\right|\right)=0,
\endaligned
\end{equation}
where $\na_S$ is the Levi-Civita connection of $S$.

Assume that $S$ is a graph over $C\setminus B_{\f14r_{_S}}$ with the graphic function $\zeta$ outside a compact set.
There is a point $z_{_C}\in C$ such that $z_{_C}+f(z_{_C})\nu_C=z$. By the definition of $\tilde{f}$ we have
\begin{equation}\aligned\label{YStf1}
\tilde{f}(z_{_S})\le |z-z_{_C}-\zeta(z_{_C})\nu_C|\le|z-z_{_C}|+|\zeta(z_{_C})|\le c|z_{_S}|^{-1}.
\endaligned
\end{equation}
There exists a point $\hat{z}_{_C}\in C$ such that $\hat{z}_{_C}+\z(\hat{z}_{_C})\nu_C=z_S$. The slopes of $f$ at $z_{_C}$ and $\z$ at $\hat{z}_{_C}$ are both bounded by $c|z_{_C}|^{-2}$. Let $\th_{z}$ denote the angle between the tangent space of $S$ at $z_{_S}$, and the tangent space of $T$ at $z$, then $|\th_z|\le c|z_{_S}|^{-2}$. Hence the slope of $\tilde{f}$ at $z_{_S}$ is bounded by $c|z_{_S}|^{-2}$, namely,
\begin{equation}\aligned\label{YStf2}
|\na_S\tilde{f}(z_{_S})|\le c|z_{_S}|^{-2}\qquad \mathrm{on}\ \ S\setminus B_{r_{_S}}.
\endaligned
\end{equation}

\subsection{Linearisation and the Jacobi field operator on self-expanders}
For any smooth hypersurface $S\subset\R^{n+1}$ (maybe with boundary) and any $C^2$-function $\mathcal{F}$ on $S$, we define
\begin{equation}\aligned\label{LSFSE}
L_S\mathcal{F}\triangleq\De_S\mathcal{F}+\f12\lan x,\na_S\mathcal{F}\ran+\left(|A_S|^2-\f12\right)\mathcal{F}
\endaligned
\end{equation}
at $x\in S$, where $\De_S$, $\na_S$, $A_S$ and $H_S$ are Laplacian, the Levi-Civita connection, the second fundamental form and mean curvature of $S$ in $\R^{n+1}$, respectively.

Let $\{e_i\}_{i=1}^n$ be an orthonormal basis at any considered point of $C\setminus\{0\}$, we define the mean curvature of $C$ by $H_C=\lan\overline{\na}_{e_i}e_i,\nu_C\ran$.
Analog to self-shrinkers (Lemma 2.4 of L. Wang \cite{W}), we obtain the following lemma.
\begin{lemma}\label{5.4Asp}
There are constants $R_4,c>0$ depending only on $n,C$ and $\sup_{x\in\p T}|x|$, such that at $x\in C\setminus B_{R_4}$,
\begin{equation}\aligned\label{LCDeCACQf}
L_Cf+H_C=Q_{C}(x,f,\na_Cf,\na^2_Cf),
\endaligned
\end{equation}
where $L_Cf$ is defined by \eqref{LSFSE} and the function $Q_{C}(x,f,\na_Cf,\na^2_Cf)$ satisfies
\begin{equation}\aligned
|Q_{C}(x,f,\na_Cf,\na^2_Cf)|\le c|x|^{-3}\left(|x|\cdot|f|+|\na_Cf|\right).
\endaligned
\end{equation}
\end{lemma}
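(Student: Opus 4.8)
The plan is to write $\mathrm{spt}T$ near infinity as the normal graph $z=z_{_C}+f(z_{_C})\nu_C$ over $C$ set up in \eqref{fXy}, and to read the self-expander equation \eqref{SE} in its scalar form $H_T=\f12\lan X,\nu_T\ran$ as a second order equation for $f$ on $C$. The one geometric fact I would use repeatedly is that for a cone the position vector is tangent to $C$, so $\lan x,\nu_C\ran=0$ at every $x\in C\setminus\{0\}$. I would then expand both sides of $H_T=\f12\lan X,\nu_T\ran$ about $f=0$, collect the terms that are linear in $f$ (together with the zeroth order term $H_C$) into $L_Cf+H_C$, and define $Q_C$ to be everything that remains.

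For the left-hand side I would use the first variation of mean curvature under a normal perturbation, formula \eqref{Hnusp0}, which yields the expansion $H_T=H_C+\big(\De_Cf+|A_C|^2f\big)+R_1$, where $R_1$ is the (at least quadratic) Taylor remainder of the normal-graph mean curvature operator over the curved base $C$. For the right-hand side, writing $X=x+f\nu_C$ with $x=z_{_C}$ and using $\nu_T=\nu_C-\na_Cf+(\text{higher order})$, the two facts $\lan x,\nu_C\ran=0$ and $\lan\nu_C,\na_Cf\ran=0$ give $\f12\lan X,\nu_T\ran=\f12 f-\f12\lan x,\na_Cf\ran+R_2$, where $R_2$ collects the quadratic and higher corrections coming from the normalization of $\nu_T$ and from the $f$-dependence of the induced metric. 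Equating the two expansions and rearranging produces exactly
\[
\De_Cf+\f12\lan x,\na_Cf\ran+\Big(|A_C|^2-\f12\Big)f+H_C=R_2-R_1,
\]
so that $L_Cf+H_C=Q_C$ with $Q_C\triangleq R_2-R_1$ and $L_C$ as in \eqref{LSFSE}.

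It remains to estimate $Q_C$. Each term of $R_1$ and $R_2$ is a product of at least two factors drawn from $\{f,\na_Cf,\na_C^2f\}$ together with geometric coefficients built from $A_C$ and $\na_CA_C$; the scale invariance of the cone gives $|A_C(x)|\le c|x|^{-1}$ and $|\na_CA_C(x)|\le c|x|^{-2}$. I would then bound each term by factoring out a single copy of $f$ or of $\na_Cf$ and estimating the remaining factors with the a priori decay already established, namely $|f|\le c|x|^{-1}$ and $|\na_Cf|\le c|x|^{-2}$ from Lemma \ref{morepresice01order} and $|\na_C^2f|\le c|x|^{-1}$ from \eqref{na01fep2}. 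For instance the mean-curvature remainders $A_C(\na_Cf)^2$ and $(\na_C^2f)(\na_Cf)^2$ are each at most $c|x|^{-3}|\na_Cf|$, while the metric-correction terms such as $A_Cf\,\na_C^2f$ and the normal-tilt term $\lan x,fA_C\na_Cf\ran$ coming from $R_2$ are each at most $c|x|^{-2}|f|=c|x|^{-3}\big(|x|\,|f|\big)$; summing gives $|Q_C|\le c|x|^{-3}(|x|\,|f|+|\na_Cf|)$ on $C\setminus B_{R_4}$ for $R_4\ge R_3$ large.

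The main obstacle is the bookkeeping in the second and third steps: writing down the exact nonlinear expansion of the mean curvature and of $\lan X,\nu_T\ran$ for a normal graph over the \emph{curved} base $C$ (so that the induced metric, the Weingarten relation $\bn_{e_i}\nu_C=-A_{ij}e_j$, and the normalization of $\nu_T$ all contribute cross terms), and then, term by term, choosing whether to extract a factor $f$ (absorbed into the $|x|\,|f|$ part) or a factor $\na_Cf$ (absorbed into the $|\na_Cf|$ part) so that the leftover factors carry exactly the weight $c|x|^{-3}$. This is where the conical decay of $A_C$ and the sharp orders $|f|=O(|x|^{-1})$, $|\na_Cf|=O(|x|^{-2})$ are essential; the computation is otherwise routine and parallels Lemma 2.4 of \cite{W} for self-shrinkers.
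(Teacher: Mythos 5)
Your proposal is correct and follows essentially the same route as the paper: both write $\mathrm{spt}T$ as a normal graph over $C$ via \eqref{fXy}, expand the self-expander equation so the linear part produces $L_Cf+H_C$ (using $\lan x,\nu_C\ran=0$ on the cone), and bound the remainder by combining the conical decay $|A_C|\le c|x|^{-1}$, $|\na_CA_C|\le c|x|^{-2}$ with the a priori estimates $|f|\le c|x|^{-1}$, $|\na_Cf|\le c|x|^{-2}$ from Lemma \ref{morepresice01order} and $|\na_C^2f|\le c|x|^{-1}$ from \eqref{na01fep2}, exactly paralleling Lemma 2.4 of \cite{W}. The only cosmetic difference is that the paper carries out the bookkeeping you defer in explicit adapted coordinates, pairing the unnormalized normal $\mathbf{N}$ with $\De_T\widetilde{F}=\f12\widetilde{F}^N$ to avoid normalizing $\nu_T$, whereas you phrase the same computation via the first-variation formula \eqref{Hnusp0} and a Taylor remainder; your sample term-by-term bounds are consistent with the paper's $Q_0$, $Q_{1ij}$, $Q_{2ij}$ estimates.
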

\begin{proof}
In a neighborhood of $x_0\in C$ with $|x_0|> R_4$, there is a domain $U_0$ including 0 in $\R^n$ so that we can choose a local parametrization of $C$, and a map $F:\ U_0\rightarrow C$ such that $F(0)=x_0$, $\lan\p_iF(0),\p_jF(0)\ran=\de_{ij}$ and $\p_{ij}^2F(0)=h_{ij}\mathbf{n}(x_0)$ with $h_{ij}=\lan\overline{\na}_{\p_iF}\p_jF,\mathbf{n}\ran$ and $h_{ij}(0)=0$ if $i\neq j$. Here $\mathbf{n}(x_0)=\nu_C(x_0)$. Then we have $\p_i\mathbf{n}=-\sum_jh_{ij}\p_jF$. In a neighborhood of $y_0=x_0+f(x_0)\mathbf{n}(x_0)$, there is a local parameterization of $M$, $\widetilde{F}:\ U_0\rightarrow M$ such that
$$\widetilde{F}(p)=F(p)+f(p)\mathbf{n}(p),$$
for $p\in U_0$. Here, one has identified $f(p)$ and $\mathbf{n}(p)$ with $f(F(p))$ and $\mathbf{n}(F(p))$ as in \cite{W}, respectively. For avoiding confusion, we use the notation $\mathbf{n}$ instead of $\nu_C$ in this proof. At $p=0\in U_0$, we have
\begin{equation}\aligned
\p_i\widetilde{F}=\p_iF+(\p_if)\mathbf{n}+f\p_i\mathbf{n}=\p_iF+(\p_if)\mathbf{n}-f\sum_jh_{ij}\p_jF=(1-h_{ii}f)\p_iF+(\p_if)\mathbf{n},
\endaligned
\end{equation}
and the normal vector to $M$ at $\widetilde{F}(0)$
\begin{equation}\aligned
\mathbf{N}=-\sum_k\left(\prod_{l\neq k}(1-h_{ll}f)\right)(\p_kf)\p_kF+\prod_k(1-h_{kk}f)\mathbf{n}.
\endaligned
\end{equation}
By the proof of Lemma 2.4 in \cite{W}, we get at $p=0$
\begin{equation}\aligned
\left\lan \widetilde{F},\mathbf{N}\right\ran=-\sum_k\left(\prod_{l\neq k}(1-h_{ll}f)\right)\lan F,\p_kF\ran(\p_kf)+\left(\lan F,\mathbf{n}\ran+f\right)\prod_k(1-h_{kk}f),
\endaligned
\end{equation}
and
\begin{equation}\aligned
\Big\lan &\p_{ij}^2\widetilde{F},\mathbf{N}\Big\ran=h_{ii}(\p_if)(\p_jf)\prod_{k\neq i}(1-h_{kk}f)+h_{jj}(\p_if)(\p_jf)\prod_{k\neq j}(1-h_{kk}f)\\
&+\left(h_{ij}-h_{ii}h_{jj}\de_{ij}f+\p_{ij}^2f\right)\prod_k(1-h_{kk}f)+f\sum_k\left(\prod_{l\neq k}(1-h_{ll}f)\right)(\p_jh_{ik})(\p_kf).
\endaligned
\end{equation}
Note that $\lan F(0),\mathbf{n}\ran=\lan x_0,\mathbf{n}\ran=0$ as $C$ is a cone.
By \eqref{naCf518}, clearly we have
\begin{equation}\aligned\label{hFN}
\left\lan \widetilde{F},\mathbf{N}\right\ran=\left(f-\sum_k\lan F,\p_kF\ran(\p_kf)\right)\prod_k(1-h_{kk}f)+Q_0(x,f,\na_Cf),
\endaligned
\end{equation}
and
\begin{equation}\aligned\label{hFijN}
&\left\lan \p_{ij}^2\widetilde{F},\mathbf{N}\right\ran=\left(h_{ij}-h_{ii}h_{jj}\de_{ij}f+\p_{ij}^2f\right)\prod_k(1-h_{kk}f)+Q_{1ij}(x,f,\na_Cf),
\endaligned
\end{equation}
where $|Q_{0}(x,f,\na_Cf)|\le c|f|\cdot|\na_Cf|$, $|Q_{1ij}(x,f,\na_Cf)|\le c|x_0|^{-1}|\na_Cf|^2+c|x_0|^{-2}|f|\cdot|\na_Cf|$.
Let $g=g_{ij}dx_idx_j$ be the metric of $M$ with 1-form $dx_i$ on $U_0$. Then
\begin{equation*}\aligned
g_{ij}=&\lan\p_i\widetilde{F},\p_j\widetilde{F}\ran=(1-h_{ii}f)(1-h_{jj}f)\de_{ij}+(\p_if)(\p_jf)\\
=&\f{\de_{ij}}{1+2h_{ii}f}+\left(1-\f{4}{1+2h_{ii}f}\right)h^2_{ii}f^2\de_{ij}+(\p_if)(\p_jf).
\endaligned
\end{equation*}
Hence
\begin{equation}\aligned\label{gupijQ2ij}
g^{ij}=&\de_{ij}(1+2h_{ii}f)+Q_{2ij}(x,f,\na_Cf),
\endaligned
\end{equation}
where $|Q_{2ij}(x,f,\na_Cf)|\le c|x_0|^{-2}f^2+c|\na_Cf|^2$ by \eqref{naCf518}. Since $\mathrm{spt}T\setminus B_{R_1}$ is a smooth self-expander, then
\begin{equation}\aligned
\left\lan\De_T\widetilde{F},\mathbf{N}\right\ran=\f12\left\lan \widetilde{F},\mathbf{N}\right\ran.
\endaligned
\end{equation}
Namely,
\begin{equation}\aligned\label{gupijhFijN}
\sum_{i,j}g^{ij}\left\lan\p_{ij}^2\widetilde{F},\mathbf{N}\right\ran=\f12\left\lan \widetilde{F},\mathbf{N}\right\ran.
\endaligned
\end{equation}
Combining \eqref{hFN}-\eqref{gupijQ2ij},\eqref{gupijhFijN}, we obtain
\begin{equation}\aligned
\sum_{k}\p_{kk}^2f+\f12\sum_k\lan F,\p_kF\ran(\p_kf)+\left(|A_C|^2-\f12\right)f+H_C+Q_3(x,f,\na_Cf,\na_C^2f)=0,
\endaligned
\end{equation}
and
\begin{equation}\aligned\label{5.44Q3}
|Q_3(x,f,\na_Cf,\na_C^2f)|\le& c\big(Q_0(x,f,\na_Cf)+Q_{1ij}(x,f,\na_Cf)\\
&+Q_{2ij}(x,f,\na_Cf)\left(|\na_C^2f|+|x_0|^{-1}\right)+|x_0|^{-1}f|\na_C^2f|\big)\\
\le&c\left(Q_0(x,f,\na_Cf)+Q_{1ij}(x,f,\na_Cf)+|x_0|^{-2}|f|\right)\\
\le&c|x_0|^{-3}\left(|x_0|\cdot|f|+|\na_Cf|\right),
\endaligned
\end{equation}
where we have used \eqref{na01fep2} and \eqref{naCf518} in the above inequality.
By
\begin{equation}\aligned
\sum_{k}\p_{kk}^2f(p)=&\sum_{k}\p_{kk}^2f(F(p))=\sum_k\p_k\left\lan\na_Cf,\p_kF\right\ran\\
=&\De_Cf+\sum_k\left\lan\na_Cf,\p_{kk}^2F\right\ran=\De_Cf+\sum_k\left\lan\na_Cf,h_{kk}\mathbf{n}\right\ran=\De_Cf,
\endaligned
\end{equation}
we complete the proof.
\end{proof}

Analog to the computation in the proof of Theorem \ref{LCDeCACQf}, from \eqref{YStf0}-\eqref{YStf2} we get the following corollary.
\begin{corollary}\label{sosse}
Let $S$ be a scaling $n$-dimensional smooth self-expander satisfying \eqref{Sp} for some constant $p\in(0,1]$ outside a compact set.
If a self-expander $T$ can be represented as a smooth graph over $S$ with the smooth graphic function $f$ outside a compact set, and with the same $C^{3,\a}$-regular tangent cone $C$ at infinity as $S$. Then there is a constant $\tilde{r}_{_S}>r_{_S}>0$, such that at $x\in S\setminus B_{\tilde{r}_{_S}}$,
\begin{equation}\aligned
L_Sf+\left(1-\f1{2p}\right)H_{S}=Q_{S}(x,f,\na_Sf,\na^2_Sf),
\endaligned
\end{equation}
where the function $Q_{S}(x,f,\na_Sf,\na^2_Sf)$ satisfies
\begin{equation}\aligned
|Q_{S}(x,f,\na_Sf,\na^2_Sf)|\le c|x|^{-3}\left(|x|\cdot|f|+|\na_Sf|\right).
\endaligned
\end{equation}
\end{corollary}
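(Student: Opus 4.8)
The plan is to run the local computation from the proof of Lemma~\ref{5.4Asp} with the cone $C$ replaced throughout by the base self-expander $S$. Concretely, I would fix $x_0\in S$ with $|x_0|$ large, choose a parametrization $F\colon U_0\to S$ with $F(0)=x_0$, $\lan\p_iF(0),\p_jF(0)\ran=\de_{ij}$ and $\p_{ij}^2F(0)=h_{ij}\mathbf n$ with $(h_{ij})$ diagonal at $0$, where now $h_{ij}$ (equivalently $A_S$) denotes the second fundamental form of $S$ and $\mathbf n=\nu_S$, and then write $\mathrm{spt}T$ as the normal graph $\widetilde F(p)=F(p)+f(p)\mathbf n(p)$ with unit normal $\mathbf N$ of $T$. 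The bounds \eqref{YStf1} and \eqref{YStf2} furnish $|f|\le c|x|^{-1}$ and $|\na_Sf|\le c|x|^{-2}$ --- the exact replacements for what Lemma~\ref{morepresice01order} supplied in the cone case --- while, since $S$ carries the same tangent cone $C$ at infinity, one has $|A_S|\le c|x|^{-1}$ with correspondingly decaying covariant derivatives. With these in hand the Hessian expansion \eqref{hFijN} and the metric expansion \eqref{gupijQ2ij} transcribe verbatim, and the identity $\sum_k\lan\na_Sf,\p_{kk}^2F\ran=\lan\na_Sf,H_S\mathbf n\ran=0$ again converts $\sum_k\p_{kk}^2f$ into $\De_Sf$.

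The one genuinely new input is the normal component of the position vector: in the cone case one used $\lan F,\mathbf n\ran=\lan x_0,\nu_C\ran=0$, whereas here \eqref{Sp} gives $\lan X,\nu_S\ran=p^{-1}H_S$. First I would substitute this into the unsimplified expression for $\lan\widetilde F,\mathbf N\ran$ (the one preceding \eqref{hFN}), whose leading part thereby becomes $p^{-1}H_S+f-\lan X,\na_Sf\ran$ instead of $f-\lan X,\na_Sf\ran$. Next I would contract \eqref{hFijN} against $g^{ij}$ from \eqref{gupijQ2ij}, whose principal contribution is $H_S+\De_Sf+|A_S|^2f$ exactly as before. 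Inserting both sides into the self-expander equation $\sum_{i,j}g^{ij}\lan\p_{ij}^2\widetilde F,\mathbf N\ran=\f12\lan\widetilde F,\mathbf N\ran$ for $T$ and collecting principal terms yields
\begin{equation*}
\De_Sf+\f12\lan X,\na_Sf\ran+\left(|A_S|^2-\f12\right)f+H_S-\f12\lan X,\nu_S\ran=Q_S.
\end{equation*}
Since $\f12\lan X,\nu_S\ran=\f1{2p}H_S$, the zeroth-order term is $H_S-\f1{2p}H_S=\left(1-\f1{2p}\right)H_S$, so by \eqref{LSFSE} the left-hand side is exactly $L_Sf+\left(1-\f1{2p}\right)H_S$, as claimed; note that for $p=\f12$ (so that $S$ is itself a genuine self-expander) this term vanishes, as it must.

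For the remainder $Q_S$ I would collect every term discarded above --- the analogues of $Q_0$, $Q_{1ij}$, $Q_{2ij}$ appearing in the proof of Lemma~\ref{5.4Asp}, together with the new products generated by the nonzero factor $p^{-1}H_S$ --- each of the schematic shape $|A_S|\,|f|\,|\na_Sf|$, $|A_S|^2f^2$, $|\na_Sf|^2$, $|x|^{-1}|f|\,|\na_S^2f|$, or $p^{-1}H_S\,h_{kk}f$. Bounding them exactly through the chain \eqref{5.44Q3}, using $|A_S|\le c|x|^{-1}$ and \eqref{YStf0}--\eqref{YStf2}, shows that each is $O\!\left(|x|^{-3}\bigl(|x|\,|f|+|\na_Sf|\bigr)\right)$, which is the asserted estimate.

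The hard part will be bookkeeping rather than anything conceptual: one must verify that the now non-vanishing factor $\lan X,\nu_S\ran=p^{-1}H_S$ --- which enters inside $\prod_k(1-h_{kk}f)$ and the curvature corrections of $\lan\widetilde F,\mathbf N\ran$ --- contributes only the single clean term $-\f12\lan X,\nu_S\ran$ at leading order, all of its further products disappearing into $Q_S$. This hinges on the decay $|H_S|\le c|x|^{-1}$, itself inherited from the cone asymptotics of $S$, so that a typical stray product such as $p^{-1}H_S\,h_{kk}f$ is genuinely of size $|x|^{-1}\cdot|x|^{-1}\cdot|f|$ and hence absorbed by the error bound.
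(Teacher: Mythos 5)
Your proposal is correct and is essentially the paper's own argument: the paper proves this corollary only by the remark that it follows ``analog to the computation in the proof of'' Lemma~\ref{5.4Asp} using the decay estimates \eqref{YStf0}--\eqref{YStf2}, and your write-up supplies exactly those details, correctly identifying the one genuinely new ingredient --- that $\lan X,\nu_S\ran=p^{-1}H_S$ by \eqref{Sp} replaces the cone identity $\lan x_0,\nu_C\ran=0$ and produces the coefficient $\left(1-\f1{2p}\right)H_S$, with all stray products (e.g.\ $p^{-1}H_S\,h_{kk}f$, $H_S^2f$) absorbed into $Q_S$ via $|H_S|,|A_S|\le c|x|^{-1}$ and $|f|\le c|x|^{-1}$, $|\na_Sf|\le c|x|^{-2}$.
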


Now we consider an operator $L_0$ defined by
\begin{equation}\aligned\label{L0www}
L_0w=r^{1-n}e^{-\f{r^2}4}\f{\p}{\p r}\left(e^{\f{r^2}4}r^{n-1}\f{\p w}{\p r}\right)-\f w2.
\endaligned
\end{equation}
For any $\tau\in\R$ and $r>0$, we have
\begin{equation}\aligned\label{Lsnepr2}
L_0\left(r^{-n-1+\tau}e^{-\f{r^2}4}\right)=\left(-\f{\tau}2+\f{3(n+1)-(n+4)\tau+\tau^2}{r^2}\right)r^{-n-1+\tau}e^{-\f{r^2}4}.
\endaligned
\end{equation}
Then for any $s\in\R$
\begin{equation}\aligned\label{L0rnrn2}
L_0\left(t(r+s)r^{-n-2}e^{-\f{r^2}4}\right)=tr^{-n-2}e^{-\f{r^2}4}\left(\f s2+\f{3n+3}{r}+\f{4n+8}{r^2}s\right).
\endaligned
\end{equation}

Let $\Si$ be an $(n-1)$-dimensional embedded $C^{3,\a}$-regular hypersurface in $\S^n$ and $C=C\Si$ is a cone over $\Si$. Let $A_\Si$ denote the second fundamental form of $\Si$. Denote $x=r\xi\in C\Si\setminus\{0\}$ with $r>0$ and $\xi\in\Si$. For any $\psi\in C^2(\Si\times\R^+)=C^2(C\Si\setminus\{0\})$, by \eqref{LSFSE} we have
\begin{equation}\aligned\label{LCDeSiLsf}
L_C\psi=L_0\psi+\f1{r^2}\left(\De_\Si+|A_\Si|^2\right)\psi.
\endaligned
\end{equation}
Clearly, $\De_\Si+|A_\Si|^2$ is an elliptic operator with eigenfunctions $\{\varphi_k\}_{k=1}^\infty$ (with $\varphi_1>0$) and corresponding eigenvalues $\la_1<\la_2\le\la_3\le\cdots\le\la_k\cdots<\infty$. Namely,
\begin{equation}\aligned\label{DeSivaiphik}
\De_\Si\varphi_k+|A_\Si|^2\varphi_k+\la_k\varphi_k=0.
\endaligned
\end{equation}
We assume $\int_\Si|\varphi_k|^2=1$ for each $k$, then $\{\varphi_k\}_{k=1}^\infty$ forms an orthonormal basis of $L^2(\Si)$.

\subsection{Asymptotic decay estimates for self-expanders}
Now let us study the asymptotic decay estimates for self-expanders which converge to the mean convex cones by the Jacobi field operator on self-expanders at infinity.
Recall the definition for the mean convex cone in section 1 of this paper.
\begin{lemma}\label{lowgse}
Let $C$ be an $n$-dimensional $C^{3,\a}$-regular mean convex cone pointing into $\Om$ in $\R^{n+1}$ with $\p\Om=C$. If outside a compact set, $M$ is a smooth self-expander in $\Om$, which is a graph over $C$ with the positive graphic function $f$. Then there are constants $\ep_2>0$ and $R_5>0$ such that on $C\setminus B_{R_5}$ we have
\begin{equation}\aligned
f(\xi r)\ge\ep_2r^{-n-1}e^{-\f{r^2}4}\varphi_1(\xi).
\endaligned
\end{equation}
\end{lemma}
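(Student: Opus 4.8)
The plan is to prove the estimate by a maximum principle comparison of $f$ against an explicit barrier built from the first eigenfunction $\varphi_1$, exploiting mean convexity only through a sign. Since $M$ is, outside a compact set, a smooth self-expander written as a graph over $C$ with graphic function $f$, Lemma \ref{5.4Asp} applies and gives, on $C\setminus B_{R_4}$,
\[
L_Cf+H_C=Q_C(x,f,\na_Cf,\na^2_Cf),\qquad |Q_C|\le c|x|^{-3}\left(|x|\,|f|+|\na_Cf|\right),
\]
with $L_C$ as in \eqref{LSFSE}. Mean convexity of $C$ means $H_C\ge0$, hence $L_Cf\le Q_C$; this is the only place the hypothesis is used, and it is essential, since a term $-H_C$ of the opposite sign would be of order $r^{-1}$ and could not be absorbed below. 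The a priori decay $|f|\le c|x|^{-1}$ and $|\na_Cf|\le c|x|^{-2}$ from Lemma \ref{morepresice01order} shows in particular that $f\to0$ at infinity.

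Next I would construct the barrier. For a separated function $\phi(r)\varphi_1(\xi)$, combining \eqref{LCDeSiLsf} with $(\De_\Si+|A_\Si|^2)\varphi_1=-\la_1\varphi_1$ from \eqref{DeSivaiphik} gives $L_C(\phi\varphi_1)=\varphi_1\big(L_0\phi-\f{\la_1}{r^2}\phi\big)$. I take the shifted barrier $w=\ep_2(r+s)r^{-n-2}e^{-\f{r^2}4}\varphi_1$ with $s=1$ and compute, using \eqref{L0rnrn2},
\[
L_Cw=\ep_2 r^{-n-2}e^{-\f{r^2}4}\varphi_1\left(\f s2+\f{3n+3-\la_1}r+\f{(4n+8-\la_1)s}{r^2}\right).
\]
The key point is that the principal term is the positive constant $\f s2$, so that $L_Cw\ge c_0\,r^{-1}w$ once $r$ is large. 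This is exactly what the flat barrier $r^{-n-1}e^{-\f{r^2}4}\varphi_1$ (the $\tau=0$ case of \eqref{Lsnepr2}, for which $L_0$ produces only a multiple of order $r^{-2}$ of the barrier) fails to provide; the $+s$ shift upgrades the principal part from order $r^{-2}$ to order $r^{-1}$. Since moreover $w\ge\ep_2 r^{-n-1}e^{-\f{r^2}4}\varphi_1$, it suffices to prove $f\ge w$.

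Then I would run the comparison. Set $g=f-w$ on $C\setminus B_{R_5}$. On the inner boundary $\p B_{R_5}\cap C$ the positive function $f$ has a positive minimum $\de_0$, and choosing $\ep_2$ small makes $w\le\de_0$ there, so $g\ge0$; since $g\to0$ at infinity, if $g$ were negative somewhere it would attain a negative minimum at an interior point $p_0$ with $r(p_0)>R_5$. At $p_0$ one has $\na_Cg=0$ and $\De_Cg\ge0$, and because $|A_C|^2-\f12<0$ for $R_5$ large while $g(p_0)<0$, it follows that $L_Cg(p_0)>0$, i.e. $L_Cf(p_0)>L_Cw(p_0)$. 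On the other hand $\na_Cf(p_0)=\na_Cw(p_0)$ and $0<f(p_0)<w(p_0)$, together with $|\na_Cw|\le c\,r\,w$, force $|Q_C(p_0)|\le c'r^{-2}w(p_0)$, whence $L_Cf(p_0)\le Q_C(p_0)\le c'r^{-2}w(p_0)$. Combined with $L_Cw(p_0)\ge c_0 r^{-1}w(p_0)$ this gives $c'r^{-2}>c_0r^{-1}$, impossible once $R_5>c'/c_0$. Hence $g\ge0$, that is $f\ge w\ge\ep_2 r^{-n-1}e^{-\f{r^2}4}\varphi_1$ on $C\setminus B_{R_5}$, which is the assertion.

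The main obstacle I anticipate is precisely the matching of orders: the only a priori control on the nonlinear remainder is $|Q_C|\le c|x|^{-3}(|x||f|+|\na_Cf|)$, which at the barrier scale is of the same order $r^{-2}w$ that the naive barrier $r^{-n-1}e^{-\f{r^2}4}\varphi_1$ produces under $L_C$, so a direct comparison does not close. The whole argument hinges on beating this by the shifted barrier of \eqref{L0rnrn2}, whose image under $L_C$ has strictly larger principal order $r^{-1}w$; verifying $|\na_Cw|\le c\,r\,w$ and the resulting bound $|Q_C(p_0)|\le c'r^{-2}w(p_0)$ at the putative interior minimum is the delicate step, after which the interior-minimum inequality fails for all sufficiently large $R_5$.
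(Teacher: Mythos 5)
Your proposal is correct and takes essentially the same route as the paper's own proof: the same shifted barrier $\ep(r+1)r^{-n-2}e^{-\f{r^2}4}\varphi_1$ built from \eqref{L0rnrn2}, \eqref{LCDeSiLsf} and \eqref{DeSivaiphik}, the same use of Lemma \ref{5.4Asp} with mean convexity entering only through $H_C\ge0$, and the same interior-minimum contradiction in which $L_C$ of the barrier is of order $r^{-1}$ times the barrier while $Q_C$ (using $0<f<w$ and $\na_Cf=\na_Cw$ at the minimum point) contributes only order $r^{-2}$. You have also correctly identified why the unshifted barrier $r^{-n-1}e^{-\f{r^2}4}\varphi_1$ fails, which is precisely the reason for the $(r+1)$ factor in the paper.
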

\begin{proof}
Combining \eqref{L0rnrn2} \eqref{LCDeSiLsf} and \eqref{DeSivaiphik}, we obtain
\begin{equation}\aligned
L_C\left(t(r+s)r^{-n-2}e^{-\f{r^2}4}\varphi_1\right)=\left(\f s2+\f{3n+3}{r}+\f{4n+8}{r^2}s-\f{r+s}{r^2}\la_1\right)tr^{-n-2}e^{-\f{r^2}4}\varphi_1.
\endaligned
\end{equation}
There is a constant $R_5>R_4$ depending only on the cone $C$, such that on $C\setminus B_{R_5}$, we have
\begin{equation}\aligned\label{LCf135.57}
L_C\left(t(r+1)r^{-n-2}e^{-\f{r^2}4}\varphi_1\right)&>\f13tr^{-n-2}e^{-\f{r^2}4}\varphi_1
\endaligned
\end{equation}
for any $t>0$. Since $f$ is positive, then there is a sufficiently small constant $\ep_{R_5}>0$ such that
$$\inf_{\p B_{R_5}}\left(f-\ep_{R_5}(R_5+1)R_5^{-n-2}e^{-\f{R_5^2}4}\varphi_1\right)\ge0.$$
Denote $\phi=\ep_{R_5}(r+1)r^{-n-2}e^{-\f{r^2}4}\varphi_1$ for convenience. Suppose that $R_5$ is sufficiently large such that $|A_C|^2<\f1{100}$ on $C\setminus B_{R_5}$.
We assume that there is a point $z=(p,r)\in\Si\times(R_5,\infty)$ such that
\begin{equation}\aligned\label{fz5.58CR4}
f(z)-\phi(z)=\inf_{C\setminus B_{R_4}}\left(f-\phi\right)<0.
\endaligned
\end{equation}
Then $\na_Cf=\na_C\phi$, and $\De_C(f-\phi)\ge0$ at $z$. However, combining $H_C\ge0$, \eqref{LCDeCACQf} and \eqref{LCf135.57} at $z$ we have
\begin{equation}\aligned
0<\left(\f1{100}-\f12\right)(f-\phi)\le&\De_C(f-\phi)+\f12\lan z,\na_C(f-\phi)\ran+\left(|A_C|^2-\f12\right)(f-\phi)\\
=&L_Cf-L_C\phi<Q_C(z,f,\na_Cf,\na^2_Cf)-H_C-\f{\phi}{3(|z|+1)}\\
\le&c\left(\f{f}{|z|^2}+\f{|\na_Cf|}{|z|^3}\right)-\f{\phi}{3(|z|+1)}\\
\le&c\left(\f{\phi}{|z|^2}+\f{|\na_C\phi|}{|z|^3}\right)-\f{\phi}{3(|z|+1)},
\endaligned
\end{equation}
which is impossible for the sufficiently large $R_5$ since $\phi$ is a smooth positive function. Hence \eqref{fz5.58CR4} does not hold, i.e.,
\begin{equation}\aligned
\inf_{C\setminus B_{R_5}}\left(f-\phi\right)\ge0,
\endaligned
\end{equation}
and then we complete the proof.
\end{proof}

\begin{lemma}\label{upgse}
Let $S$ be a scaling smooth self-expander satisfying \eqref{Sp} with a constant $p\in(0,1]$ outside a compact set. If a self-expander $M$ can be represented as a smooth graph over $S$ with the graphic function $\widetilde{f}$, and $M$ has the same $C^{3,\a}$-regular tangent cone $C$ at infinity as $S$. Denote $\pi_S$ be the projection from $C$ to $S$. Then there are constants $R_{6}>0$ independent of $p$, and $\ep_{3}>0$, such that if $\widetilde{f}\circ\pi_S$ is positive on $C\cap B_{R_6}$, then for any $\xi r\in C\setminus B_{R_{6}}$ we have
\begin{equation}\aligned
\widetilde{f}\circ\pi_S(\xi r)\le\ep_{3}^{-1}\ r^{-n-1}e^{-\f{r^2}4}\varphi_1(\xi)\qquad if\ \ \left(\f1{2p}-1\right)H_S\ge0,
\endaligned
\end{equation}
and
\begin{equation}\aligned
\widetilde{f}\circ\pi_S(\xi r)\ge\ep_{3}\ r^{-n-1}e^{-\f{r^2}4}\varphi_1(\xi)\qquad if\ \ \left(\f1{2p}-1\right)H_S\le0.
\endaligned
\end{equation}
\end{lemma}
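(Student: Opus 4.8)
The plan is to imitate the barrier argument of Lemma \ref{lowgse}, using the equation for $\widetilde{f}$ on $S$ furnished by Corollary \ref{sosse} in place of Lemma \ref{5.4Asp}. Writing the equation as
\[
L_S\widetilde{f}=\left(\f1{2p}-1\right)H_S+Q_S,\qquad |Q_S|\le c|x|^{-3}\left(|x|\,|\widetilde{f}|+|\na_S\widetilde{f}|\right),
\]
I would work with the comparison function $\phi=t(r+1)r^{-n-2}e^{-\f{r^2}4}\varphi_1$ for a constant $t>0$ to be chosen. Since $S$ is asymptotically conical in the sense of \eqref{YStf0}--\eqref{YStf2}, the operator $L_S$ agrees with $L_C$ up to terms of lower order at infinity, so the computation leading to \eqref{LCf135.57} (via \eqref{L0rnrn2}, \eqref{LCDeSiLsf} and \eqref{DeSivaiphik}) transfers to $S$ and yields, on $S\setminus B_{R_6}$ with $R_6$ large, $L_S\phi>\f13 t r^{-n-2}e^{-\f{r^2}4}\varphi_1=\phi/(3(r+1))>0$. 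The proof then splits on the sign of $\left(\f1{2p}-1\right)H_S$.

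When $\left(\f1{2p}-1\right)H_S\le0$ the lower bound follows essentially verbatim from the scheme of Lemma \ref{lowgse}: here $L_S\widetilde{f}\le Q_S$, and taking $t=\ep_3$ small enough that $\widetilde{f}\ge\phi$ on $\p B_{R_6}$ I would suppose $\inf_{S\setminus B_{R_6}}(\widetilde{f}-\phi)<0$ is attained at an interior point $z$. At $z$ one has $\na_S(\widetilde{f}-\phi)=0$, $\De_S(\widetilde{f}-\phi)\ge0$ and, crucially, $\widetilde{f}(z)<\phi(z)$; the latter lets me bound $|Q_S(z)|\le c|z|^{-3}\big(|z|\phi+|\na_S\phi|\big)\le c\,\phi/|z|^2$. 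Combined with $L_S\phi>\phi/(3(|z|+1))$ this gives $L_S(\widetilde{f}-\phi)(z)<0$, contradicting $L_S(\widetilde{f}-\phi)(z)=\De_S(\widetilde{f}-\phi)+\big(|A_S|^2-\f12\big)(\widetilde{f}-\phi)>0$ at a negative interior minimum. Hence $\widetilde{f}\ge\ep_3\,r^{-n-1}e^{-\f{r^2}4}\varphi_1$, the asserted lower bound.

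For $\left(\f1{2p}-1\right)H_S\ge0$ I would run the mirror image: take $t=\ep_3^{-1}$ large enough that $\widetilde{f}\le\phi$ on $\p B_{R_6}$, assume $\sup_{S\setminus B_{R_6}}(\widetilde{f}-\phi)>0$ is attained at an interior $z$, and exploit that now the forcing $\left(\f1{2p}-1\right)H_S$ carries the favorable sign. I would absorb $Q_S$ into the operator rather than treat it as a forcing term: its bound is linear in $(\widetilde{f},\na_S\widetilde{f})$ with coefficients $O(|x|^{-2})$, $O(|x|^{-3})$, so it only perturbs the drift and the still strongly negative zeroth-order coefficient $|A_S|^2-\f12$ of $L_S$, giving a modified operator $\widetilde{L}_S$ with $\widetilde{L}_S\widetilde{f}=\left(\f1{2p}-1\right)H_S\ge0$. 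At the interior maximum $z$ one then gets $\widetilde{L}_S(\widetilde{f}-\phi)(z)<0$, i.e. $\left(\f1{2p}-1\right)H_S(z)<\widetilde{L}_S\phi(z)$; since $\widetilde{L}_S\phi$ is comparable to $L_S\phi\sim\phi/(3r)$, which the forcing from $H_S$ is expected to dominate for $|z|$ large, this should be contradictory, yielding $\widetilde{f}\le\ep_3^{-1}r^{-n-1}e^{-\f{r^2}4}\varphi_1$.

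The hard part is precisely this last step. Unlike the lower-bound case, at the maximum $z$ one has $\widetilde{f}(z)>\phi(z)$, so the Gaussian barrier does not control $\widetilde{f}(z)$, and the merely polynomial a priori decay \eqref{YStf1} is too weak: with it, the boundary normalization on $\p B_{R_6}$ forces $\ep_3^{-1}$ to grow like $R_6^{n}e^{R_6^2/4}$, which is incompatible with taking $R_6$ large enough for the forcing to dominate $\widetilde{L}_S\phi$. I therefore expect to have to establish first an a priori Gaussian upper bound $\widetilde{f}\le c\,r^{-n-1}e^{-\f{r^2}4}$ of the correct rate—so that $Q_S$ is genuinely of lower order and $\ep_3^{-1}$ can be fixed independently of $R_6$—by reading $\widetilde{L}_S\widetilde{f}=\left(\f1{2p}-1\right)H_S$ as an equation with Gaussian forcing and using the expander drift $\f12\lan x,\na_S\cdot\ran$ together with the decay of $H_S$ coming from \eqref{YStf2}. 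Pinning down this a priori decay, and the attendant sharp rate of $H_S$, is where the real work lies.
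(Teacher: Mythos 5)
Your lower-bound case is sound and matches the paper's scheme (the paper says "similarly" for that half, running the argument of Lemma \ref{lowgse} through the transfer formula \eqref{LsftildePtilde}). The genuine gap is in the upper-bound case, and it is not where you located it. Your barrier $\phi=t(r+1)r^{-n-2}e^{-\f{r^2}4}\varphi_1$ is a \emph{subsolution}, $L\phi>\phi/(3(r+1))>0$, which is the wrong sign for an upper barrier. At an interior positive maximum $z$ of $\widetilde{f}-\phi$ the maximum principle gives $\bigl(|A_C|^2-\f12\bigr)(\widetilde{f}-\phi)(z)\ge L(\widetilde{f}-\phi)(z)$ with negative left-hand side, while your right-hand side $\bigl(\f1{2p}-1\bigr)H_S+Q_S-L\phi$ contains the term $-L\phi<0$; after absorbing $Q_S$ both sides are negative and no contradiction arises. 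Indeed, when $H_S\equiv0$ (which the hypothesis $\bigl(\f1{2p}-1\bigr)H_S\ge0$ permits, e.g.\ $p=\f12$) your final inequality $\bigl(\f1{2p}-1\bigr)H_S(z)<\widetilde{L}_S\phi(z)$ reads $0<\widetilde{L}_S\phi(z)$, which is simply true, so "the forcing dominates" cannot be salvaged. The paper's fix is to flip the barrier, not to improve the a priori decay of $\widetilde f$: it takes $\psi=\be_{R_6}(r-1)r^{-n-2}e^{-\f{r^2}4}\varphi_1$, for which \eqref{L0rnrn2} with $s$ replaced by $-s$ yields the \emph{supersolution} inequality \eqref{LCt-sn+2}, $L_C\psi<-\f13\be_{R_6}\,r^{-n-2}e^{-\f{r^2}4}\varphi_1$; then $-L_C\psi$ contributes the positive term $\f{\psi}{3(|z|-1)}$ on the right of \eqref{tfpsi568}, contradicting the negative left side.

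Consequently the a priori Gaussian bound that you defer as "where the real work lies" is never needed, and your diagnosis of the obstruction is mistaken. At the interior maximum one has $(\widetilde{f}-\psi)(z)>0$, and the paper splits $|\widetilde{f}|\le(\widetilde{f}-\psi)+\psi$ inside the bounds for $Q_S$ and $\widetilde{P}$ from \eqref{LsftildePtilde}: the $(\widetilde{f}-\psi)$ part is absorbed into the zeroth-order coefficient, which stays strongly negative, $|A_C|^2-\f12+c|z|^{-2}<0$, and its product with the \emph{positive} difference only reinforces the negativity of the left side; everything that survives on the right is controlled by the barrier alone. The merely polynomial decay \eqref{YStf1} enters only to guarantee that the supremum of $\widetilde{f}-\psi$ is attained (both functions vanish at infinity), not quantitatively. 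Nor is there the circularity you fear between $R_6$ and $\ep_3$: the supersolution inequality is homogeneous in the multiplicative constant $\be_{R_6}$, so the required largeness of $R_6$ depends only on $n$, $C$ and the constants in $Q_S$, $\widetilde{P}$, and $\ep_3^{-1}\sim\be_{R_6}$ is fixed afterwards by matching $\widetilde{f}$ on the compact set $\p B_{R_6}$ --- that it may be large is harmless, since the lemma asserts only the existence of constants. (Your plan to transport the barrier computation to $S$, rather than transporting the equation to $C$ via \eqref{LsftildePtilde} as the paper does, is a harmless variant; the sign of the barrier is the substantive issue.)
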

\begin{proof}
Let $C=C\Si$ be the regular tangent cone of $S$ at infinity, with the metric $g_C$ and the Levi-Civita connection $\na_C$ outside the vertex, respectively. Up to a compact set, $S$ can be seen as a graph over $C$ with a graphic function $f$. Let $\{\xi_i\}$ be a curve coordinate of $C$ and $g_C=\si_{ij}d\xi_id\xi_j$, then the metric of $S$ can be denoted by
$$\tilde{g}=\tilde{g}_{ij}d\xi_id\xi_j=\left(\si_{ij}+\widetilde{D}_if\widetilde{D}_jf\right)d\xi_id\xi_j,$$
where $\widetilde{D}=\na_C$, and $\widetilde{D}_if=\lan \widetilde{D}f,\f{\p}{\p\xi_i}\ran$ for convenience.
Moreover,
$$\tilde{g}^{ij}=\si^{ij}-\f{\widetilde{D}^if\widetilde{D}^jf}{1+|\widetilde{D}f|^2},\quad \det\tilde{g}_{ij}=(1+|\widetilde{D}f|^2)\det\si_{ij},$$
where $\widetilde{D}^if=\si^{ij}\widetilde{D}_jf$ and $|\widetilde{D}u|^2=\si^{ij}\widetilde{D}_if\widetilde{D}_jf$. Let $\nu_C$ be the unit normal vector of $C$. There is a sufficiently large constant $R_{6}>0$ such that for each $X\in S\setminus B_{R_{6}}$, there is a unique $x\in C$ with $X=x+f(x)\nu_C$. Hence for any $\mathcal{F}\in C^2(S)$ and $X=x+f(x)\nu_C\in S$, we identify $\mathcal{F}(X)$ with $\mathcal{F}(x)$. Then it follows that (see formula (2.2) in \cite{DJX} for instance)
\begin{equation}\aligned
\De_S \mathcal{F}=\f1{\sqrt{\tilde{g}_{kl}}}\p_{\xi_j}\left(\sqrt{\tilde{g}_{kl}}\tilde{g}^{ij}\p_{\xi_j}\mathcal{F}\right)=\De_C \mathcal{F}+P_1(X,\na_C\mathcal{F}),
\endaligned
\end{equation}
where $|P_1(X,\na_C\mathcal{F})|\le c|X|^{-5}|\na_C\mathcal{F}|$. Since
$$\nu\triangleq\f1{\sqrt{1+|\na_Cf|^2}}\left(-\sum_i\widetilde{D}^if\f{\p}{\p\xi_i}+\nu_C\right)$$
is the unit normal vector of $S$, one gets
\begin{equation}\aligned
\na_S\mathcal{F}=&\sum_i\widetilde{D}^i\mathcal{F}\f{\p}{\p\xi_i}-\left\lan\sum_i\widetilde{D}^i\mathcal{F}\f{\p}{\p\xi_i},\nu\right\ran\nu
=\sum_i\widetilde{D}^i\mathcal{F}\f{\p}{\p\xi_i}+\f{\lan\na_Cf,\na_C\mathcal{F}\ran}{\sqrt{1+|\na_Cf|^2}}\nu\\
=&\sum_i\left(\widetilde{D}^i\mathcal{F}-\f{\lan\na_Cf,\na_C\mathcal{F}\ran}{1+|\na_Cf|^2}\widetilde{D}^if\right)\f{\p}{\p\xi_i}
+\f{\lan\na_Cf,\na_C\mathcal{F}\ran}{1+|\na_Cf|^2}\nu_C\\
=&\na_C\mathcal{F}-\f{\lan\na_Cf,\na_C\mathcal{F}\ran}{1+|\na_Cf|^2}\na_Cf+\f{\lan\na_Cf,\na_C\mathcal{F}\ran}{1+|\na_Cf|^2}\nu_C,
\endaligned
\end{equation}
and
\begin{equation}\aligned
\lan X,\na_S\mathcal{F}\ran=\lan x,\na_C\mathcal{F}\ran+\left(f-\lan x,\na_Cf\ran\right)\f{\lan\na_Cf,\na_C\mathcal{F}\ran}{1+|\na_Cf|^2}=\lan x,\na_C\mathcal{F}\ran+P_2(x,\na_C\mathcal{F}),
\endaligned
\end{equation}
with $|P_2(x,\na_C\mathcal{F})|\le c|x|^{-3}|\na_C\mathcal{F}|$.

Now we identify $\widetilde{f}(X)$ with $\widetilde{f}(x)$ for $X=x+f(x)\nu_C\in S$. From \eqref{nakA}, we have $|A_S|^2\le c|x|^{-2}$ at the point $X$. For any $x\in S\setminus B_{R_{6}}$, one has
\begin{equation}\aligned\label{LsftildePtilde}
L_S \widetilde{f}=\De_C \widetilde{f}+\f12\lan x,\na_C \widetilde{f}\ran+\left(|A_C|^2-\f12\right)\widetilde{f}+\widetilde{P}(x,\widetilde{f},\na_C \widetilde{f},\na^2_C \widetilde{f}),
\endaligned
\end{equation}
where the function $\widetilde{P}$ satisfies
\begin{equation}\aligned
\left|\widetilde{P}(x,\widetilde{f},\na_C \widetilde{f},\na^2_C \widetilde{f})\right|\le c|x|^{-2}\left(|\widetilde{f}|+|x|^{-1}|\na_C \widetilde{f}|\right).
\endaligned
\end{equation}
For the sufficiently large $R_{6}>0$, from \eqref{L0rnrn2} one has
\begin{equation}\aligned\label{LCt-sn+2}
L_C\left(t(r-s)r^{-n-2}e^{-\f{r^2}4 }\varphi_1\right)<-\f{t}3sr^{-n-2}e^{-\f{r^2}4}\varphi_1
\endaligned
\end{equation}
on $C\setminus B_{R_{6}}$ for any constants $t>0$ and $s\ge1$.
There is a constant $\be_{R_6}>0$ such that
$$\sup_{\p B_{R_6}}\left(\widetilde{f}-\be_{R_6}(r-1)r^{-n-2}e^{-\f{r^2}4}\varphi_1\right)=0.$$
Denote $\psi=\be_{R_{6}}(r-1)r^{-n-2}e^{-\f{r^2}4}\varphi_1$ for convenience. Suppose $|A_C|^2<\f1{100}$ on $S\setminus B_{R_{6}}$ for the sufficiently large constant $R_{6}$.

When $\left(\f1{2p}-1\right)H_S\ge0$, we claim
\begin{equation}\aligned\label{5.69tfpsi}
\sup_{z\in C\setminus B_{R_{6}}}\left(\widetilde{f}(z)-\psi(z)\right)\le0.
\endaligned
\end{equation}
If not, there is a point $z\in C\setminus \overline{B_{R_{6}}}$ such that $\widetilde{f}(z)-\psi(z)=\sup_{z\in C\setminus B_{R_{6}}}\left(\widetilde{f}-\psi\right)>0,$
then $\na_C\widetilde{f}=\na_C\psi$, and $\De_C(\widetilde{f}-\psi)\le0$ at $z$. However, from \eqref{LsftildePtilde}\eqref{LCt-sn+2} and Corollary \ref{sosse}, at $z$ we have
\begin{equation}\aligned\label{tfpsi568}
&\left(\f1{100}-\f12\right)(\widetilde{f}-\psi)\ge\De_C(\widetilde{f}-\psi)+\f12\lan z,\na_C(\widetilde{f}-\psi)\ran+\left(|A_C|^2-\f12\right)(\widetilde{f}-\psi)\\
=&L_C\widetilde{f}-L_C\psi
\ge L_S\widetilde{f}-\widetilde{P}(z,\widetilde{f},\na_C\widetilde{f},\na^2_C\widetilde{f})+\f{1}{3(|z|-1)}\psi\\
\ge&\left(\f1{2p}-1\right)H_S+Q_S(z,\widetilde{f},\na_S\widetilde{f},\na_S^2\widetilde{f})-
\widetilde{P}(z,\widetilde{f},\na_C\widetilde{f},\na^2_C\widetilde{f})+\f{1}{3(|z|-1)}\psi\\
\ge& -c|z|^{-2}\left(|\widetilde{f}|+|z|^{-1}|\na_C \widetilde{f}|\right)+\f{1}{3(|z|-1)}\psi\\
=& -c|z|^{-2}(\widetilde{f}-\psi)-c|z|^{-2}\left(\psi+|z|^{-1}|\na_C\psi|\right)+\f{1}{3(|z|-1)}\psi,
\endaligned
\end{equation}
which is in contradiction to the sufficiently large constant $R_{6}$. In particular, $R_6$ is independent of $p\in(0,1]$. Hence we obtain the claim \eqref{5.69tfpsi}.

Similarly, for $\left(\f1{2p}-1\right)H_S\le0$ we can show $\widetilde{f}\ge\ep_{3}\ r^{-n-1}e^{-\f{r^2}4}\varphi_1$ if $\ep_3$ is sufficiently small and $R_6$ is sufficiently large.
\end{proof}

Combining Lemma \ref{lowgse} and Lemma \ref{upgse}, we get the following corollary.
\begin{corollary}\label{updownC}
Let $C$ be an $n$-dimensional regular, minimal but not minimizing cone in $\R^{n+1}$. If outside a compact set, $M$ is a self-expander in one of components of $\R^{n+1}\setminus C$, which is a graph over $C$ with the positive graphic function $f$. Then there are constants $\ep_4>0$ and $R_7>0$ such that for each $\xi r\in C\setminus B_{R_7}$ we have
\begin{equation}\aligned\label{asymse}
\ep_4r^{-n-1}e^{-\f{r^2}4}\varphi_1(\xi)\le f(\xi r)\le\ep_4^{-1}r^{-n-1}e^{-\f{r^2}4}\varphi_1(\xi).
\endaligned
\end{equation}
\end{corollary}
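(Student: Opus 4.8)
The plan is to read off the two-sided estimate \eqref{asymse} directly from the two preceding lemmas, exploiting the key observation that a minimal cone is \emph{simultaneously} a minimal hypersurface and a self-expander. Indeed, since $C$ is minimal we have $H_C\equiv0$ on $C\setminus\{0\}$, and since $C$ is a cone the position vector $X$ is everywhere tangent to $C$, so $\langle X,\nu_C\rangle\equiv0$. Hence $C\setminus\{0\}$ satisfies \eqref{SE}: it is a smooth self-expander away from the origin, scale-invariant, and it satisfies \eqref{Sp} with $H_S=\frac12\langle X,\nu_C\rangle$, i.e. with $p=\frac12$, its tangent cone at infinity being $C$ itself. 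This single remark is what lets both lemmas be applied with $S=C$.

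For the lower bound I would apply Lemma \ref{lowgse} verbatim. A minimal cone is in particular mean convex ($H_C\equiv0\ge0$), and by hypothesis $M$ is a self-expander lying in one component of $\R^{n+1}\setminus C$ which is a graph over $C$ with positive graphic function $f$; these are precisely the hypotheses of Lemma \ref{lowgse}. It produces constants $\ep_2>0$ and $R_5>0$ with $f(\xi r)\ge\ep_2 r^{-n-1}e^{-r^2/4}\varphi_1(\xi)$ on $C\setminus B_{R_5}$.

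For the upper bound I would invoke Lemma \ref{upgse} with $S=C$ and $p=\frac12$ (any $p\in(0,1]$ would do, since $H_C\equiv0$). Because $H_C\equiv0$ we have $\left(\frac1{2p}-1\right)H_S=0\ge0$, so the first alternative of Lemma \ref{upgse} applies; since $M$ is a graph over $C=S$ with the same positive graphic function $f=\widetilde f$ and shares the tangent cone $C$ at infinity, this yields $\ep_3>0$ and $R_6>0$ with $f(\xi r)\le\ep_3^{-1}r^{-n-1}e^{-r^2/4}\varphi_1(\xi)$ on $C\setminus B_{R_6}$. Putting $\ep_4=\min\{\ep_2,\ep_3\}$ and $R_7=\max\{R_5,R_6\}$ glues the two one-sided bounds into \eqref{asymse}. (One could alternatively get the lower bound from the second alternative of Lemma \ref{upgse}, as the borderline $\left(\frac1{2p}-1\right)H_S=0$ satisfies both $\ge0$ and $\le0$, but routing it through Lemma \ref{lowgse} is cleaner.)

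The only place that genuinely needs checking — and the step I expect to be the main (minor) obstacle — is verifying that $C$ legitimately plays the role of $S$ in Lemma \ref{upgse}: that $C\setminus\{0\}$ really is a scaling smooth self-expander satisfying \eqref{Sp}, and that $M$ and $C$ have the same embedded tangent cone at infinity. The first is the minimal-cone-is-a-self-expander observation above; the second holds because $M$ is a graph over $C$ with $f$ decaying (by the asymptotic analysis culminating in Lemma \ref{morepresice01order}), so $\frac1r M\to C$ at infinity. Note that only the minimality of $C$ enters the estimate itself; the ``not minimizing'' hypothesis serves to guarantee that a nontrivial such $M$ with strictly positive $f$ exists (for minimizing cones $M$ must coincide with $C$). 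Beyond bookkeeping the constants I anticipate no essential difficulty, since all the analytic work — the barrier constructions against the operator $L_0$ and the maximum-principle arguments — is already carried out inside Lemmas \ref{lowgse} and \ref{upgse}.
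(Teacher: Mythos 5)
Your proof is correct and follows exactly the paper's route: the paper derives Corollary \ref{updownC} precisely by combining Lemma \ref{lowgse} (minimality gives mean convexity, hence the lower bound) with Lemma \ref{upgse} applied to $S=C$, which, as you observe, trivially satisfies \eqref{Sp} because $H_C\equiv0$ and $\langle X,\nu_C\rangle\equiv0$ on a cone, so the alternative $\left(\frac1{2p}-1\right)H_S\ge0$ yields the upper bound. Your handling of the constants and your remark that the ``not minimizing'' hypothesis only ensures a nontrivial $M$ exists are consistent with the paper's one-line proof.
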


Recall that $\Si$ is an $(n-1)$-dimensional embedded $C^{3,\a}$-regular manifold in $\S^n$ with positive mean curvature $H_\Si$. For all $x\in C\setminus \{0\}$, by the definition of $L_C$ in \eqref{LCDeSiLsf} we have
\begin{equation}\aligned
L_Cf=&\f{\p^2f}{\p r^2}+\f{n-1}r\f{\p f}{\p r}+\f r2\f{\p f}{\p r}-\f f2+\f1{r^2}\left(\De_\Si f+|A_\Si|^2f\right).
\endaligned
\end{equation}
Set $$\z_\a=\f{H_\Si}r+\f{\a}{r^3}$$
for each constant $\a\in\R$. Note that the mean curvature of $C$: $H_C=\f1{|x|}H_\Si$ at $x\in C\setminus\{0\}$, then
\begin{equation}\aligned
L_C\z_\a+H_C=&\f{1}{r^3}\left((|A_\Si|^2+3-n)H_\Si+\De H_\Si-2\a\right)+\f{|A_\Si|^2+3(5-n)}{r^5}\a.
\endaligned
\end{equation}
So there exists a constant $\a_0>0$ depending only on $n$ and the cone $C$, such that for any $|\a|\ge\a_0$ we have
\begin{equation}\label{LCphiaHCr-3}
L_C\z_\a+H_C\left\{\begin{split}
>&-\f{\a}{r^3},\qquad \a<0\\
<&-\f{\a}{r^3},\qquad \a>0.\\
\end{split}\right.
\end{equation}

\begin{lemma}\label{AsmPMC}
Let $C$ be an $n$-dimensional $C^{3,\a}$-regular cone in $\R^{n+1}$ with positive mean curvature pointing into $\Om$, which is one of components of $\R^{n+1}\setminus C$. If outside a compact set, $M$ is a smooth self-expander in $\Om$, which is a graph over $C$ with the graphic function $f$. Then there are constants $K_C>0$ and $R_8>0$ such that we have
\begin{equation}\aligned
\f{H_\Si(\xi)}r-\f{K_C}{r^3}\le f(\xi r)\le \f{H_\Si(\xi)}r+\f{K_C}{r^3}\qquad \mathrm{for\ any\ } \xi r\in C\setminus B_{R_8},
\endaligned
\end{equation}
where $H_\Si(\xi)$ is the mean curvature of $\Si$ at $\xi$.
\end{lemma}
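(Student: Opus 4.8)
The plan is to pinch $f$ between the two barriers $\z_{-K_C}$ and $\z_{K_C}$ built in \eqref{LCphiaHCr-3}, combining the scalar equation \eqref{LCDeCACQf} for $f$ with the maximum principle for the operator $L_C$ of \eqref{LSFSE}. The governing point is that, for a cone $C$ with \emph{positive} mean curvature pointing into $\Om$, the function $H_\Si$ is bounded away from zero on the compact $\Si$, so the leading term $H_\Si/r$ of $\z_\a$ genuinely dominates and produces the polynomial decay $f\sim H_\Si/r$ asserted here, in contrast to the exponential rate appearing for minimal cones in Corollary \ref{updownC}.

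First I would collect the a priori decay already in hand. By Lemma \ref{morepresice01order}, specifically \eqref{naCf518}, one has $|f(y)|\le c|y|^{-1}$ and $|\na_Cf(y)|\le c|y|^{-2}$ on $C\setminus B_{R_3}$. Inserting these into the error estimate of Lemma \ref{5.4Asp} gives
\[
|Q_C(x,f,\na_Cf,\na_C^2f)|\le c|x|^{-3}\big(|x|\cdot|f|+|\na_Cf|\big)\le c|x|^{-3},
\]
so the inhomogeneity in \eqref{LCDeCACQf} is of order $|x|^{-3}$. Since $H_\Si$ is bounded, $\z_\a=H_\Si/r+\a/r^3$ also obeys $|\z_\a|\le c/r$, whence $f-\z_\a\to0$ as $r\to\infty$. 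I then enlarge $R_8$ so that $|A_C|^2<\tfrac12$ on $C\setminus B_{R_8}$, which renders the zeroth order coefficient $|A_C|^2-\tfrac12$ of $L_C$ strictly negative there, and I require $K_C\ge\a_0$ so that \eqref{LCphiaHCr-3} applies.

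For the upper bound take $\a=K_C>0$ and set $g=f-\z_{K_C}$. Choosing $K_C$ large enough that $K_C$ exceeds the bounded quantity $R_8^3\big(f-H_\Si/R_8\big)$ on $\p B_{R_8}$ forces $g\le0$ on $\p B_{R_8}$. Suppose $\sup_{C\setminus B_{R_8}}g>0$; since $g\le0$ on the inner boundary and $g\to0$ at infinity, the supremum is attained at an interior point $z$ with $\na_Cg(z)=0$ and $\De_Cg(z)\le0$, so from \eqref{LSFSE},
\[
L_Cg(z)\le\Big(|A_C|^2-\tfrac12\Big)g(z)<0.
\]
On the other hand, subtracting the first case of \eqref{LCphiaHCr-3} (valid for $\a>0$) from \eqref{LCDeCACQf} gives $L_Cg>Q_C+K_C/r^3\ge(K_C-c)/r^3>0$ once $K_C>c$, a contradiction. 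Hence $f\le\z_{K_C}$ on $C\setminus B_{R_8}$.

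The lower bound is symmetric: with $\a=-K_C<0$ and $h=\z_{-K_C}-f$, the second case of \eqref{LCphiaHCr-3} yields $L_Ch>K_C/r^3-Q_C\ge(K_C-c)/r^3>0$, while a positive interior maximum of $h$ would force $L_Ch\le(|A_C|^2-\tfrac12)h<0$; this contradiction gives $f\ge\z_{-K_C}$. I do not expect any single estimate to be the real obstacle; rather, the delicate part is the simultaneous calibration of the one constant $K_C$, which must be large enough both to beat the $O(r^{-3})$ term $Q_C$ in the interior and to absorb the bounded but unknown boundary values of $f$ on $\p B_{R_8}$, all while $R_8$ is fixed large enough that $|A_C|^2<\tfrac12$ secures the correct sign of the zeroth order term in the maximum principle.
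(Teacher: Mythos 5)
Your proposal is correct and follows essentially the same route as the paper's proof: the same barriers $\z_{\pm K_C}=H_\Si/r\pm K_C/r^3$ from \eqref{LCphiaHCr-3}, the linearized equation \eqref{LCDeCACQf}, and an interior-extremum maximum principle for $L_C$ with $|A_C|^2<\f12$ on $C\setminus B_{R_8}$. The only (harmless) deviations are that you pre-bound $|Q_C|\le c|x|^{-3}$ via \eqref{naCf518} where the paper instead absorbs the $f$-dependent terms at the extremum point, and you use a single constant $K_C$ in place of the paper's separate constants $\be$ and $\La$ for the two barriers.
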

\begin{proof}
Suppose that $|A_C|^2<\f1{100}$ on $C\setminus B_{R_{8}}$ for a sufficiently large constant $R_{8}\ge R_4$.
Let $\be$ be a constant depending on $R_8$ such that $\be\le K_0\triangleq-\max\{c(\sup_{\Si}H_\Si+1),\a_0\}$ and
$\inf_{\p B_{R_8}}\left(f-\z_{\be}\right)\ge0.$
We claim
\begin{equation}\aligned\label{5.76fzbe}
\inf_{z\in C\setminus B_{R_8}}\left(f(z)-\z_{\be}(z)\right)\ge0
\endaligned
\end{equation}
for the sufficiently large $R_8>0$.
If not, there is a point $z=\xi r\in C\setminus B_{R_8}$ such that
\begin{equation}\aligned
f(z)-\z_{\be}(z)=\inf_{C\setminus B_{R_8}}\left(f-\z_{\be}\right)<0,
\endaligned
\end{equation}
then $\na_Cf=\na_C\z_{\be}$, and $\De_C(f-\z_{\be})\ge0$ at $z$. Note that $f$ is positive and $|z|=r>R_8$, then by \eqref{LCDeCACQf} and \eqref{LCphiaHCr-3} at $z$ we get
\begin{equation}\aligned
\left(\f1{100}-\f12\right)(f-\z_{\be})\le&\De_C(f-\z_{\be})+\f12\lan z,\na_C(f-\z_{\be})\ran+\left(|A_C|^2-\f12\right)(f-\z_{\be})\\
=&L_Cf-L_C\z_{\be}=Q(z,f,\na_Cf,\na^2_Cf)-L_C\z_{\be}-H_C\\
\le&\f{c}{|z|^{3}}\left(|z|\cdot|f|+|\na_C f|\right)+\f {\be}{|z|^3}\\
\le&\f{c}{|z|^{3}}\left(|z|\cdot\left|f-\z_{\be}\right|+|z|\cdot\left|\z_{\be}\right|+|\na_C\z_{\be}|\right)+\f {\be}{|z|^3},
\endaligned
\end{equation}
which implies
\begin{equation}\aligned
0\le\f{c}{|z|^{3}}\left(|z|\cdot\left|\z_{\be}\right|+|\na_C\z_{\be}|\right)+\f {\be}{|z|^3}
\endaligned
\end{equation}
for the sufficiently large $R_8>0$. By the definition of $\z_{\be}$ we get
\begin{equation}\aligned\label{HSiKr23}
0<\f{c}{|z|^{3}}\left(H_\Si-\f {\be}{|z|^2}+\left|\na_C\left(\f{H_\Si}{|z|}\right)\right|-\f{4{\be}}{|z|^4}\right)+\f {\be}{|z|^3}.
\endaligned
\end{equation}
For the sufficiently large $R_8>0$, \eqref{HSiKr23} does not hold for $\be\le K_0$ and $r\ge R_8$. Hence we get the claim \eqref{5.76fzbe}.

There is a constant $\La\ge-K_0>0$ depending on $R_8$ such that
$\sup_{\p B_{R_8}}\left(f-\z_{\La}\right)\le0.$
We claim
\begin{equation}\aligned\label{5.81fLa}
\sup_{z\in C\setminus B_{R_8}}(f(z)-\z_\La(z))\le0,
\endaligned
\end{equation}
for the sufficiently large $R_8>0$.
If not, there is a point $z'\in C\setminus \overline{B_{R_8}}$ such that
\begin{equation}\aligned
f(z')-\z_\La(z')=\sup_{C\setminus B_{R_8}}\left(f-\z_\La\right)>0,
\endaligned
\end{equation}
then $\na_Cf=\na_C\z_\La$, and $\De_C(f-\z_\La)\le0$ at $z'$. However, by \eqref{LCDeCACQf} and \eqref{LCphiaHCr-3} at $z'$
\begin{equation}\aligned
\left(\f1{100}-\f12\right)(f-\z_\La)\ge&\De_C(f-\z_\La)+\f12\lan z',\na_C(f-\z_\La)\ran+\left(|A_C|^2-\f12\right)(f-\z_\La)\\
=&L_Cf-L_C\z_\La=Q(z',f,\na_Cf,\na^2_Cf)-L_C\z_\La-H_C\\
\ge&-\f{c}{|z'|^{3}}\left(|z'|f+|\na_Cf|\right)+\f{\La}{|z'|^3}\\
=&-\f{c}{|z'|^2}(f-\z_\La)-\f{c}{|z'|^{3}}\left(|z'|\z_\La+|\na_C\z_\La|\right)+\f{\La}{|z'|^3}.
\endaligned
\end{equation}
For the sufficiently large $R_8>0$ we have
\begin{equation}\aligned
0>&-\f{c}{|z'|^{3}}\left(H_\Si+\f{\La}{|z'|^2}+\left|\na_C\left(\f{H_\Si}{|z'|}\right)\right|+\f{4\La}{|z'|^4}\right)+\f{\La}{|z'|^3}.
\endaligned
\end{equation}
The above inequality does not hold for any $\La\ge R_8$ and $|z'|\ge R_8$ if $R_8$ is sufficiently large. Therefore, \eqref{5.81fLa} holds,
and we complete the proof.
\end{proof}

\section{Existence for self-expanders}

By the dimension estimates of singular sets for minimizing currents in codimension 1 (see Theorem 37.7 in \cite{S} for example),
if $T$ is an \textbf{E}-minimizing self-expanding current in $\R^{n+1}$, then $\mathcal{H}^{s}(\mathrm{Sing}T)=0$ for any $s\ge\max\{0,n-7+\a\}$ with $\a>0$ because $T$ is a minimizing current in $\left(\R^{n+1},e^{\f{|X|^2}{2n}}\sum_{i=1}^{n+1}dx_i^2\right)$ with $|X|^2=\sum_{i=1}^{n+1}x_i^2$.
There are quite a few of works on maximum principle for singular minimal hypersurfaces in smooth manifolds.
For convenience, we need the following maximum principle obtained by Solomon-White in \cite{SW}.
\begin{lemma}\label{mpbySolomon-W}
Let $\Om$ be a domain in a space $N$ with the smooth boundary $\p\Om$, and $T$ be a stationary varifold in $\overline{\Om}$, where $N=\left(\R^{n+1},e^{\g|X|^2}\sum_{i=1}^{n+1}dx_i^2\right)$ for a constant $\g\ge0$. If there is a point $x\in \mathrm{spt}T\cap\p\Om\setminus\p(\mathrm{spt}T)$, and $\p\Om$ has nonnegative mean curvature pointing into $\Om$ in a neighborhood $O_x$ of $x$, then $\mathrm{spt}T\cap O_x =\p\Om\cap O_x$.
\end{lemma}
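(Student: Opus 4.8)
The plan is to treat the weighted space $N=\left(\R^{n+1},e^{\g|X|^2}\sum_i dx_i^2\right)$ as a smooth complete Riemannian manifold (indeed of nonpositive sectional curvature, as already noted), so that ``stationary in $\overline{\Om}$'' and ``mean curvature of $\p\Om$'' are read off with respect to the metric of $N$, and then to run a strong-maximum-principle argument in the spirit of \cite{SW}. First I would localise: shrinking $O_x$ if necessary, I may assume that $\p\Om\cap O_x$ is a connected smooth hypersurface whose mean curvature vector is nonnegative and points into $\Om$, that $\mathrm{spt}T\cap O_x$ carries no boundary (here the hypothesis $x\notin\p(\mathrm{spt}T)$ enters), and that $\mathrm{spt}T\cap O_x\subset\overline{\Om}$, so that the support lies on one side of $\p\Om$ and touches it at the interior point $x$. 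The goal then reduces to the purely local assertion that $\mathrm{spt}T$ and $\p\Om$ coincide in a neighbourhood of $x$.

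To package this I would argue on the contact set $A=\p\Om\cap O_x\cap\mathrm{spt}T$, which is closed in $\p\Om\cap O_x$ and contains $x$. It suffices to show $A$ is open, for then connectedness of $\p\Om\cap O_x$ gives $A=\p\Om\cap O_x$, and one-sidedness together with the absence of boundary upgrades this to $\mathrm{spt}T\cap O_x=\p\Om\cap O_x$. Openness of $A$ is exactly a touching statement at an arbitrary $y\in A$: a stationary varifold whose support lies on the side of a smooth hypersurface toward which its mean curvature vector points, and which meets it at an interior boundary-free point, must coincide with that hypersurface near $y$.

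For the touching statement I would use the signed distance $\rho$ to $\p\Om$ in $N$, positive in $\Om$, whose level sets $\Gamma_s=\{\rho=s\}$ foliate a one-sided tubular neighbourhood. The Riccati (Jacobi) equation for the mean curvature along the normal geodesics, combined with $H_{\p\Om}\ge0$, yields $H_{\Gamma_s}\ge -Cs$, so each $\Gamma_s$ is, up to a controllably small error, a barrier with the correct mean-curvature sign. Testing the first variation of $T$ against a field $Y=\eta\,\phi(\rho)\,\na\rho$ (with $\eta$ a cutoff and $\phi\ge0$ nonincreasing) and using $\mathrm{div}_T\na\rho=\mathrm{tr}\big(\mathrm{Hess}\,\rho|_{T}\big)$, the identity $\int_{\mathrm{spt}T}\mathrm{div}_T Y\,d\mu_T=0$ produces a differential inequality for $\int_{\{\rho<s\}}\phi(\rho)\,d\mu_T$ that forces $\na_T\rho=0$ and hence $\rho\equiv0$ on $\mathrm{spt}T$ near $y$. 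This Hopf-type estimate, carried out for a possibly singular $T$, is precisely the content of \cite{SW}, so in practice I would verify the hypotheses above and then invoke their theorem.

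The main obstacle is this last step: the Hopf-type inequality must be established without any regularity of $T$, so one cannot fall back on the classical maximum principle for graphs and must instead control the first variation directly against the distance foliation and argue that the support cannot strictly pull away from the barrier. By comparison, the weighted ambient metric is not a serious difficulty: the conformal factor $e^{\g|X|^2}$ is smooth and contributes only bounded lower-order corrections to the mean curvature of the barriers $\Gamma_s$, which do not affect the sign for small $s$, so the argument transfers to $N$ essentially verbatim from the Euclidean case treated in \cite{SW}.
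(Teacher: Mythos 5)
Your proposal is correct and takes essentially the same route as the paper: the paper gives no proof of this lemma at all, stating only that it ``can be found in \cite{SW} obtained by Solomon-White'', and your reduction --- localising at the touching point, placing $\mathrm{spt}T$ on one side of the mean convex barrier $\p\Om$ with no boundary nearby, and invoking the Solomon--White strong maximum principle, with the conformal factor $e^{\g|X|^2}$ absorbed as a smooth even parametric elliptic integrand (equivalently, a bounded lower-order correction to the barrier mean curvatures) --- is precisely how the citation is meant to be applied. Your sketch of the signed-distance foliation and the first-variation test against $\e\,\phi(\r)\na\r$ is a faithful summary of the mechanism inside \cite{SW} itself, so nothing further is needed.
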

Let us recall a sharp strong maximum principle for singular minimal hypersurfaces in manifolds, recently showed by N. Wickramasekera \cite{Wi13} (the ones in \cite{Wi} or \cite{Il} are also sufficient for $\mathbf{E}$-minimizing cases by dimension estimates). In particular, we can choose the ambient manifold to be  $\left(\R^{n+1},e^{\f{|X|^2}{2n}}\sum_{i=1}^{n+1}dx_i^2\right)$.
\begin{lemma}\label{msem}
Let $C_V$ be a stationary cone in $\R^{n+1}$ with integer multiplicity and with singular set $\mathrm{Sing}C_V$ satisfying $\mathcal{H}^{n-1}(\mathrm{Sing}C_V)=0$, and $\Om$ be a domain with $\p\Om=\mathrm{spt}C_V$. If $T$ is an $n$-dimensional integer varifold self-expander in $\overline{\Om}$ with $\p T=0$, then either $\mathrm{spt}T\cap \mathrm{spt}C_V=\emptyset$ or $\mathrm{spt}T=\mathrm{spt}C_V$.
\end{lemma}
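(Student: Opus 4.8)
The plan is to recognize that $C_V$ is \emph{itself} a self-expander, and then to regard $T$ and $C_V$ as two stationary integral varifolds in the weighted ambient manifold $N=\left(\R^{n+1},e^{\f{|X|^2}{2n}}\sum_i dx_i^2\right)$, so that the strong maximum principles recalled just above become applicable. Under this viewpoint the claimed dichotomy is exactly a strong-maximum-principle statement for two minimal hypersurfaces of $N$ lying on one side of each other.

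First I would verify that $C_V$ is a varifold self-expander. Since $C_V$ is a cone with vertex at the origin, at each regular point $x$ the generating ray lies in $\mathrm{spt}C_V$, so the position vector is tangent to $\mathrm{spt}C_V$ and hence $X^N=0$ there. As $C_V$ is stationary in $\R^{n+1}$ its Euclidean generalized mean curvature vanishes, and the weak equation \eqref{WeakSE} holds trivially because its right-hand side $-\f12\int\lan X^N,Y\ran$ is zero. Thus $C_V$ is an integer varifold self-expander, and by the correspondence recalled in Section 2 both $T$ and $C_V$ are stationary integral $n$-varifolds in $N$. In particular $\p\Om=\mathrm{spt}C_V$ is a generalized minimal hypersurface of $N$ with vanishing (hence nonnegative) mean curvature, and it satisfies the smallness hypothesis $\mathcal{H}^{n-1}(\mathrm{Sing}C_V)=0$.

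Next, assuming $\mathrm{spt}T\cap\mathrm{spt}C_V\neq\emptyset$, I would prove that the supports coincide. Since $\mathrm{spt}T\subset\overline{\Om}$, the varifold $T$ lies locally on one side of $\mathrm{spt}C_V$. At a contact point $x$ in the regular part $\mathrm{reg}\,C_V=\mathrm{spt}C_V\setminus\mathrm{Sing}C_V$, the boundary $\p\Om$ is smooth near $x$ with nonnegative mean curvature pointing into $\Om$, and $x\notin\p(\mathrm{spt}T)=\emptyset$ because $\p T=0$; Lemma \ref{mpbySolomon-W} then yields $\mathrm{spt}T\cap O_x=\p\Om\cap O_x$ on a neighborhood $O_x$ of $x$. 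Consequently the set $\{x\in\mathrm{reg}\,C_V:\ x\in\mathrm{spt}T\}$ is open in $\mathrm{reg}\,C_V$; it is also closed there since $\mathrm{spt}T$ is closed. Propagating the local coincidence by the sharp unique continuation along each connected component of $\mathrm{reg}\,C_V$, I obtain that $\mathrm{spt}T$ and $\mathrm{spt}C_V$ agree near every regular point reached from $x$.

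The hard part is upgrading this local, regular coincidence into the global dichotomy across the singular set: a priori the two supports might meet only over $\mathrm{Sing}C_V$, or $T$ might carry extra sheets touching $C_V$ only at singular points, so Lemma \ref{mpbySolomon-W} alone does not close the argument. This is precisely where I would invoke the sharp strong maximum principle of Wickramasekera \cite{Wi13} (the weaker forms of \cite{Wi} or \cite{Il} also suffice by the dimension estimates), whose hypothesis $\mathcal{H}^{n-1}(\mathrm{Sing}C_V)=0$ is met: two stationary integral varifolds of $N$ lying on one side of each other and touching must have coinciding supports near the touching point, whether it is regular or singular. Combining this interior statement with the boundary statement of Lemma \ref{mpbySolomon-W}, and using that $T$ is confined to $\overline{\Om}$ with $\p T=0$ to exclude stray interior sheets, promotes the clopen coincidence set to all of $\mathrm{spt}C_V$. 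Hence $\mathrm{spt}T=\mathrm{spt}C_V$ whenever the two supports meet, and otherwise they are disjoint, which is the asserted dichotomy.
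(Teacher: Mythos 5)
Your proposal is correct and takes essentially the same route as the paper: the paper offers no independent proof of Lemma \ref{msem}, recording it instead as a direct citation of Wickramasekera's sharp strong maximum principle \cite{Wi13} applied in the weighted space $\left(\R^{n+1},e^{\f{|X|^2}{2n}}\sum_{i=1}^{n+1}dx_i^2\right)$, where both $T$ and $C_V$ (a stationary cone has $X^N=0$ $\mu$-a.e., hence is a varifold self-expander) become stationary integral varifolds lying on one side of each other — precisely your reduction, including the verification that the hypothesis $\mathcal{H}^{n-1}(\mathrm{Sing}C_V)=0$ is what the cited theorem needs. Your intermediate scaffolding (Lemma \ref{mpbySolomon-W} at regular contact points plus the clopen/unique-continuation propagation) is harmless but redundant, since Wickramasekera's theorem alone delivers the dichotomy at regular and singular touching points alike.
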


\subsection{Existence for \textbf{E}-minimizing self-expanding currents}
For any current $S$ in $\R^{n+1}$, let $S\setminus K=S\llcorner\left(\R^{n+1}\setminus K\right)$ for any set $K\in\R^{n+1}$.
Here, the notation '$\llcorner$' can be found in Chapter 26 in \cite{S}.

Ilmanen \cite{Il1} proved the existence of self-expanding hypersurfaces with prescribed tangent cones at infinity. For completeness, we give another proof for $C^2$-regular cones by constructing a family of barrier functions.
\begin{theorem}\label{WeakexistSE}
Let $C$ be a cone over an embedded $C^2$-regular hypersurface in $\S^n$, then there is a multiplicity one \textbf{E}-minimizing self-expanding current $T$ in $\R^{n+1}$ with $\p T=0$
such that the tangent cone of $T$ at infinity is $\llbracket C\rrbracket$. 
\end{theorem}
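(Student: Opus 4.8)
The plan is to obtain $T$ as a limit of weighted-area minimizers on an exhaustion of $\R^{n+1}$ by balls, confining the minimizers to a fixed neighborhood of $C$ by explicit barriers and thereby forcing the tangent cone at infinity to be $\llbracket C\rrbracket$.

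First, for each large $R$ I would minimize $\mathbf{E}_{B_R}$ over integer multiplicity rectifiable currents $S$ in $\overline{B_R}$ with $\partial S=\partial(\llbracket C\rrbracket\llcorner B_R)$, the outer slice of the cone by $\partial B_R$. On the compact set $\overline{B_R}$ the weight $e^{|X|^2/4}$ lies between two positive constants, so $\mathbf{E}_{B_R}$ is comparable to ordinary mass and the Federer--Fleming compactness and lower-semicontinuity theory (see \cite{S}) produces a minimizer $T_R$. Since $T_R$ is mass-minimizing for the conformally flat metric $e^{|X|^2/(2n)}\sum_i dx_i^2$, interior regularity gives $\dim\mathrm{Sing}\,T_R\le n-7$ with $\mathrm{spt}\,T_R\setminus\mathrm{Sing}\,T_R$ a smooth self-expander, and comparing with the cone itself yields the a priori bound $\mathbf{E}_{B_R}(T_R)\le\mathbf{E}_{B_R}(\llbracket C\rrbracket\llcorner B_R)$.

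Second --- the heart of the argument --- I would trap $T_R$ inside the fixed-width tube $C_\delta=\{X:\mathrm{dist}(X,C)<\delta\}$ uniformly in $R$. Because $\Sigma$ is $C^2$ and embedded one has $|A_C|\le c|x|^{-1}$, and $C_{2\delta}\setminus B_{R_0}$ admits Fermi coordinates $X=x+s\,\nu_C(x)$ with $x\in C\setminus\{0\}$; a normal graph $\{s=g\}$ is a self-expander exactly when $L_Cg+H_C$ equals the quadratic remainder $Q_C$ of Lemma~\ref{5.4Asp}, where $L_C$ is the operator \eqref{LSFSE}, \eqref{LCDeSiLsf}. Taking the constant comparison functions $g\equiv\pm\delta$, a direct computation using $\langle x,\nu_C\rangle=0$ (as $C$ is a cone) shows that the weighted mean curvature of $\{s=\pm\delta\}$ pointing into $C_\delta$ is, to leading order, $\tfrac12\delta-H_C>0$ for small $\delta$ and large $|x|$; hence $\{s=+\delta\}$ and $\{s=-\delta\}$ are mean-convex barriers for the tube in the metric $e^{|X|^2/(2n)}\sum_i dx_i^2$, in which both $T_R$ and the barriers sit. (Where $C$ is a Lipschitz graph these may instead be taken to be the entire graphic self-expanders of Theorem~\ref{Graphuni}, together with the rotational barriers $M_\kappa$ and the decay \eqref{phicomp}.) Once $R$ is large enough that $\partial T_R$ lies strictly inside this pair, the Solomon--White strong maximum principle Lemma~\ref{mpbySolomon-W} forbids interior contact of the stationary $T_R$ with either barrier, so $\mathrm{spt}\,T_R\setminus B_{R_0}\subset C_\delta$, the multiplicity of $T_R$ is one there, and the local mass of $T_R$ is bounded independently of $R$.

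Finally I would let $R\to\infty$. The uniform local $\mathbf{E}$-bounds give, along a subsequence, a locally $\mathbf{E}$-minimizing integer current $T$ with $\partial T=0$, of multiplicity one, satisfying $\mathrm{spt}\,T\setminus B_{R_0}\subset C_\delta$. Since the tube has fixed width, $\tfrac1r(\mathrm{spt}\,T\setminus B_{R_0})$ lies in the shrinking tube $\{\mathrm{dist}(\cdot,C)<\delta/r\}$ (using $\tfrac1r C=C$), so every subsequential limit of $\tfrac1rT$ is supported in $C$; the monotonicity formula \eqref{monSEr-nBr} guarantees that $\tfrac1rT$ does converge as $r\to\infty$ to a unique stationary cone, which must therefore be $\llbracket C\rrbracket$. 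I expect the main obstacle to be the second step: producing barriers that genuinely follow the curvature of the possibly non-graphical cone $C$, with the correct weighted-mean-curvature sign down to the relevant scale, and ruling out tangential contact of the singular minimizer $T_R$ with them --- it is precisely here that embeddedness of $\Sigma$, the decay $|A_C|\le c|x|^{-1}$, and the strong maximum principle for singular minimal hypersurfaces enter.
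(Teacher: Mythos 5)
Your overall skeleton coincides with the paper's: exhaust $\R^{n+1}$ by balls, minimize the weighted area with the cone as boundary data, trap the minimizers near $C$ by barriers plus the strong maximum principle, pass to a limit by compactness, and identify the tangent cone at infinity via monotonicity. The genuine gap is in your second step, exactly where you predicted trouble. The sentence ``once $R$ is large enough that $\partial T_R$ lies strictly inside this pair, the Solomon--White strong maximum principle forbids interior contact, so $\mathrm{spt}\,T_R\setminus B_{R_0}\subset C_\delta$'' is a non sequitur: Lemma \ref{mpbySolomon-W} applies only to a varifold already known to lie in $\overline{\Om}$ on one side of the mean convex wall, and having $\partial T_R$ inside the tube does not place $\mathrm{spt}\,T_R$ on one side of $\{s=\pm\delta\}$ --- nothing a priori prevents the support from crossing the tube walls transversally. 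To convert mean convexity into containment you need a sliding family swept in from a region that trivially contains the minimizer, and the constant-distance tubes do not furnish one: the parallel hypersurface $\{s=\delta'\}$ is weighted-mean-convex toward $C$ only where $|x|\gtrsim c/\delta'$ (since $|H_C|\sim c/|x|$ must be beaten by $\delta'/2$), so as you slide $\delta'$ down from $\sim R$ to $\delta$ a first touching can occur over the $\delta'$-dependent compact set near the vertex where the sign fails, and the sliding stalls there, yielding no trapping anywhere --- not even outside $B_{R_0}$. Worse, for large $\delta'$ the level sets of $\mathrm{dist}(\cdot,C)$ develop cut-locus singularities, so they are not smooth hypersurfaces to which the lemma applies; and your graphical fallback via Theorem \ref{Graphuni} and $M_\kappa$ only covers cones that are entire Lipschitz graphs, which a general embedded $C^2$ cone is not.

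The paper's barrier construction is designed precisely to avoid this. Using the uniform two-sided cone condition coming from $C^2$-embeddedness ($C_{\kappa,\pm}$ tangent to $C$ along each ray), it places at every $y\in C\cap\S^n$ the regions $\mathcal{S}_{y_\pm}$ lying above rotational supersolution graphs $\kappa r+\mathrm{const}\cdot\min\{1,r^{-1}\}$ built from \eqref{Jkn-1k} and \eqref{phicomp}. These slide for free: since the zeroth-order term in \eqref{Ju} is $-\frac u2$, adding a constant $h>0$ gives $\mathcal{J}(w+h)\le\mathcal{J}w-\frac h2<0$, so every vertical translate remains a supersolution globally, including near the axis, and one can sweep from infinity down to the first contact without the sign ever degenerating. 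Moreover the union $\mathcal{E}_C=\bigcup_y(\mathcal{S}_{y_+}\cup\mathcal{S}_{y_-})$ covers the entire region far from $C$, which is what produces the uniform trapping \eqref{distpErpBs}--\eqref{dpErpBsC} with width decaying like $1/s$ (stronger than your fixed $\delta$, though fixed width would also suffice for the rescaling at infinity). A secondary weakness: you assert rather than prove that ``the multiplicity of $T_R$ is one there.'' In your formulation (minimizing among all integer currents with prescribed slice boundary) this needs an argument; the paper instead minimizes weighted perimeter among Caccioppoli sets $E_r$ with $\partial E_r\setminus B_r=C\setminus B_r$, so $T_R=\partial\llbracket E_r\rrbracket\llcorner B_r$ has multiplicity one automatically, and multiplicity one of the limit is then forced by pinching the area ratio between the monotonicity upper bound \eqref{ErBrrn} and the lower bound \eqref{kE*} extracted from Hausdorff convergence to $C$. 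With the trapping and multiplicity repaired along these lines, your final step (fixed-width tube rescales to width $\delta/r$, monotonicity plus the constancy theorem identify the tangent cone at infinity as $\llbracket C\rrbracket$) is sound.
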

\begin{proof}
By the assumption, principal curvature of $\Si\triangleq C\cap\S^{n}$ are uniformly bounded.
Then there is a uniform positive constant $0<\k<\f12$ depending only on $C$ so that for any $y\in C\setminus\{0\}$, we have two cones $C_{\k,\pm}$ centered at the origin isometric to $\left\{\left(x,\f{|x|}\k\right)\big| x\in\R^n\right\}$ such that $C_{\k,\pm}\cap C$ are the same radial line through $y$. Obviously, there are two unique points $y_\pm$ related to $y$ with $|y_\pm|=|y|$ such that $C_{\k,\pm}\subset\{X\in\R^{n+1}|\,\lan X,y_\pm\ran>0\}$ are rotational symmetric w.r.t. the lines $\{ty_\pm|\ t\in\R\}$, respectively.

By compactness theorem for currents (see \cite{LY}\cite{S} for instance),
there exists an open set $E_r\subset\R^{n+1}$ for any $r>0$ such that $\p E_r\setminus B_r=C\setminus B_r$ and
\begin{equation}\aligned
\int_{B_r}|D\chi_{_{E_r}}|e^{\f{|X|^2}4}dX=\inf\left\{\int_{B_r}|D\chi_{_U}|e^{\f{|X|^2}4}dX\Big|\ \p U\setminus B_r=C\setminus B_r\right\}.
\endaligned
\end{equation}
Hence $\p \llbracket E_r\rrbracket\llcorner B_r$ is an $n$-dimensional multiplicity one \textbf{E}-minimizing self-expanding current in $B_r$.
Then the singular set of $\p \llbracket E_r\rrbracket\llcorner B_r$ has Hausdorff dimension $\le n-7$ in case $n\ge7$ and is empty in case $n\le6$.
Let $P_{y_\pm}$ be the $n$-dimensional hyperplanes through the origin which is perpendicular to the vectors $y_\pm$, respectively.
Let $\phi_\la$ be a smooth solution to \eqref{Ju} with $\phi_\la'\ge \phi_\la'(0)=0$ and $\lim_{r\rightarrow\infty}\f{\phi_\la(r)}r=\la>0$.
For $\la\ge\f1\k$, we set $\mathcal{S}^\la_{y_\pm}$ being a rigid motion of a self expander defined by
$$\mathcal{S}^\la_{y_\pm}=\left\{z+ty_\pm\in\R^{n+1}\Big|\, z\in P_{y_\pm},\, t=\phi_{\la}(z)\right\}.$$
From Lemma \ref{phila0}, we consider the infimum of $\la$ such that $\p E_r\cap B_r\cap\left(\mathcal{S}^\la_{y_+}\cup \mathcal{S}^\la_{y_-}\right)=\emptyset$. We conclude that $\p E_r\cap B_r\cap\left(\mathcal{S}_{y_+}\cup \mathcal{S}_{y_-}\right)=\emptyset$ by maximum principle (Lemma \ref{mpbySolomon-W}), where $\mathcal{S}_{y_\pm}=\mathcal{S}^{1/\k}_{y_\pm}$. Let $\mathcal{E}_C=\bigcup_{y\in\S^n\cap C}\left(\mathcal{S}_{y_+}\cup \mathcal{S}_{y_-}\right)$, then we have
\begin{equation}\aligned\label{6ErBrEc}
\p E_r\cap B_r\cap \mathcal{E}_C=\emptyset.
\endaligned
\end{equation}
For any two sets $K_1,K_2$ in $\R^{n+1}$, we define their Hausdorff distance $d_H(K_1,K_2)$ by
\begin{equation*}\aligned
d_H(K_1,K_2)=&\max\left\{\sup_{x\in K_1}\mathrm{dist}(x,K_2),\sup_{x\in K_2}\mathrm{dist}(x,K_1)\right\}\\
=&\max\left\{\sup_{x\in K_1}\inf_{y\in K_2}|x-y|,\sup_{x\in K_2}\inf_{y\in K_1}|x-y|\right\}.
\endaligned
\end{equation*}
From the definition of $\mathcal{S}_{y_+}$ and \eqref{phicomp}, one has
\begin{equation}\aligned\label{6.2222}
\mathrm{dist}(y,\mathcal{S}_{y_+})\le\left(\f{n+1}{\k}+2\right)\min\{1,\k^{-1}|y|^{-1}\}
\endaligned
\end{equation}
for any $y\in C\setminus\{0\}$. In particular, $\mathrm{dist}(0,\mathcal{S}_{y_+})\le \f{n+1}{\k}+2$.
For any $y\in C$ with $|y|>\f{n+1}{\k}+2$, there is a point $y^*_+\subset\mathcal{S}_{y_+}\cap\p B_{|y|}$ such that
$$|y-y^*_+|=\mathrm{dist}(y,\mathcal{S}_{y_+}\cap\p B_{|y|}).$$
For any $y\in C$, the tangent space $T_yC$ of $C$ at $y$ is asymptotic to the tangent space $T_{y^*_+}\mathcal{S}_{y_+}$ of $\mathcal{S}_{y_+}$ at $y^*_+$ as $|y|\rightarrow\infty$. Let $c_\k>\f{n+1}{\k}+2$ be a sufficiently large constant, then from \eqref{6.2222} we have
$$|y-y_+^*|<2\ \mathrm{dist}(y,\mathcal{S}_{y_+})<\f{2(n+2)}{\k^2s}$$
for any $y\in C\cap\p B_s$ and $s\ge c_\k$.
From the definition of $\mathcal{E}_C$, we conclude
\begin{equation}\aligned\label{distpErpBs}
d_H(C\cap\p B_s,\mathcal{E}_C\cap\p B_s)<\f{2(n+2)}{\k^2s}
\endaligned
\end{equation}
for each $s\ge c_\k$.
Combining \eqref{6ErBrEc} and \eqref{distpErpBs}, we have
\begin{equation}\aligned\label{dpErpBsC}
d_H(\p E_r\cap\p B_s,\ C\cap\p B_s)\le \f{2(n+2)}{\k^2s}
\endaligned
\end{equation}
for every $s\in[c_\k,r]$. Let $M_r$ be the regular set of $\p \llbracket E_r\rrbracket\cap B_r$. By \eqref{laplace}, we have
\begin{equation}\aligned\label{6.62nHnMr}
2n\mathcal{H}^n(M_{r})\le\int_{M_{r}}\De_{M_r}|X|^2\le2\int_{\p M_{r}}|X|\le2r\mathcal{H}^{n-1}(\p M_{r}),
\endaligned
\end{equation}
where $\De_{M_r}$ denotes Laplacian of $M_r$.
Note that $\mathrm{dim}\left(\mathrm{Sing}\p \llbracket E_r\rrbracket\cap B_r\right)\le \max\{0,n-7\}$, then \eqref{6.62nHnMr} implies
\begin{equation}\aligned
\f{\mathcal{H}^n(\p E_{r}\cap B_r)}{r^n}\le\f{\mathcal{H}^{n-1}(\p E_{r}\cap\p B_r)}{nr^{n-1}}=\f{\mathcal{H}^{n-1}(C\cap\p B_r)}{nr^{n-1}}.
\endaligned
\end{equation}
From \eqref{monSEr-nBr}, it follows that
\begin{equation}\aligned\label{ErBrrn}
\f{\mathcal{H}^n(\p E_{\r}\cap B_\r)}{\r^n}\le\f{\mathcal{H}^{n-1}(\p E_{r}\cap\p B_r)}{nr^{n-1}}=\f{\mathcal{H}^{n-1}(C\cap\p B_r)}{nr^{n-1}}=\f1n\mathcal{H}^{n-1}(C\cap\p B_1)
\endaligned
\end{equation}
for all $0<\r\le r$.

By compactness theorem of currents (see \cite{LY} or \cite{S} for example), there is a sequence $r_i\rightarrow\infty$ such that $\p \llbracket E_{r_i}\rrbracket\llcorner B_{r_i}$ converges to an \textbf{E}-minimizing self-expanding current $T$, whose multiplicity is $k$ and support is $\p E_*$ for some open set $E_*$. According to \eqref{ErBrrn}, we have
\begin{equation}\aligned\label{kE*}
k\f{\mathcal{H}^n(\p E_*\cap B_\r)}{\r^n}\le\f1n\mathcal{H}^{n-1}(C\cap\p B_1)
\endaligned
\end{equation}
for all $\r>0$. From \eqref{dpErpBsC} it is clearly that
\begin{equation}\aligned
\lim_{r\rightarrow\infty}\f{\mathcal{H}^n(\p E_*\cap B_r)}{r^n}\ge\f1n\mathcal{H}^{n-1}(C\cap\p B_1).
\endaligned
\end{equation}
Thus the multiplicity $k=1$ and $T$ is a multiplicity one \textbf{E}-minimizing self-expanding current in $\R^{n+1}$ with $\p T=0$. Moreover, \eqref{kE*} implies that the tangent cone of $T$ at infinity is $\llbracket C\rrbracket$.
\end{proof}
From \eqref{dpErpBsC}, one has
\begin{equation}\aligned\label{dTsBsC}
\limsup_{s\rightarrow\infty}\big(sd_H(\mathrm{spt}\p T\cap\p B_s,\ C\cap\p B_s)\big)\le \f{2(n+2)}{\k^2},
\endaligned
\end{equation}
where $T,\k$ are as in the above Theorem.

\subsection{Area-minimizing cones and \textbf{E}-minimizing self-expanding currents}
In Theorem \ref{WeakexistSE}, for the \textbf{E}-minimizing self-expanding current $T$, $\mathrm{spt}T$ may be equal to the cone $C$ itself.
In fact, we can not find such $T$ when $C$ is area-minimizing.

\begin{lemma}\label{ammse}
Any non area-minimizing hypercone cannot be an \textbf{E}-minimizing self-expanding hypersurface. Moreover, if a minimal cone $C$ is an \textbf{E}-minimizing self-expanding hypersurface in $\overline{\Om}$, then it is an area-minimizing cone in $\overline{\Om}$, where $\Om$ is a component of $\R^{n+1}\setminus C$.
\end{lemma}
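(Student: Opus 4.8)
The plan is to prove the contrapositive of each assertion, exploiting the scale-invariance of a cone together with the fact that the weight $e^{\f{|X|^2}4}$ equals $1$ at the vertex. For the first statement I would assume the hypercone $C$ is \emph{not} area-minimizing; by definition there is then a radius $R>0$ and an integer multiplicity current $S$ with $\p S=\p(\llbracket C\rrbracket\llcorner B_R)$, $\mathrm{spt}(S-\llbracket C\rrbracket\llcorner B_R)\Subset B_R$, and $\mathbf{M}(S)<\mathbf{M}(\llbracket C\rrbracket\llcorner B_R)$; set $\de=\mathbf{M}(\llbracket C\rrbracket\llcorner B_R)-\mathbf{M}(S)>0$. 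For the ``Moreover'' part I would start instead from the hypothesis that $C$ is not area-minimizing in $\overline{\Om}$, which yields the same data except that now $\mathrm{spt}(S-\llbracket C\rrbracket\llcorner B_R)\subset\overline{\Om}\cap B_R$. The two cases then run in complete parallel, so I treat them together.

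The heart of the argument is a homothety. For $\la\in(0,1)$ let $\la S$ denote the pushforward of $S$ under $X\mapsto\la X$. Since $C$ is a cone, $\la C=C$ and $\la(\llbracket C\rrbracket\llcorner B_R)=\llbracket C\rrbracket\llcorner B_{\la R}$, hence $\p(\la S)=\p(\llbracket C\rrbracket\llcorner B_{\la R})$ and $\mathrm{spt}(\la S-\llbracket C\rrbracket\llcorner B_{\la R})\Subset B_{\la R}$. In the one-sided case the region $\overline{\Om}$ is itself scale-invariant, since $\la\overline{\Om}=\overline{\Om}$ for $\la>0$ because $\Om$ is a component of the scale-invariant set $\R^{n+1}\setminus C$; thus the difference stays inside $\overline{\Om}\cap B_{\la R}$. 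Consequently $\la S$, extended by $\llbracket C\rrbracket$ outside $B_{\la R}$, is an admissible competitor for $\llbracket C\rrbracket$ in the sense of the definition of $\mathbf{E}$-minimizing currents.

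It remains to compare the weighted masses. Writing $X=\la Y$ and $d\mu_{\la S}=\la^n\,d\mu_S$,
\begin{equation*}
\la^{-n}\mathbf{E}(\la S)=\int_{S}e^{\f{\la^2|Y|^2}4}\,d\mu_S\le e^{\f{\la^2R^2}4}\,\mathbf{M}(S),
\end{equation*}
while, estimating the weight below by $1$ on $B_R$,
\begin{equation*}
\la^{-n}\mathbf{E}(\llbracket C\rrbracket\llcorner B_{\la R})=\int_{\llbracket C\rrbracket\llcorner B_R}e^{\f{\la^2|Y|^2}4}\,d\mu_C\ge\mathbf{M}(\llbracket C\rrbracket\llcorner B_R).
\end{equation*}
Because both $S$ and $\llbracket C\rrbracket\llcorner B_R$ are supported in $\overline{B_R}$, the factor $e^{\f{\la^2|Y|^2}4}$ tends to $1$ uniformly there as $\la\to0$, so $e^{\f{\la^2R^2}4}\,\mathbf{M}(S)<\mathbf{M}(\llbracket C\rrbracket\llcorner B_R)$ once $\la$ is small enough, using $\de>0$. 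For such $\la$ we obtain $\mathbf{E}(\la S)<\mathbf{E}(\llbracket C\rrbracket\llcorner B_{\la R})$, contradicting the $\mathbf{E}$-minimality of $C$ (in $\R^{n+1}$, resp. in $\overline{\Om}$). This establishes both claims.

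The only delicate points, and the ones I expect to be the genuine (if modest) obstacle, are bookkeeping rather than analysis: verifying that the homothety preserves admissibility of the competitor, namely the boundary identity $\p(\la S)=\p(\llbracket C\rrbracket\llcorner B_{\la R})$ and, in the one-sided case, the support constraint $\mathrm{spt}(\la S-\llbracket C\rrbracket\llcorner B_{\la R})\subset\overline{\Om}$, for which the scale-invariance of both $C$ and $\Om$ is essential; and confirming that the strict mass deficit $\de$ survives multiplication by the weight, which follows from the uniform convergence $e^{\f{\la^2|Y|^2}4}\to1$ on the fixed compact set $\overline{B_R}$. Notably, no regularity or bounded-curvature hypothesis on $C$ enters, so the argument applies verbatim to minimal cones with singularities.
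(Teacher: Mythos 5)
Your proof is correct and rests on exactly the same mechanism as the paper's: the dilation invariance of the cone together with the fact that the weight $e^{\f{|X|^2}4}$ tends uniformly to $1$ under blow-down at the vertex. The paper phrases this in the direct direction, substituting $\widetilde{U}=\f1{\sqrt{t}}U$ to convert $\mathbf{E}$-minimality into minimality for the weights $e^{\f{t|X|^2}4}$ and letting $t\rightarrow0$, while you run the contrapositive by shrinking a fixed competitor with mass deficit $\de$ -- a cosmetic reorganization of the same argument, equally valid in the one-sided case since $\la\overline{\Om}=\overline{\Om}$.
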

\begin{proof}
Let $C$ be a minimal hypercone with possible singularities.  
If $C$ is also an \textbf{E}-minimizing self-expanding hypersurface in $\R^{n+1}$, then for any fixed $r>0$ and each $t>0$
\begin{equation}\aligned
\int_0^{r}\mathcal{H}^{n-1}(C\cap\p B_\r)e^{\f{t\r^2}4}d\r=&\int_0^{\sqrt{t}r}\mathcal{H}^{n-1}\left(C\cap\p B_{\f{s}{\sqrt{t}}}\right)e^{\f{s^2}4}\f{ds}{\sqrt{t}}\\
=&t^{-\f n2}\int_0^{\sqrt{t}r}\mathcal{H}^{n-1}(C\cap\p B_s)e^{\f{s^2}4}ds\\
\le&t^{-\f n2}\int_{B_{\sqrt{t}r}}\left|D\chi_{_{\sqrt{t}U}}(X)\right|e^{\f{|X|^2}4}dX\\
=&\int_{B_r}|D\chi_{_{U}}|e^{\f{t|X|^2}4}dX,
\endaligned
\end{equation}
where $U$ is any open set in $\R^{n+1}$ with $\p U\setminus B_r=C\setminus B_r$.
Letting $t\rightarrow0$ implies
\begin{equation}\aligned
\mathcal{H}^n(C\cap B_{r})\le \int_{B_r}|D\chi_{_{U}}|.
\endaligned
\end{equation}
Namely, $C$ is area-minimizing. Further, we suppose that a minimal cone $C$ is an $n$-dimensional \textbf{E}-minimizing self-expanding hypersurface in $\overline{\Om}$. Analog to the above argument, one can show that $C$ is an area-minimizing cone in $\overline{\Om}$. Hence we complete the proof.
\end{proof}


Furthermore, we have the following equivalence on multiplicity 1 area-minimizing cones and \textbf{E}-minimizing self-expanding hypersurfaces.
\begin{theorem}\label{equamcse}
Let $C$ be a regular minimal cone in $\R^{n+1}$ and $\Om$ be a domain with $\p\Om=C$, then $C$ is area-minimizing in $\overline{\Om}$
if and only if $C$ is an \textbf{E}-minimizing self-expanding hypersurface in $\overline{\Om}$.
\end{theorem}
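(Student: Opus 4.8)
The backward implication is already in hand: if $C$ is an \textbf{E}-minimizing self-expanding hypersurface in $\overline{\Om}$, then Lemma \ref{ammse} immediately gives that $C$ is area-minimizing in $\overline{\Om}$. So the entire content is the forward implication, and the plan is to calibrate $C$ in the conformal metric $N=(\R^{n+1},\bar{g})$, $\bar{g}=e^{|X|^2/(2n)}\sum_{i=1}^{n+1}dx_i^2$, in which self-expanders are exactly the minimal hypersurfaces and the \textbf{E}-mass coincides with the $\bar{g}$-area (the $n$-dimensional $\bar{g}$-area element is $e^{|X|^2/4}d\mathcal{H}^n$). Thus it suffices to produce a one-sided calibration of $C$ in $N$, namely a vector field on $\Om$ whose $\bar{g}$-norm is $\le 1$, whose $N$-divergence is nonnegative, and whose boundary value along $C$ is the inward $\bar{g}$-unit normal of $C$.

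To build it, I would start from a Euclidean calibration coming from the area-minimizing hypothesis. Since $C$ is area-minimizing in $\overline{\Om}$, the one-sided minimizer of Hardt-Simon \cite{HaS} provides a smooth embedded minimizing hypersurface $S\subset\Om$ asymptotic to $C$ whose dilates $\{\lambda S\}_{\lambda>0}$ foliate $\Om$; let $\lambda(X)$ be the ($1$-homogeneous) leaf parameter and $\xi_0=\nabla\lambda/|\nabla\lambda|$ the unit normal to the foliation, oriented to agree with the inward unit normal $\nu_C$ of $C$. Because the leaves are minimal, $\div\,\xi_0=0$, and because $\lambda$ is $1$-homogeneous, Euler's relation $\langle X,\nabla\lambda\rangle=\lambda$ gives $\langle X,\xi_0\rangle=\lambda/|\nabla\lambda|\ge0$ on $\Om$. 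Now set $\bar{\xi}=e^{-|X|^2/(4n)}\xi_0$. A direct computation with $\sqrt{\det\bar{g}}=e^{(n+1)|X|^2/(4n)}$ shows $|\bar{\xi}|_{\bar{g}}=1$ and
\begin{equation*}
\div_N\bar{\xi}=e^{-(n+1)|X|^2/(4n)}\,\div\!\left(e^{n|X|^2/(4n)}\xi_0\right)=\tfrac12 e^{-|X|^2/(4n)}\langle X,\xi_0\rangle\ge0,
\end{equation*}
using $\div\,\xi_0=0$ and $\nabla\!\left(n|X|^2/(4n)\right)=X/2$. Along $C$ one has $\bar{\xi}=e^{-|X|^2/(4n)}\nu_C=\bar{\nu}_C$, as required.

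Finally I would run the standard one-sided calibration argument in $N$: given any competitor $T'$ with $\p T'=\p C$ and $\mathrm{spt}(T'-C)$ compact in $K\cap\overline{\Om}$, apply the divergence theorem for $\bar{\xi}$ over the region $W\subset\Om$ enclosed between $C$ and $\mathrm{spt}\,T'$. Since $\bar{\xi}=\bar{\nu}_C$ on $C$, $|\bar{\xi}|_{\bar{g}}\le1$ elsewhere, and $\div_N\bar{\xi}\ge0$, this yields $\textbf{E}_K(C)\le\textbf{E}_K(T')$, i.e. $C$ is \textbf{E}-minimizing in $\overline{\Om}$. The main obstacle is making this rigorous across the singular set: $\xi_0$ and the foliation are singular at the origin and along the rays over $\mathrm{Sing}\,C$, so I must excise $\varepsilon$-neighborhoods of these sets and check that the flux through their boundaries tends to $0$ (routine, since $|\bar{\xi}|_{\bar{g}}$ is bounded and these sets have Hausdorff dimension $\le n-7$), and I must pin down the orientation so that the inward normal really corresponds to increasing $\lambda$ and hence to $\langle X,\xi_0\rangle\ge0$. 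The flat case $C=$ hyperplane, where the foliation is by parallel translates rather than dilates, should be recorded separately; there $\xi_0$ is the constant inward normal and $\langle X,\xi_0\rangle$ is the corresponding height function, which is again nonnegative on $\Om$, so the same computation applies.
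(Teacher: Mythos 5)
Your proposal is correct in substance, and it proves the forward implication by a genuinely different mechanism than the paper. The backward direction via Lemma \ref{ammse} is exactly what the paper does. For the forward direction the paper does not calibrate: it takes the \textbf{E}-minimizing currents $T_r$ in $\overline{\Om}$ with boundary $C\cap\p B_r$ produced by the existence/compactness theory, supposes $\mathrm{spt}T_r\neq C$, and slides the dilates $r^{-1}\left(\mathrm{spt}T_{r_1}\right)$ inward until a first touching with the Hardt--Simon minimizer $S\subset\Om$; since $S$ is minimal and star-shaped it is mean convex in the conformal metric $\left(\R^{n+1},e^{\f{|X|^2}{2n}}\sum_i dx_i^2\right)$, and the Solomon--White maximum principle (Lemma \ref{mpbySolomon-W}) yields a contradiction, forcing $\mathrm{spt}T_r=C$ for all $r$. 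Your sub-calibration is built from exactly the same input --- the Hardt--Simon foliation $\{\la S\}_{\la>0}$ and the star-shapedness $\lan X,\xi_0\ran=\la/|\na\la|>0$ --- but converts it into a divergence inequality instead of a touching argument. Your computations check out: $|\bar{\xi}|_{\bar g}=1$, $\div\,\xi_0=0$ because the leaves are minimal and $|\xi_0|=1$, $\sqrt{\det\bar g}=e^{(n+1)|X|^2/(4n)}$ gives $\div_N\bar{\xi}=\f12e^{-|X|^2/(4n)}\lan X,\xi_0\ran\ge0$, the orientation is pinned correctly ($\la=0$ on $C$, $\la>0$ in $\Om$, so $\na\la$ points inward and $\xi_0\rightarrow\nu_C$ by the smooth convergence $\la S\rightarrow C$ away from the vertex), and the signs in the flux identity over the enclosed region $W\subset\Om$ do produce $\textbf{E}_K(C)\le\textbf{E}_K(T')$. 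What your route buys: it verifies the minimizing inequality against every competitor directly, with no need for the existence of the minimizers $T_r$, compactness of currents, or the strong maximum principle for singular varifolds, and it exposes star-shapedness as precisely the sub-calibration condition. What the paper's route buys: by comparing with a minimizer obtained from compactness, minimality against arbitrary competitors is automatic, so no flux bookkeeping over singular regions is required. (Two small remarks: for an \emph{embedded} cone in this paper $\mathrm{Sing}\,C=\{0\}$, so your caution about rays over $\mathrm{Sing}\,C$ is vacuous; and the flat case needs no separate treatment, since the parallel hyperplanes $\{x_{n+1}=\la d_0\}$ \emph{are} the dilates of $S=\{x_{n+1}=d_0\}$ and $\la$ is still $1$-homogeneous.)

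One step you wave at with ``the standard one-sided calibration argument'' should be made explicit, because the paper's definition of \textbf{E}-minimizing allows arbitrary integer-multiplicity competitors $T'$, for which the naive ``region between $C$ and $T'$'' can carry signed multiplicity: writing $T'-C\llcorner K=\p\llbracket\th\rrbracket$ with $\th$ integer-valued BV (normalized to vanish at infinity, hence supported in $\overline{\Om}$), $\th$ need not be nonnegative --- e.g.\ $T'=C\llcorner K-\p\llbracket B\rrbracket$ with $B\Subset\Om$ --- so $\llbracket\th\rrbracket(d\omega)$ can be negative and the divergence theorem alone does not close the argument. The standard repair in codimension one: with $u=\chi+\th$ ($\chi$ the indicator of the region bounded by $C$ on the relevant side) one has $T'=\p\llbracket u\rrbracket$ as currents, hence $\mu_{T'}=|Du|$, and the BV coarea identity $|Du|=\sum_{j\in\Z}\left|D\chi_{\{u\ge j\}}\right|$ lets you discard the layers $j\neq1$ (whose boundaries are compactly supported cycles, contributing nonnegative weighted mass) and apply your flux inequality to the single set competitor $\{u\ge1\}$, which is contained in $\overline{\Om}$ by connectedness of the complementary component. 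With that reduction, and your excision of a small ball about the origin (harmless, since $|\bar{\xi}|_{\bar g}\le1$ and the weighted area of $\p B_\ep$ tends to $0$), the proof is complete.
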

\begin{proof}
In view of Lemma \ref{ammse}, we only need prove that if $C$ is area-minimizing in $\overline{\Om}$, $C$ would be an \textbf{E}-minimizing self-expanding hypersurface in $\overline{\Om}$.
Let $C$ be a regular, area-minimizing cone in $\overline{\Om}$. Assume that $T_r$ is a multiplicity one \textbf{E}-minimizing self-expanding current in $\overline{\Om}$ with $\mathrm{spt}T_r\setminus C\neq\emptyset$ and with $\p \left(\mathrm{spt}T_r\right)=C\cap\p B_r$ for each $r>0$.
By \cite{HaS}, there is a connected smooth embedded, star-shaped, area-minimizing hypersurface $S$ without boundary in $\Om$. By \eqref{dpErpBsC}, $r_1^{-1}\left(\mathrm{spt}T_{r_1}\right)\cap S=\emptyset$ for a sufficiently large constant $r_1>0$. There is a constant $r_2\in(0,r_1)$ such that $r_2^{-1}\left(\mathrm{spt}T_{r_1}\right)\cap S\neq\emptyset$ but $r^{-1}\left(\mathrm{spt}T_{r_1}\right)\cap S=\emptyset$ for $r>r_2$. Note that $r_2^{-1}\left(\mathrm{spt}T_{r_1}\right)$ has mean curvature
\begin{equation}\aligned\label{H**}
H=\f{1}2r_2^2X^N.
\endaligned
\end{equation}
If we see $r_2^{-1}\left(\mathrm{spt}T_{r_1}\right)$ being a hypersurface in the space $\left(\R^{n+1},e^{\f{r_2^2}{2n}|X|^2}\sum_{i=1}^{n+1}dx_i^2\right)$ with the induced metric, then the corresponding mean curvature is zero from \eqref{H**}.

Let $\Om_S$ be a domain containing $r_1^{-1}\left(\mathrm{spt}T_{r_1}\right)$ in $\R^{n+1}$ with boundary $\p\Om_S=S$. If we see $S$ being a hypersurface in the space $\left(\R^{n+1},e^{\f{r_2^2}{2n}|X|^2}\sum_{i=1}^{n+1}dx_i^2\right)$ with induced metric and mean curvature, then it has positive mean curvature pointing into $\R^{n+1}\setminus\Om_S$ in $\left(\R^{n+1},e^{\f{r_2^2}{2n}|X|^2}\sum_{i=1}^{n+1}dx_i^2\right)$ as $S$ is star-shaped and minimal in Euclidean space.  By Lemma \ref{mpbySolomon-W}, we get a contradiction. Hence, $\mathrm{spt}T_r=C$ for all $r>0$, and we complete the proof.
\end{proof}

\begin{lemma}\label{area-minimSE}
Let $C$ be an area-minimizing cone with an isolated singularity, then the support of any $n$-varifold self-expander $T$ with $\p T=0$ in $\R^{n+1}$ must be $C$
if the tangent cone of $T$ at infinity is $\llbracket C\rrbracket$.
\end{lemma}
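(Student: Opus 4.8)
The plan is to feed the area-minimizing hypothesis into the problem through the Hardt–Simon foliation and an avoidance argument for the rescaled flow $\mathcal{T}_t=\sqrt{t}\,\mathrm{spt}T$, reducing everything to the assertion $\mathrm{spt}T\subseteq C$ and then upgrading via a constancy argument.

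First I would record the asymptotics of $T$. Since the tangent cone at infinity is the multiplicity one embedded cone $\llbracket C\rrbracket$, Theorem \ref{weaksmooth} shows $\mathrm{spt}T$ is smooth of multiplicity one outside a large ball and $\f1r\mathrm{spt}T\to C$ smoothly, so outside a compact set $\mathrm{spt}T$ is a normal graph over $C\setminus\{0\}$ of a function $f$ with $|f(y)|+|y|\,|\na_C f(y)|\le c|y|^{-1}$ by Lemma \ref{morepresice01order}. Because $C$ is minimal, $H_C\equiv0$, so Lemma \ref{5.4Asp} gives $L_Cf=Q_C$ with $|Q_C|\le c|x|^{-3}(|x||f|+|\na_Cf|)$. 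Running the barrier/maximum-principle argument of Lemmas \ref{lowgse}–\ref{upgse} on $f\mp\Phi$ with $\Phi=A\,r^{-n-1}e^{-r^2/4}\varphi_1$ (the sign of $f$ needed at an interior extremum of $f-\Phi$ is forced there, so global one-sidedness of $f$ is \emph{not} required, and $H_C\equiv0$ makes both inequalities available), I would upgrade this to the sharp Gaussian decay
\begin{equation*}
|f(y)|\le c\,|y|^{-n-1}e^{-|y|^2/4}.
\end{equation*}
Since $C$ is scale invariant, the graph function of $\mathcal{T}_t$ over $C$ is $f_t(y)=\sqrt{t}\,f(y/\sqrt{t})$, decaying like $e^{-|y|^2/(4t)}$ as $|y|\to\infty$; moreover $\mathcal{T}_t\to C$ on compacta as $t\to0^+$, and $\mathcal{T}$ is a Brakke flow.

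Next I would invoke the minimizing hypothesis. By Hardt–Simon \cite{HaS}, each of the two components $\Om^{\pm}$ of $\R^{n+1}\setminus C$ carries a smooth complete embedded minimizing hypersurface, whose dilations, together with $C$, foliate $\R^{n+1}$; denote the leaves by $\mathcal{L}_s$, $s\in\R$, with $\mathcal{L}_0=C$. Each $\mathcal{L}_s$ with $s\neq0$ is a smooth minimal hypersurface, hence a static mean curvature flow, and it is asymptotic to $C$ at infinity at a purely \emph{polynomial} rate $|y|^{\gamma}$. This is precisely where ``minimizing'' enters: the leaves approach $C$ far more slowly than the self-expander does. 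The heart of the argument is then the avoidance claim $\mathcal{T}_t\cap\mathcal{L}_s=\emptyset$ for all $s\neq0$, $t>0$. Fixing $\mathcal{L}_s$, for $t_0$ small $\mathcal{T}_{t_0}$ lies strictly on the $C$-side of $\mathcal{L}_s$: on compacta this is because $\mathcal{T}_{t_0}\to C$ while $\mathcal{L}_s$ keeps a fixed positive distance from $C$, and near infinity it is because the Gaussian graph $e^{-|y|^2/(4t_0)}$ of $\mathcal{T}_{t_0}$ lies below the polynomial graph $|y|^{\gamma}$ of $\mathcal{L}_s$. Since $\mathcal{L}_s$ is a static smooth minimal barrier and $\mathcal{T}_t$ stays Gaussian-close to $C$ near infinity for all $t\in[t_0,1]$ (so no contact can develop from infinity), the avoidance principle gives $\mathcal{T}_t\cap\mathcal{L}_s=\emptyset$ for $t\ge t_0$, in particular $\mathrm{spt}T=\mathcal{T}_1$ is disjoint from $\mathcal{L}_s$. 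As the leaves $\{\mathcal{L}_s\}_{s\neq0}$ foliate $\R^{n+1}\setminus C$, this forces $\mathrm{spt}T\subseteq C$. Finally $T$ is a nonzero integer multiplicity varifold self-expander with $\p T=0$ supported in the smooth connected minimal hypersurface $C\setminus\{0\}$; by the constancy theorem its multiplicity is constant, and equals $1$ since the tangent cone at infinity is $\llbracket C\rrbracket$ with multiplicity one, whence $\mathrm{spt}T=C$.

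I expect the main obstacle to be the noncompactness in the avoidance step: both $\mathcal{T}_t$ and $\mathcal{L}_s$ are asymptotic to $C$, so the gap between them shrinks to zero at infinity and a naive avoidance principle does not apply. The resolution is the decay dichotomy from the first step — the self-expander approaches $C$ at Gaussian rate while the minimizing leaves approach only polynomially — which keeps $\mathcal{T}_t$ strictly inside every leaf near infinity for all $t\in[t_0,1]$ and rules out contact coming in from infinity; on the complementary compact region one uses the ordinary avoidance/maximum principle against the static barrier and glues. A secondary technical point, essential for the argument to even start, is obtaining the two-sided Gaussian bound on $f$ without assuming $f$ one-signed, handled by applying the maximum principle to $f-\Phi$ and $f+\Phi$ separately and using $H_C\equiv0$.
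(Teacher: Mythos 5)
Your proposal is correct and follows essentially the same route as the paper: the paper's proof also slides the dilates $t\,\mathrm{spt}T$ (the same one-parameter family as your $\mathcal{T}_t$) against the Hardt--Simon leaves $S^\pm$, using exactly the Gaussian-versus-polynomial decay dichotomy (Lemma \ref{upgse} together with the Hardt--Simon asymptotic expansion, cf. \eqref{6.666fpm}) to rule out contact coming in from infinity, and concludes $\mathrm{spt}T\subseteq C$. The only difference is packaging: where you invoke a parabolic avoidance principle for a possibly singular Brakke flow against a noncompact static barrier --- a step that needs justification as stated --- the paper instead applies the Solomon--White strong maximum principle (Lemma \ref{mpbySolomon-W}) at the critical dilation parameter $t_0$, exploiting that each fixed dilate is a \emph{stationary} varifold in a suitable weighted metric, which is precisely the elliptic substitute your first-contact analysis would require; your constancy-theorem endgame is a clean version of the paper's final contradiction.
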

\begin{proof}
Let $\Om_+$ and $\Om_-$ be two components divided by $C$. Let $S^\pm$ be smooth area-minimizing hypersurfaces in $\Om_\pm$,
which are connected smooth embedded hypersurfaces without boundary (The existence of $S^\pm$ are known in \cite{HaS}).
Let $T$ be a connected $n$-varifold self-expander with $\p T=0$ in $\R^{n+1}$ which converges to $\llbracket C\rrbracket$ in the varifold sense at infinity.
Then $\mathrm{spt}T$ is a graph over $C$ outside a compact set with the graphic function $f$.
If $\mathrm{spt}T\neq C$, then asymptotic behavior of $S^\pm$ and Lemma \ref{upgse} imply that there exists a constant $\ep>0$ such that $\mathrm{spt}(t T)\cap S^\pm=\emptyset$
for $t\in(0,\ep]$.

Denote $\g_{\pm}=\f{n-2}2\pm\sqrt{\f{(n-2)^2}4+\la_1}$, where $\la_1$ is the first (nonzero) eigenvalue of Jacobi operator of $C\cap\S^n$ in $\S^n$.
By \cite{HaS}, up to a compact set, $S^\pm$ is a graph over $C$ with the graphic functions $f_{\pm}$ satisfying
\begin{equation*}
\left\{\begin{split}
\mathrm{either}\ \ &f_{\pm}=(c_1^\pm+c_2^\pm\log r)r^{-\g_-}\varphi_1+O(r^{-\g_--\a})\qquad \mathrm{as}\ \  r\rightarrow\infty\\
\mathrm{or}\ \ \quad &f_{\pm}=c_1^\pm r^{-\g_+}\varphi_1+O(r^{-\g_+-\a})\qquad \mathrm{as}\ \  r\rightarrow\infty\\
\end{split}\right.
\end{equation*}
($\a>0$), where $(c_1^++c_2^+\log r)>0>(c_1^-+c_2^-\log r)$, and $c_2^\pm=0$ unless $\g_+=\g_-=\f{n-2}2$, and where $\pm c_1^\pm>0$ in the second identity.

Let $t_0=\sup\{t|\ \mathrm{spt}(sT)\cap(S^+\cup S^-)=\emptyset,\ s\in(0,t)\}$.
By Lemma \ref{mpbySolomon-W} with the space $N=\left(\R^{n+1},e^{\f{t_0^2|X|^2}{2n}}\sum_{i=1}^{n+1}dx_i^2\right)$, $\mathrm{spt}(t_0T)\cap(S^+\cup S^-)=\emptyset$.
Combining Lemma \ref{upgse}, there is a constant $r_1>0$ such that
\begin{equation}\aligned\label{6.666fpm}
\f12f^+(r)-sf\left(\f r{s}\right)\ge0\ge\f12f^-(r)-sf\left(\f r{s}\right)
\endaligned
\end{equation}
for all $t_0\le s\le 2t_0$ and $r\ge r_1$. In particular, $$\min\{|x-y|\big|\, x\in\mathrm{spt}(t_0T)\cap B_{r_1},\ y\in(S^+\cup S^-)\cap B_{r_1}\}>0.$$
With the help of \eqref{6.666fpm}, there is a constant $t_1\in(0,t_0)$ such that $\mathrm{spt}(tT)\cap S^\pm=\emptyset$ for $t\in(t_0,t_0+t_1)$.
This violates the choice of $t_0$.
Hence, we have $\mathrm{spt}(t T)\cap (S^+\cup S^-)=\emptyset$ for all $t\in(0,\infty)$. This is a contradiction clearly. We complete the proof.
\end{proof}

\subsection{Existence of \textbf{E}-minimizing self-expanding currents in mean convex cones}

\begin{lemma}\label{smsemc}
Let $C$ be a $C^2$-regular, mean convex cone with vertex at the origin in $\R^{n+1}$ pointing into a domain $\Om$ with $\p\Om=C$. If $T$ is a varifold self-expander with $\p T=0$ in $\overline{\Om}$, then either $\mathrm{spt}T\cap C=\emptyset$, or $\mathrm{spt}T=C$ and $C$ is minimal.
\end{lemma}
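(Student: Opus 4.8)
The plan is to transplant the problem into the conformally changed ambient space $N=(\R^{n+1},e^{|X|^2/(2n)}\sum_i dx_i^2)$, in which, as recalled above, $T$ is a stationary (minimal) varifold, and then to run a strong maximum principle against the barrier $C$. The crucial point specific to this lemma is that $C$ is a \emph{cone}: for $X\in C\setminus\{0\}$ the position vector $X$ is tangent to $C$, so $\langle X,\nu_C\rangle=0$. Writing $\bar g=e^{2\phi}g_0$ with $\phi=|X|^2/(4n)$ and $g_0$ Euclidean, the conformal transformation law for the scalar mean curvature gives, for $C\setminus\{0\}$ in $N$ with respect to $\nu_C$ pointing into $\Om$,
\[
H_C^{N}=e^{-\phi}\Big(H_C+\tfrac12\langle X,\nu_C\rangle\Big)=e^{-\phi}H_C\ge0 .
\]
Thus $C\setminus\{0\}$ is a smooth mean convex boundary in $N$ (the ambient of Lemma \ref{mpbySolomon-W} with $\g=\tfrac1{2n}\ge0$), while $T$ is stationary in $\overline{\Om}$ with $\p(\mathrm{spt}T)=\emptyset$.

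First I would dispose of the generic case, where $\mathrm{spt}T$ meets $C\setminus\{0\}$. At any such touching point $x_0\ne0$ the Solomon--White maximum principle (Lemma \ref{mpbySolomon-W}), applied in $N$, yields $\mathrm{spt}T=C$ in a neighbourhood of $x_0$; since $\mathrm{spt}T$ is stationary there, $H_C^{N}=0$ and hence $H_C=0$ near $x_0$. Because the coincidence is local and $\mathrm{spt}T$ is closed, the set $\mathrm{spt}T\cap(C\setminus\{0\})$ is both open and closed in $C\setminus\{0\}$; by connectedness it is all of $C\setminus\{0\}$, so $C$ is a minimal cone and $C\setminus\{0\}\subseteq\mathrm{spt}T$. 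Now $C$ is a stationary cone with $\mathrm{Sing}\,C=\{0\}$, so $\mathcal H^{n-1}(\mathrm{Sing}\,C)=0$, and Lemma \ref{msem} applies to $T$: since $\mathrm{spt}T\cap C\ne\emptyset$ it forces $\mathrm{spt}T=C$, which is minimal, as claimed.

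The remaining, and genuinely delicate, case is when $\mathrm{spt}T$ meets $C$ only at the vertex, i.e.\ $\mathrm{spt}T\cap C=\{0\}$ with $0\in\mathrm{spt}T$; here Lemma \ref{mpbySolomon-W} is unavailable because $C$ is singular at $0$. My plan is to pass to a tangent cone $T_0$ of the stationary varifold $T$ at the origin. Since $\nabla\phi(0)=0$, the metric $\bar g$ agrees with the Euclidean metric to second order at $0$, so $T_0$ is a Euclidean stationary (minimal) cone, and $\mathrm{spt}T_0\subseteq\overline{\Om}$ because $\overline{\Om}$ is itself a closed cone. If $\mathrm{spt}T_0$ meets $C\setminus\{0\}$, the Euclidean version of the argument in the previous paragraph again shows $C$ is minimal, and Lemma \ref{msem} gives $\mathrm{spt}T=C$; the only scenario left to exclude is $\mathrm{spt}T_0\setminus\{0\}\subseteq\Om$, a nontrivial minimal cone sitting strictly inside the cone $\Om$ and touching $C$ only at its vertex, which I would rule out by the strong maximum principle at the vertex (using that $C$ is mean convex and that height/coordinate functions are controlled by the drift operator governing the expander). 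Isolating this vertex behaviour is the main obstacle; everything else reduces to the conformal computation of the first paragraph together with the two maximum principles already recorded in Lemmas \ref{mpbySolomon-W} and \ref{msem}.
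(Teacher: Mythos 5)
Your first two paragraphs and the reduction at the vertex are essentially the paper's own argument: the conformal identity $H_C^{N}=e^{-\phi}H_C\ge0$ (valid because $\langle X,\nu_C\rangle=0$ on a cone) is the correct justification for invoking Lemma \ref{mpbySolomon-W} in the weighted space; contact at a point of $\mathrm{reg}\,C$ is handled exactly as in the paper, which first uses Lemma \ref{msem} to dispose of the case that $C$ is minimal and then concludes $\mathrm{spt}T\cap\mathrm{reg}\,C=\emptyset$; and the blow-up at the origin producing a Euclidean stationary cone $\mathscr{T}_0\subset\overline{\Om}$ (legitimate, as you note, since the weight agrees with the Euclidean metric to second order at $0$) is precisely the paper's reduction.

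The gap is the step you yourself flag as the main obstacle, and it is genuine: you have no argument excluding a stationary cone $\mathscr{T}_0$ with $\mathrm{spt}\mathscr{T}_0\setminus\{0\}\subset\Om$, and your hint --- a strong maximum principle ``at the vertex'' with drift-controlled height functions --- cannot work as stated, because the only contact point is the vertex, where both $\mathscr{T}_0$ and the barrier $C$ are singular, so neither Lemma \ref{mpbySolomon-W} nor Lemma \ref{msem} applies there, and mean convexity of $C$ yields no usable differential inequality at a conical singularity. The paper closes this case by a rotation trick exploiting that $\mathscr{T}_0$ and $\overline{\Om}$ are cones with the same vertex: acting on the links in $\S^n$ by the orthogonal group, the set of rotations $\a$ with $\mathscr{T}_0\subset\overline{\Om}_\a$ is closed, contains the identity, and is proper (since $\Om\cap\S^n$ is a proper subset of $\S^n$); following a path of rotations out of this set and taking the last admissible rotation $\a_0$, compactness of the links forces $\mathscr{T}_0\subset\overline{\Om}_{\a_0}$ with $\mathscr{T}_0\cap\p\Om_{\a_0}\neq\emptyset$ at some point $x\neq0$. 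There the rotated boundary is smooth and mean convex and $\mathscr{T}_0$ is stationary, so Lemma \ref{mpbySolomon-W} with $N=\R^{n+1}$ forces local coincidence, and your own open-and-closed propagation then makes a whole component of the rotated copy of $C$ minimal and contained in $\mathrm{spt}\mathscr{T}_0$, contradicting the standing assumption that $C$ is not minimal. Note this is not a routine barrier computation you could patch in later: without the rotation one would have to rule out directly the configuration of a nontrivial minimal cone whose link lies inside the open region $\Om\cap\S^n$, a nontrivial geometric fact (a relative of the non-existence of closed minimal hypersurfaces of $\S^n$ inside a hemisphere), and the whole point of the rotation is to convert the singular vertex contact into a smooth tangential contact where the Solomon--White principle applies.
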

\begin{proof}
Obviously, $\mathrm{spt}T=C$ means that $C$ is minimal. In view of Lemma \ref{msem}, we suppose that $\mathrm{spt}T\neq C$ and $C$ is not a minimal cone, then obviously $\mathrm{spt}T\cap \mathrm{reg}C=\emptyset$ by Lemma \ref{mpbySolomon-W} with $N=\left(\R^{n+1},e^{\f{|X|^2}{2n}}\sum_{i=1}^{n+1}dx_i^2\right)$. So for showing $\mathrm{spt}T\cap C=\emptyset$, we only need prove $0\notin \mathrm{spt}T$.

Assume $0\in \mathrm{spt}T$, and let us show that this leads to a contradiction.
Let $\mathscr{T}_0$ be a tangent cone of $\mathrm{spt}T$ at $0$, then $\mathrm{spt}T\subset\overline{\Om}$ implies that $\mathscr{T}_0$ is a stationary cone in $\overline{\Om}$. According to Lemma \ref{mpbySolomon-W} with $N=\R^{n+1}$, $\mathscr{T}_0\cap C=\emptyset$,
and then $\mathscr{T}_0\subset\Om$.
Let $\Om_\a$ be the shape after a rotation of $\Om$ by the action $\a$ in orthogonal group $O(n):\S^n\rightarrow\S^n$. Obviously, there is an $\a_0\in O(n)$ such that $\mathscr{T}_0\subset\overline{\Om}_{\a_0}$ and $\mathscr{T}_0\cap\p\Om_{\a_0}\neq\emptyset$. However, this violates Lemma \ref{mpbySolomon-W} with $N=\R^{n+1}$. Hence $0\notin\mathrm{spt}T$, and we complete the proof.
\end{proof}

\begin{theorem}\label{WESEmcC}
Let $C$ be a $C^2$-regular, mean convex but not area-minimizing cone pointing into $\Om$ in $\R^{n+1}$ with $\p\Om=C$, then there is a multiplicity one \textbf{E}-minimizing self-expanding current $T$ in $\Om$ with $\p T=0$ such that the tangent cone of $T$ at infinity is $\llbracket C\rrbracket$ .
\end{theorem}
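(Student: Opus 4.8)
The plan is to imitate the construction of Theorem \ref{WeakexistSE}, but to perform the weighted minimization \emph{one-sidedly} inside $\overline\Omega$, and then to use the mean convexity of $C$ together with the failure of area-minimality to peel the resulting current off of $C$. For each $r>0$ I would minimize $\int_{B_r}|D\chi_U|e^{|X|^2/4}\,dX$ over Lebesgue sets $U\subseteq\Omega$ with $U\,\triangle\,\Omega\Subset B_r$, take a minimizer $E_r$, and set $T_r=\partial\llbracket E_r\rrbracket\llcorner B_r$. Then $T_r$ is a multiplicity one current that is $\mathbf{E}$-minimizing among one-sided competitors in $\overline\Omega\cap B_r$, with $\mathrm{spt}T_r\subseteq\overline\Omega$ and boundary lying in $C\cap\partial B_r$. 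Since $C$ is not area-minimizing in $\overline\Omega$, the competitor $U=\Omega$ is not optimal, so $\mathrm{spt}T_r\neq C\cap B_r$; by the Solomon--White maximum principle (Lemma \ref{mpbySolomon-W}) applied to the mean-convex barrier $C=\partial\Omega$, the contact set of $\mathrm{spt}T_r$ with the regular part of $C$ inside $B_r$ is then empty, so $T_r$ is a genuine self-expander in the interior.

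Next I would install the uniform estimates. The $\Omega$-side barrier cones $\mathcal{S}_{y_+}$ from the proof of Theorem \ref{WeakexistSE} (available because $\Sigma=C\cap\S^n$ has bounded principal curvatures) confine $\partial E_r$ to a shrinking neighborhood of $C$, yielding the one-sided analogue of the Hausdorff-distance bound \eqref{dpErpBsC}, while the monotonicity \eqref{monSEr-nBr} together with \eqref{6.62nHnMr} gives the density bound \eqref{ErBrrn}. Passing to a subsequential limit $r_i\to\infty$ by the compactness theorem for currents produces an $\mathbf{E}$-minimizing (in $\overline\Omega$) self-expanding current $T$ with $\partial T=0$ and $\mathrm{spt}T\subseteq\overline\Omega$. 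Exactly as in Theorem \ref{WeakexistSE}, matching the upper density bound \eqref{ErBrrn}--\eqref{kE*} against the lower bound coming from the Hausdorff-distance estimate forces the multiplicity of $T$ to be one and its tangent cone at infinity to be $\llbracket C\rrbracket$.

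Finally, $T$ is a varifold self-expander with $\partial T=0$ in $\overline\Omega$ and $C$ is mean convex pointing into $\Omega$, so Lemma \ref{smsemc} gives the dichotomy: either $\mathrm{spt}T\cap C=\emptyset$, in which case $\mathrm{spt}T\subset\Omega$ and we are done, or $\mathrm{spt}T=C$ with $C$ minimal. To exclude the latter, observe that $\mathrm{spt}T=C$ would exhibit $C$ as an $\mathbf{E}$-minimizing self-expanding hypersurface in $\overline\Omega$, whence Lemma \ref{ammse} would force $C$ to be area-minimizing in $\overline\Omega$, contradicting the hypothesis. (When $C$ is not minimal the case $\mathrm{spt}T=C$ is already impossible, since a cone that is a self-expander has $X^N=0$ and hence $H=0$.) This yields the desired $T$ in $\Omega$ with $\partial T=0$ and tangent cone $\llbracket C\rrbracket$ at infinity.

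The main obstacle I anticipate is the one-sided construction itself: verifying that the weighted minimizer can be taken inside $\overline\Omega$ with the mean-convex cone $C$ serving as an admissible barrier, and that the confinement and density estimates of Theorem \ref{WeakexistSE} persist in the one-sided setting so that the limit keeps tangent cone $\llbracket C\rrbracket$ with multiplicity one rather than losing mass or fattening. The mean convexity of $C$ plays a double role here—it both underlies the barrier that keeps the entire construction within $\overline\Omega$ and supplies the strong maximum principle of Lemma \ref{smsemc}—while the failure of area-minimality in $\overline\Omega$ is precisely what prevents the limit $T$ from collapsing back onto $C$.
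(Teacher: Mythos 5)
Your proposal is correct and follows essentially the same route as the paper's proof: one-sided weighted minimization of $\int_{B_r}|D\chi_U|e^{|X|^2/4}$ among sets confined to $\overline\Om$, the barrier cones and monotonicity/density estimates carried over from Theorem \ref{WeakexistSE} to get the Hausdorff-distance confinement, compactness of currents to extract a multiplicity one limit $T$ with $\p T=0$ and tangent cone $\llbracket C\rrbracket$ at infinity, and finally the dichotomy of Lemma \ref{smsemc} combined with Lemma \ref{ammse} to rule out $\mathrm{spt}T=C$ using the non-area-minimizing hypothesis. One minor caveat: your intermediate claim that $\mathrm{spt}T_r\neq C\cap B_r$ for each fixed $r$ does not follow directly from the failure of area-minimality (the weight $e^{|X|^2/4}$ is nonconstant, so a competitor with smaller area need not have smaller weighted area at a single finite scale; the scaling argument of Lemma \ref{ammse} needs $\mathbf{E}$-minimality at all scales), but this claim is inessential, since your concluding application of Lemmas \ref{smsemc} and \ref{ammse} to the limit $T$ disposes of the collapse case exactly as the paper does.
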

\begin{proof}
Note that $C\cap\S^n$ is $C^2$-continuous by the assumption of mean convex $C$. Then there is a uniform positive constant $0<\k<\f12$ depending only on $C$ so that for any $y\in C\cap\S^n$, we have a cone $C_{\k}$ such that $C_{\k}\cap C\cap\S^n=y$ and $C_{\k}$ is $\left\{\left(x,\f{|x|}\k\right)\big| x\in\R^n\right\}$ up to a rotation.

By compactness theorem for currents (see \cite{LY}\cite{S} for instance),
there exists an open set $E_r\subset\Om$ for any $r>0$ such that $\p  E_r\setminus B_r=C\setminus B_r$ and
\begin{equation}\aligned
\int_{B_r}|D\chi_{_{E_r}}|e^{\f{|X|^2}4}dX=\inf\left\{\int_{B_r}|D\chi_{_K}|e^{\f{|X|^2}4}dX\Big|\ \p K\setminus B_r=C\setminus B_r,\ K\subset\overline{\Om}\right\}.
\endaligned
\end{equation}
Hence $\p \llbracket E_r\rrbracket\llcorner B_r$ is an $n$-dimensional multiplicity one \textbf{E}-minimizing self-expanding current in $\overline{\Om}$. By Lemma \ref{smsemc}, either $\p E_r\cap B_r\cap C=\emptyset$ or $\p E_r=C$.
Combining $E_r\subset \overline{\Om}$ and the proof of Theorem \ref{WeakexistSE}, there is a constant $c_\k>0$ such that
$$d_H(\p E_r\cap\p B_s,\ C\cap\p B_s)\le \f{2(n+2)}{\k^2s}$$
for every $s\in[c_\k,r]$. Following the proof of Theorem \ref{WeakexistSE}, there is a sequence $r_i\rightarrow\infty$ such that $\p \llbracket E_{r_i}\rrbracket\llcorner B_{r_i}$ converges to an $n$-dimensional multiplicity one \textbf{E}-minimizing self-expanding current $T$ in $\overline{\Om}$ with $\p T=0$ and $\lim_{r\rightarrow\infty}\f1rT=\llbracket C\rrbracket$. By Lemma \ref{ammse} and Lemma \ref{smsemc}, $\mathrm{spt}T\subset\Om$ since $C$ is not area-minimizing.
\end{proof}

\subsection{Existence and uniqueness of smooth $\mathbf{E}$-minimizing self-expanding hypersurfaces}
For a domain $U_0$, and a set $E_a\subset U_0$, we say a set $E_b$ \emph{on one side of $E_a$ in $U_0$} if $E_b$ is contained in the closure of one of the connected components of $U_0\setminus E_a$ in the current paper. When $U_0$ is a Euclidean space, we sometimes omit $U_0$.
\begin{lemma}\label{sesem}
Let $E_1$ and $E_2$ be two open sets satisfying $0\notin\overline{E_1\cap U}$ and $E_2\cap U\subset E_1\cap U$ for some open set $U\subset\R^{n+1}$. Let $T_1=\p\llbracket E_1\rrbracket\llcorner U$, $T_2=\p\llbracket E_2\rrbracket\llcorner U$, and $\sqrt{\g}|T_1|$ and $|T_2|$ be both $n$-dimensional varifold self-expanders in $U$ for some constant $\g>1$ with $\mathrm{spt}(t T_1)\cap\mathrm{spt}T_2\cap U=\emptyset$ for each $t\in(0,1)$. If either $\sqrt{\g}T_1$ or $T_2$ is an $\mathbf{E}$-minimizing self-expanding current in $U$, then either $T_1\llcorner U=T_2\llcorner U$ or $\mathrm{spt}T_1\cap\mathrm{spt}T_2\cap U=\emptyset$.
\end{lemma}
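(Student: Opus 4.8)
The plan is to prove the dichotomy directly: if $\mathrm{spt}T_1\cap\mathrm{spt}T_2\cap U=\emptyset$ there is nothing to prove, so I assume the two supports meet at some $p\in U$ and show $T_1\cap U=T_2\cap U$. First I record that $0\in E_2\cap U\subset E_1\cap U$ is an interior point of both sets, hence $0\notin\mathrm{spt}T_1\cup\mathrm{spt}T_2$ and in particular $p\neq0$, so $p$ lies where the weighted metrics below are smooth. The hypothesis $\mathrm{spt}(tT_1)\cap\mathrm{spt}T_2\cap U=\emptyset$ for all $t\in(0,1)$ means that the dilations $tT_1$ rise up to $T_1$ from one side of $T_2$ without ever crossing it; consequently, near $p$ the supports of $T_1$ and $T_2$ are tangent and $T_2$ lies weakly on one side of $T_1$. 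This one-sided contact is the geometric input for a strong maximum principle at $p$.

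Next I would recast both currents as minimal hypersurfaces in conformally Euclidean metrics. By the correspondence in the preliminaries, $|T_2|$ is stationary in $N_1\triangleq\bigl(\R^{n+1},e^{\f{|X|^2}{2n}}\sum_{i=1}^{n+1}dx_i^2\bigr)$. Since $\sqrt{\g}|T_1|$ is a self-expander, writing $|T_1|=\f1{\sqrt\g}\bigl(\sqrt\g\,|T_1|\bigr)$ and pulling back the weight under the dilation $X\mapsto\sqrt\g X$ shows that $|T_1|$ is stationary in $N_\g\triangleq\bigl(\R^{n+1},e^{\f{\g|X|^2}{2n}}\sum_{i=1}^{n+1}dx_i^2\bigr)$; equivalently $T_1$ obeys the scaling self-expander equation \eqref{Sp} with $H_{T_1}=\f\g2\lan X,\nu\ran$. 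The key computation is the conformal change of mean curvature between $N_1$ and $N_\g$: with $\phi=\f{c}2|X|^2$ one has $H^{N_c}=e^{-\phi}\bigl(H^{\mathrm{eucl}}-n\lan\nu,\na\phi\ran\bigr)$, so the surface that is minimal for one weight is mean convex for the other, with scalar mean curvature proportional to $(1-\g)\lan X,\nu\ran$. Because $\g<1$ this sign is definite, and the one-sided contact from the first step pins down that this mean curvature points into the region containing the stationary surface --- exactly the hypothesis of Lemma~\ref{mpbySolomon-W}.

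I then apply the strong maximum principle. Taking the current assumed to be $\mathbf E$-minimizing as the stationary varifold in its own weight, its singular set has Hausdorff dimension at most $n-7$, so near $p$ it is smooth off a negligible set and the hypotheses of the Solomon--White maximum principle (Lemma~\ref{mpbySolomon-W}), or of the Wickramasekera strong maximum principle cited just before Lemma~\ref{msem}, hold with the other surface serving as the mean convex barrier. This forces $\mathrm{spt}T_1=\mathrm{spt}T_2$ on a neighbourhood of $p$. To finish I would propagate the coincidence: the set on which $T_1$ and $T_2$ agree is relatively open and closed in their common regular locus in $U$, by unique continuation for the uniformly elliptic graphical equation \eqref{Graphu} together with removability of the codimension $\ge7$ singular set, whence $T_1\cap U=T_2\cap U$.

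The hard part will be the maximum principle step at a possibly singular contact point when only one of the two currents is assumed $\mathbf E$-minimizing: I must guarantee that the barrier is genuinely smooth and mean convex with the correct orientation near $p$, which hinges on matching the definite sign $(1-\g)\lan X,\nu\ran$ against the side singled out by the dilation hypothesis $\mathrm{spt}(tT_1)\cap\mathrm{spt}T_2\cap U=\emptyset$. A secondary difficulty is upgrading the local coincidence to all of a general open $U$, for which the low-dimensional singular set must be shown removable so that unique continuation runs along the regular locus.
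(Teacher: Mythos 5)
Your treatment of a \emph{regular} contact point is essentially the paper's: the dilation hypothesis pins down the sign of $\lan X,\nu\ran$ at the touching point, the two expander equations ($H=\f{\g}{2}\lan X,\nu\ran$ versus $H=\f12\lan X,\nu\ran$, $\g<1$) make one surface a mean convex barrier for the other in the weighted metric, and Lemma \ref{mpbySolomon-W} gives coincidence. The genuine gap is the case $p\in\mathrm{Sing}T_1\cap\mathrm{Sing}T_2$, which you flag as ``the hard part'' but never close, and neither of your cited tools applies there. Lemma \ref{mpbySolomon-W} requires the barrier to be a \emph{smooth} mean convex boundary in a neighborhood of the contact point, which fails when both supports are singular at $p$. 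Lemma \ref{msem} (Wickramasekera) is stated for a stationary \emph{cone} with $\mathcal{H}^{n-1}$-null singular set enclosing a self-expander, and the general singular strong maximum principles (Ilmanen, Wickramasekera) compare two hypersurfaces stationary for the \emph{same} functional, whereas your $T_1$ and $T_2$ are stationary in different weighted metrics ($N_\g$ versus $N_1$). Moreover, your ``smooth off a negligible set'' assertion covers only the current assumed $\mathbf{E}$-minimizing; the other is merely a varifold self-expander carrying no singular-set bound at all, so you cannot even locate a smooth point of the barrier near $p$ at which to run Solomon--White.

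The paper closes precisely this case with a blow-up argument your proposal lacks. At $x_0\in\mathrm{Sing}T_1\cap\mathrm{Sing}T_2$ one first shows that along a subsequence $\la_{j'}\rightarrow0$ the rescalings $\la_{j'}^{-1}(|T_1|-x_0)$ and $\la_{j'}^{-1}(|T_2|-x_0)$ converge to one and the \emph{same} cone $C_T$ (two distinct touching tangent cones would violate Lemma \ref{msem}; cf.\ \cite{S2}); the $\mathbf{E}$-minimizing hypothesis makes $C_T$ a multiplicity one area-minimizing cone, non-flat by Allard's theorem since $x_0$ is singular. The key device is then to feed the one-parameter dilation hypothesis into the blow-up: with $t_{j'}=1-\la_{j'}s$, the identity $\la_{j'}^{-1}(t_{j'}\p E_1-x_0)=(\la_{j'}^{-1}-s)(\p E_1-x_0)-sx_0$ shows that $t_{j'}T_1$ blows up to the translate $C_{T,s}=C_T-sx_0$ (here $x_0\neq0$ is essential --- you record $p\neq0$ but never use it). Since $t_{j'}(\mathrm{spt}T_1)$ lies on one side of $\mathrm{spt}T_2$, $C_{T,s}$ lies on one side of $C_T$, so Lemma \ref{msem} forces $\mathrm{spt}C_T\cap\mathrm{spt}C_{T,s}=\emptyset$; finally, the hyperplane through the point of $C_{T,s}$ nearest the origin leaves the non-flat minimizing cone $C_{T,s}$ on one side, contradicting Lemma \ref{msem} once more. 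Without this mechanism, or a substitute for it, your argument only excludes regular contact and the stated dichotomy is not established. (Your final unique-continuation propagation is also superfluous in the paper's scheme: at a regular contact point Lemma \ref{mpbySolomon-W} already yields $T_1=T_2$ in $U$, the mismatch of the two expander equations with $\g\neq1$ forcing $\lan X,\nu\ran=0$ along the coincidence set.)
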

\begin{proof}
We suppose that $\mathrm{spt}T_1\cap\mathrm{spt}T_2\cap U\neq\emptyset$ and there is a point $x_0\in\mathrm{spt}T_1\cap\mathrm{spt}T_2\cap U$. If $x_0\in\mathrm{reg}T_1$, then spt$T_1$ is smooth in a neighborhood of $x_0$ by regularity of minimal hypersurfaces.
Denote $\Si_t=\sqrt{t-1+1/\g}\,\mathrm{spt}(\sqrt{\g}T_1)$ for $t>1-\f1\g$.
By the definition of $T_1,T_2$, both $\Si_t$ and $\sqrt{t}\,\mathrm{spt}T_2$ are mean curvature flows for the time $t\in(1-\f1\g,1+\f1\g)$.
Noting that
$$\f{\sqrt{\g}}{\sqrt{t}}\sqrt{t-1+1/\g}=\sqrt{\g}\sqrt{1-(1-1/\g)/t}<1\quad \mathrm{for\ every}\ t\in(1-1/\g,1).$$
From $\mathrm{spt}(t T_1)\cap\mathrm{spt}T_2\cap U=\emptyset$ for each $t\in(0,1)$ and $x_0\in\mathrm{reg}T_1\cap\mathrm{spt}T_2\cap U$, we get $\Si_t\cap\sqrt{t}\,\mathrm{spt}T_2\cap U=\emptyset$ for any $t\in(1-1/\g,1)$ and $x_0\in \mathrm{reg}\Si_1\cap\mathrm{spt}T_2\cap U$. For every $t\in(1-\ep,1]$ with the suitably small $\ep>0$, both $\Si_t\cap B_\ep(x_0)$ and $(\sqrt{t}\,\mathrm{spt}T_2)\cap B_\ep(x_0)$ can be written as smooth graphs over small neighborhoods of $x_0$ in the tangent plane $T_{x_0}\Si_1$.
Then $x_0\in \Si_1\cap\mathrm{spt}T_2\cap U$ violates the strong maximum principle of parabolic equations.


Therefore, we only need to consider the case: $x_0\in\mathrm{Sing}T_1\cap\mathrm{Sing}T_2\cap U$. Assume $\mathrm{reg}T_1\cap\mathrm{reg}T_2=\emptyset$ or else we complete the proof. Obviously, every tangent cone of $T_1$ or $T_2$ at $x_0$ is an $n$-dimensional area-minimizing cone in $\R^{n+1}$. Hence for any sequence $\la_j\rightarrow0$ there is a subsequence $\la_{j'}$ of $\la_j$ such that $\la_{j'}^{-1}(|T_1|-x_0)$ and $\la_{j'}^{-1}(|T_2|-x_0)$ both converge to a cone $C_T$ in $\R^{n+1}$ in the varifold sense, or else this contradicts Lemma \ref{msem} (see also \cite{S2}). Since either $\sqrt{\g}\p\llbracket E_1\rrbracket$ or $\p\llbracket E_2\rrbracket$ is an $\mathbf{E}$-minimizing current in $U$, then $C_T$ is a multiplicity one area-minimizing cone (see Theorem 37.2 in \cite{S} for example). In particular, $C_T$ is not a hyperplane by Allard's regularity theorem. For any constant $s>0$, let $t_{j'}=1-\la_{j'}s$ for the sufficiently large $j'$. Note $x_0\neq0$ from $0\notin E_2\cap U\subset E_1\cap U$. By
$$\la_{j'}^{-1}(t_{j'}\p E_1-x_0)=\la_{j'}^{-1}\left(t_{j'}(\p E_1-x_0)-(1-t_{j'})x_0\right)=\left(\la_{j'}^{-1}-s\right)(\p E_1-x_0)-sx_0,$$
we conclude that $\la_{j'}^{-1}\llbracket t_{j'}(U\cap\p E_1)-x_0\rrbracket$ converges to $C_{T,s}\triangleq C_T-sx_0$ with the vertex at $-sx_0$ as $j'\rightarrow\infty$ in the varifold sense. Since $t_{j'}(\mathrm{spt}T_1)$ is on one side of $\mathrm{spt}T_2$ in $U$ by the assumption, then $\la_{j'}^{-1}(t_{j'}\mathrm{spt}T_1-x_0)$ is on one side of $\la_{j'}^{-1}(\mathrm{spt}T_2-x_0)$. Taking the limit, we conclude that
$\mathrm{spt}C_{T,s}$ is also on one side of $\mathrm{spt}C_T$ for any $s>0$. Recall $C_{T,s}$ and $C_T$ are both minimizing cones with different vertices, then by Lemma \ref{msem}, $\mathrm{spt}C_T\cap \mathrm{spt}C_{T,s}=\emptyset$ for each $s>0$. Hence, $\mathrm{spt}C_T$ can be written as a graph over a hyperplane $P_s$, which is perpendicular to the vector $\overrightarrow{ox_s}$.
However, this is impossible by blowing up argument (at singular sets of spt$C_T$) and the proof of Bernstein theorem. Hence we complete the proof.
\end{proof}

Theorem \ref{WESEmcC} gives an existence for varifold self-expanders with prescribed mean convex cones at infinity. Fortunately, we are able to point out that such varifold self-expanders are smooth actually. Firstly, we focus on regular minimal but not minimizing cones.
\begin{theorem}\label{Minimalfoliation}
Let $C$ be an $n$-dimensional regular minimal but not minimizing cone in $\R^{n+1}$, and $\Om$ be a domain with $\p\Om=C$, then there is a unique smooth complete embedded \textbf{E}-minimizing self-expanding hypersurface $M$ with $\lim_{r\rightarrow\infty}r^{-1}M=C$ in $\Om$. Moreover, $M$ has positive mean curvature everywhere and $\{\sqrt{t}M\}_{t>0}$ is a foliation of $\Om$.
\end{theorem}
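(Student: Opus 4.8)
The plan is to realize $M$ as the support of the current furnished by Theorem \ref{WESEmcC}. Since a minimal cone is mean convex and $C$ is not minimizing, that theorem yields a multiplicity one $\textbf{E}$-minimizing self-expanding current $T$ in $\Om$ with $\p T=0$ and $\lim_{r\to\infty}\frac1r T=\llbracket C\rrbracket$; by Lemma \ref{ammse} and Lemma \ref{smsemc} its support satisfies $\mathrm{spt}T\subset\Om$, so $0\notin\mathrm{spt}T$ and $\mathrm{spt}T\cap C=\emptyset$. Put $M=\mathrm{spt}T$. Theorem \ref{weaksmooth} makes $M$ smooth outside a compact set, where by Corollary \ref{updownC} it is a graph over $C$ with positive graphic function $f$ satisfying $\epsilon_4 r^{-n-1}e^{-r^2/4}\varphi_1(\xi)\le f(\xi r)\le \epsilon_4^{-1} r^{-n-1}e^{-r^2/4}\varphi_1(\xi)$; in the interior, $T$ is minimizing in the weighted metric $(\R^{n+1},e^{|X|^2/(2n)}\sum_i dx_i^2)$, so $\dim\mathrm{Sing}\,M\le n-7$.

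The heart of the proof is to analyze the one-parameter family $\{sM\}_{s>0}$. Near infinity $sM$ is a graph over $C$ with graphic function $sf(\cdot/s)$, and the estimate above shows that for $s<1$ the Gaussian factor $e^{-r^2/(4s^2)}$ forces $sM$ strictly onto the $C$-side of $M$ (and onto the far side for $s>1$). I would then prove $sM\cap M=\emptyset$ for all $s\ne1$ by a continuity argument in $s$: a first interior contact at a regular point is excluded by the Solomon--White maximum principle (Lemma \ref{mpbySolomon-W}) in $(\R^{n+1},e^{|X|^2/(2n)}\sum_i dx_i^2)$, while a contact at a singular point, after blowing up both sheets, would produce two area-minimizing cones with distinct vertices lying on one side of each other, contradicting the sharp strong maximum principle (Lemma \ref{msem})---this is precisely the mechanism of Lemma \ref{sesem}. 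The same blow-up dichotomy rules out any interior singularity of $M$ itself, since a singular point would yield a nonflat minimizing tangent cone trapped on one side of a hyperplane. Thus $M$ is smooth and embedded everywhere. This interlocking of disjointness and regularity through the singular strong maximum principle is the main obstacle.

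Once $\{sM\}$ is disjoint and ordered, the smooth flow $\mathcal M:t\mapsto\sqrt{t}M$ moves monotonically in the normal direction; as the leaves sweep out of $C$ (using $\sqrt tM\to C$ as $t\to0$) into $\Om$, the normal speed $\frac{1}{2\sqrt t}\langle X,\nu\rangle$ keeps a fixed sign, forcing $H=\frac12\langle X,\nu\rangle\ge0$. Since $H$ solves \eqref{meancurvature}, it is a nonnegative supersolution of $\De+\frac12\langle X,\na\cdot\rangle$; as $0\notin M$ the surface is not a cone, so $H\not\equiv0$ and the strong maximum principle gives $H>0$ everywhere. Disjointness together with covering---the ordered union $\bigcup_{t>0}\sqrt tM$ is open and closed in $\Om$---exhibits $\{\sqrt tM\}_{t>0}$ as a foliation of $\Om$. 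For uniqueness, any other smooth complete embedded $\textbf{E}$-minimizing self-expander $M'\subset\Om$ with tangent cone $C$ shares the asymptotics of Corollary \ref{updownC}, hence near infinity is trapped between $sM$ and $s^{-1}M$ for $s>1$; letting $s\downarrow1$ and applying the same strong maximum principles (equivalently Lemma \ref{sesem}) forces $M'$ to be the unique self-expanding leaf $M$.
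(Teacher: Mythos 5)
Your proposal is correct and follows essentially the same route as the paper: existence of the multiplicity one $\textbf{E}$-minimizing current via Theorem \ref{WESEmcC} (with Lemma \ref{ammse} and Lemma \ref{smsemc} forcing $\mathrm{spt}T\subset\Om$), the two-sided decay of Corollary \ref{updownC}, the sup-in-$s$ continuity argument closed off at the critical parameter by Lemma \ref{sesem} to get $sM\cap M=\emptyset$ for $s\neq1$, nonnegativity of $H$ from the ordered family together with \eqref{meancurvature} and the Hopf maximum principle, and uniqueness by trapping any competitor between the leaves $sM$ and $s^{-1}M$ and letting $s\downarrow1$. The one step where you genuinely deviate is interior regularity of $M$: the paper deduces from the foliation that each ray from the origin meets $M$ in a single point, writes $M$ locally as a Lipschitz graph over a hyperplane as in the proof of Theorem 2.1 of \cite{HaS}, and then kills $\mathrm{sing}\,T$ by the dimension bound; you instead rule out a singular point $x_0$ directly by blowing up $M$ together with its dilations $t_{j'}M$, $t_{j'}=1-\la_{j'}s$, obtaining two multiplicity one minimizing cones $C_T$ and $C_T-sx_0$ with distinct vertices lying on one side of each other, and then invoking Lemma \ref{msem} and the nearest-point/hyperplane trap from the proof of Lemma \ref{sesem}. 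Both mechanisms work; only note that the hyperplane trap applies to the translated cone $C_T-sx_0$ (through its point nearest the vertex of $C_T$), not to the tangent cone itself as your compressed phrasing suggests, and that for the starting point of the continuity argument the global disjointness $\sqrt{\de_0}M\cap M=\emptyset$ for small $\de_0$ needs the compact-region Hausdorff convergence $sM\to C$ in addition to the asymptotics at infinity.
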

\textbf{Remark.} For a smooth hypersurface $S$ and a regular cone $C$ in $\R^{n+1}$, we say that $\lim_{r\rightarrow\infty}r^{-1}S=C$ (or the tangent cone of $S$ at infinity is $C$) if $\lim_{r\rightarrow\infty}r^{-1}\llbracket S\rrbracket=\llbracket C\rrbracket$. From \eqref{monSEr-nBr} and the argument in Theorem 4.1 of \cite{CD}, $S$ has Euclidean volume growth and it is proper if $\lim_{r\rightarrow\infty}r^{-1}\llbracket S\rrbracket=\llbracket C\rrbracket$.
\begin{proof}
By Theorem \ref{equamcse} and Theorem \ref{WESEmcC}, there is a multiplicity one \textbf{E}-minimizing self-expanding current $T$ with $\p T=0$ in $\Om$ with tangent cone $C$ at infinity, where $\Om$ is a component of $\R^{n+1}\setminus C$. Let $M=\mathrm{spt}T$.
From the discussion in section 5,
up to a compact set, $tM$ also can be represented as a graph over $C\setminus B_{tr_0}$ with the graphic function $w_t$ defined by
$$w_t(x)=tw\left(\f xt\right)>0$$
for $x\in C\setminus B_{tr_0}$ and $t>0$.

By asymptotic behavior \eqref{asymse}, there is a constant $\de_0>0$ such that $\sqrt{\de_0}\ M\cap M=\emptyset$.
Let $\de_1=\sup\{\de| \sqrt{s}M\cap M=\emptyset,\ s\in(0,\de)\}$. Then $\de_1\le1$ clearly.
By Lemma \ref{sesem}, we conclude that $\sqrt{\de_1}M\cap M=\emptyset$ provided $\de_1<1$. From Corollary \ref{updownC}, there are constants $r_1>r_0$ and $\ep_5>0$ such that
\begin{equation}\aligned
w(x)\ge\ep_5r^{-n-1}e^{-\f{r^2}{4}}>\ep_5^{-1}t\left(\f rt\right)^{-n-1}e^{-\f{r^2}{4t^2}}\ge w_t(x)
\endaligned
\end{equation}
for all $x$ with $|x|=r\ge r_1$ and any $\de_1\le t\le\f{1+\de_1}2$. Hence there is a constant $\de_2\in(\de_1,\f{1+\de_1}2)$ such that $\sqrt{s}M\cap M=\emptyset$ for $s\in(0,\de_2]$. This violates the maximum of $\de_1$. Therefore, $\sqrt{s}M\cap M=\emptyset$ for all $0<s<1$ and $\{\sqrt{s}M\}_{s>0}$ is a foliation of $\Om$. So $M$ is embedded and each ray $\{\la\xi\in\R^{n+1}|\ \la>0\}$, corresponding to $\xi\in\Om\cap\p B_1$, intersects $M$ in a single point $y$. Then for a suitable $\r_y>0$ we can write $M\cap B_{\r_y}(y)$ as a graph of some function $w$ with bounded gradient over a hyperplane $P$ normal to $\xi$ (see also the proof of Theorem 2.1 in \cite{HaS}). Since $M=\mathrm{spt}T$ is an \textbf{E}-minimizing self-expanding hypersurface and $\mathcal{H}^{s}(\mathrm{sing}T)=0$ for $s>\max\{0,n-7\}$, it follows that sing$T\cap B_{\r_y}(y)=\emptyset$. Thus sing$T=\emptyset$ and $M$ is smooth without boundary. From the foliation $\{\sqrt{s}M\}_{s>0}$, $M$ has nonnegative mean curvature pointing into the domain $\Om$. \eqref{meancurvature} and Hopf maximum principle imply that $M$ has positive mean curvature everywhere.

Let $S$ be an $n$-dimensional smooth complete self-expander in $\Om$ with $\lim_{t\rightarrow0}tS=C$. Then there is a constant $\ep>0$ such that $\sqrt{\ep}(\mathrm{spt}S)\cap M=\emptyset$. By the above argument and Lemma \ref{sesem}, we obtain $s(\mathrm{spt}S)\cap M=\emptyset$ for all $0<s<1$. Similarly, $sM\cap \mathrm{spt}S=\emptyset$ for all $0<s<1$. Hence $\mathrm{spt}S=M$ and the uniqueness holds.
\end{proof}

Secondly, we utilize the asymptotic analysis in Lemma \ref{AsmPMC} to get the smooth existence for every prescribed regular cone with positive mean curvature at infinity.
\begin{corollary}\label{posiSE}
Let $C$ be an $n$-dimensional $C^{3,\a}$-regular cone in $\R^{n+1}$ which has positive mean curvature pointing into the domain $\Om$ with $\p\Om=C$, then there is a unique smooth complete embedded \textbf{E}-minimizing self-expanding hypersurface $M$ with $\lim_{r\rightarrow\infty}r^{-1}M=C$ in $\Om$. Moreover, $M$ has positive mean curvature everywhere and $\{\sqrt{t}M\}_{t>0}$ is a foliation of $\Om$.
\end{corollary}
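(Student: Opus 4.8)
The plan is to follow the proof of Theorem \ref{Minimalfoliation} almost verbatim; the single genuine change is that the exponentially decaying asymptotics \eqref{asymse} of Corollary \ref{updownC}, which are available only for minimal cones, must be replaced by the polynomially decaying asymptotics of Lemma \ref{AsmPMC}. For the existence, observe that a cone with positive mean curvature is in particular not minimal, hence not area-minimizing, so Theorem \ref{WESEmcC} applies and furnishes a multiplicity one \textbf{E}-minimizing self-expanding current $T$ in $\Om$ with $\partial T=0$, whose tangent cone at infinity is $\llbracket C\rrbracket$ and, since $C$ is not minimal (Lemma \ref{smsemc}), with $\mathrm{spt}T\subset\Om$. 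Set $M=\mathrm{spt}T$; by the asymptotic analysis of Section 5, outside a compact set $M$ is a graph over $C$ with graphic function $w$.

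The asymptotics now come from Lemma \ref{AsmPMC} rather than Corollary \ref{updownC}: writing $\Si=C\cap\S^n$, which is compact with $H_\Si>0$ so that $H_\Si\ge h_0$ for some $h_0>0$, we have
\begin{equation*}
\frac{H_\Si(\xi)}{r}-\frac{K_C}{r^3}\le w(\xi r)\le\frac{H_\Si(\xi)}{r}+\frac{K_C}{r^3},\qquad \xi r\in C\setminus B_{R_8},
\end{equation*}
so that $w>0$ near infinity. To build the foliation I mimic Theorem \ref{Minimalfoliation}: put $\delta_1=\sup\{\delta:\sqrt{s}M\cap M=\emptyset,\ s\in(0,\delta)\}\le1$, use Lemma \ref{sesem} to get $\sqrt{\delta_1}M\cap M=\emptyset$, and then show $\delta_1=1$. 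Representing the dilate of $M$ by a factor $t<1$ as a graph with function $w_t(x)=t\,w(x/t)$, the decisive comparison at infinity reads
\begin{equation*}
w(\xi r)-w_t(\xi r)\ge\frac{(1-t^2)H_\Si(\xi)}{r}-\frac{(1+t^4)K_C}{r^3}\ge\frac{(1-t^2)h_0}{r}-\frac{2K_C}{r^3}>0
\end{equation*}
for $\delta_1\le t\le\frac{1+\delta_1}{2}<1$ and all large $r$. Thus the leaves stay disjoint slightly beyond $\delta_1$, contradicting maximality unless $\delta_1=1$, and $\{\sqrt{t}M\}_{t>0}$ foliates $\Om$.

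The remaining conclusions follow exactly as in Theorem \ref{Minimalfoliation}. The foliation makes each radial ray through $\Om$ meet $M$ in a single point, so $M$ is embedded and locally a bounded-gradient graph; combined with \textbf{E}-minimality and the singular-set bound $\mathcal{H}^s(\mathrm{sing}T)=0$ for $s>\max\{0,n-7\}$, one gets $\mathrm{sing}T=\emptyset$, so $M$ is smooth and complete. The foliation yields nonnegative mean curvature, which \eqref{meancurvature} together with the Hopf strong maximum principle upgrades to positive mean curvature everywhere. Uniqueness is obtained as before: any competing smooth complete self-expander $S$ in $\Om$ with tangent cone $C$ is disjoint from $M$ after a small dilation, and two applications of Lemma \ref{sesem} force $\mathrm{spt}S=M$.

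The step I expect to demand the most care is the asymptotic comparison. In the minimal case the factor $e^{-r^2/4}$ dominates every polynomial, so the strict separation $w>w_t$ near infinity is essentially automatic; here the leaves separate only at order $r^{-1}$, and the whole argument hinges on the strict positivity $H_\Si\ge h_0>0$ keeping the leading coefficient $(1-t^2)H_\Si$ bounded away from zero while the error is confined to order $r^{-3}$ by the sharp form of Lemma \ref{AsmPMC}. This is precisely where \emph{positive} mean curvature, rather than mere mean convexity, is indispensable: if $H_\Si$ vanished somewhere on $\Si$, the leading term could degenerate and the monotone ordering of the family $\{\sqrt{t}M\}$ near infinity would be unclear.
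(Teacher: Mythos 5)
Your proposal is correct and takes essentially the same route as the paper: the paper's own proof of this corollary is deliberately omitted, saying only that it is "similar to the proof of Theorem \ref{Minimalfoliation}, where the asymptotic behavior of self-expanders is described in Lemma \ref{AsmPMC}," which is exactly your plan. Your explicit comparison $w(\xi r)-w_t(\xi r)\ge (1-t^2)H_\Si(\xi)r^{-1}-(1+t^4)K_C r^{-3}$, together with the observation that compactness of $\Si$ and strict positivity $H_\Si\ge h_0>0$ keep the leading coefficient uniformly bounded below for $\de_1\le t\le\f{1+\de_1}2$, faithfully and correctly supplies the one detail the paper leaves implicit.
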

\begin{proof}
The proof is similar to the proof of Theorem \ref{Minimalfoliation}, where the asymptotic behavior of self-expanders is described in Lemma \ref{AsmPMC}. So we omit it.
\end{proof}

Combining Theorem \ref{Minimalfoliation} and Corollary \ref{posiSE}, we eventually deduce the smooth existence for mean convex case.
\begin{theorem}\label{mainTh}
Let $C$ be an $n$-dimensional $C^{3,\a}$-regular, mean convex but not minimizing cone pointing into a domain $\Om$ with $\p\Om=C$ in $\R^{n+1}$, then there is a unique smooth complete embedded \textbf{E}-minimizing self-expanding hypersurface $M$ in $\Om$ with tangent cone $C$ at infinity. Moreover, $\{\sqrt{t}M\}_{t>0}$ is a foliation of $\Om$.
\end{theorem}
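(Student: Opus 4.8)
The plan is to reduce the statement to the two existence-and-foliation results already established, Theorem~\ref{Minimalfoliation} for minimal cones and Corollary~\ref{posiSE} for cones of positive mean curvature, after first producing the minimizing object itself. First I would invoke Theorem~\ref{WESEmcC}: since $C$ is mean convex but not area-minimizing, there is a multiplicity one $\mathbf{E}$-minimizing self-expanding current $T$ in $\Om$ with $\p T=0$ whose tangent cone at infinity is $\llbracket C\rrbracket$, and by Lemma~\ref{ammse} and Lemma~\ref{smsemc} its support $M\triangleq\mathrm{spt}T$ lies in the open region $\Om$. Thus the existence half is unconditional, and it remains to show that $M$ is a smooth complete embedded hypersurface, that $\{\sqrt t M\}_{t>0}$ foliates $\Om$, that the mean curvature is positive, and that $M$ is unique — exactly the conclusions of Theorem~\ref{Minimalfoliation} and Corollary~\ref{posiSE}.

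The next step is to observe that the remaining machinery is common to both cited theorems. If $C$ is minimal, Theorem~\ref{Minimalfoliation} applies verbatim; if $C$ has positive mean curvature everywhere, Corollary~\ref{posiSE} applies. In each case one runs the same scheme: the non-crossing Lemma~\ref{sesem} together with a continuity argument in the scaling parameter $\de_1=\sup\{\de:\sqrt s M\cap M=\emptyset,\ s\in(0,\de)\}$ shows $\{\sqrt t M\}_{t>0}$ foliates $\Om$; smoothness then follows because every point of $M$ lies on a leaf which is locally a bounded-gradient graph, so with the codimension-$7$ singular set estimate $\mathrm{sing}T=\emptyset$; the foliation forces $H\ge0$ into $\Om$, and \eqref{meancurvature} with the Hopf maximum principle upgrade this to $H>0$; and uniqueness follows by sliding any competitor self-expander against the foliation via Lemma~\ref{sesem}. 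The only case-dependent ingredient is the sharp asymptotic of the graph function $f$ of $M$ over $C$ at infinity, used to push $\de_1$ up to $1$: exponential decay from Corollary~\ref{updownC} in the minimal case, and $f\sim H_\Si/r$ from Lemma~\ref{AsmPMC} in the positive case.

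The genuinely delicate point — and where I expect the main obstacle — is that a mean convex cone need be neither minimal nor strictly mean convex: its link $\Si$ may satisfy $H_\Si\ge0$ with $H_\Si$ vanishing on a nonempty proper subset, so neither asymptotic regime applies globally. I would first note that the two-sided bound of Lemma~\ref{AsmPMC}, namely $\tfrac{H_\Si}{r}-\tfrac{K_C}{r^3}\le f\le\tfrac{H_\Si}{r}+\tfrac{K_C}{r^3}$, in fact extends to any mean convex $C$, because the barriers $\z_\a=\tfrac{H_\Si}{r}+\tfrac{\a}{r^3}$ and the inequality \eqref{LCphiaHCr-3} require only boundedness of $H_\Si$, $\De H_\Si$ and $|A_\Si|^2$, not positivity. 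On the locus $\{H_\Si>0\}$ this gives, with $f_t(x)=tf(x/t)$ the graph function of $tM$, the strict ordering $f(\xi r)-f_t(\xi r)\ge(1-t^2)\tfrac{H_\Si}{r}-O(r^{-3})>0$ for $0<t<1$ and large $r$, exactly as in Corollary~\ref{posiSE}. The obstruction is on $\{H_\Si=0\}$: there the upper bound and Lemma~\ref{lowgse} only yield $0<f=O(r^{-3})$ with no matching lower bound of order $r^{-3}$, so $f>f_t$ does not follow. The hard part is therefore to establish a refined two-sided expansion $f=\tfrac{H_\Si}{r}+\tfrac{a(\xi)}{r^3}+o(r^{-3})$ with $a(\xi)>0$ on the minimal locus, after which $f(\xi r)-f_t(\xi r)=\tfrac{a(\xi)(1-t^4)}{r^3}+o(r^{-3})>0$ closes the comparison; alternatively one may argue directly by a maximum principle comparing $M$ with $\sqrt s M$ near infinity. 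Once this uniform asymptotic ordering is in hand, the foliation, smoothness, positivity and uniqueness all follow as in the two extreme cases, and the theorems combine to give the mean convex statement.
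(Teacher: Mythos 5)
Your reduction is sound as far as it goes, and you correctly locate the obstruction: for a merely mean convex cone the link may satisfy $H_\Si\ge0$ with a nonempty zero set, and there the available bounds are incompatible in scale. Indeed Lemma \ref{lowgse} (which does apply to mean convex cones) only gives the exponential lower bound $f\ge\ep_2 r^{-n-1}e^{-r^2/4}\varphi_1$, while your extension of Lemma \ref{AsmPMC} gives the upper bound $f\le K_C r^{-3}$ on $\{H_\Si=0\}$; between $r^{-n-1}e^{-r^2/4}$ and $r^{-3}$ nothing forces the ordering $f>f_t$. But your proposal then stops exactly where the proof has to begin: the refined expansion $f=\frac{H_\Si}{r}+\frac{a(\xi)}{r^3}+o(r^{-3})$ with $a(\xi)>0$ on the minimal locus is asserted as "the hard part" and never established, and nothing in the paper's lemmas delivers it — it would require a separate analysis of the linearized operator $L_C$ on the zero set of $H_\Si$, and it is not even clear a priori that the expansion takes this form there. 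The fallback "argue directly by a maximum principle comparing $M$ with $\sqrt{s}M$ near infinity" is precisely what noncompactness blocks without decay control. Since in your scheme the foliation, the smoothness (via star-shapedness of the leaves), the positivity of $H$, and the uniqueness all funnel through this asymptotic ordering, the proposal is incomplete at its critical step.

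The paper's actual proof of Theorem \ref{mainTh} avoids the refined asymptotics entirely by regularizing the cone rather than the expansion: since $\Si$ is mean convex but not minimal, running mean curvature flow on the link for a short time produces embedded links $\Si_t$ with \emph{strictly} positive mean curvature, so Corollary \ref{posiSE} applies verbatim to each $C\Si_t$ and yields smooth self-expanders $M_t$; one then lets $t_i\to0$ and takes $M=\mathrm{spt}\,T$ for the limit current. Smoothness of $M$ on compact sets is not obtained from a foliation plus the codimension-$7$ estimate, but from the uniform bound $\frac12\lan X,\nu_{t_i}\ran=H_{t_i}\ge\ep_6$ on $M_{t_i}\cap B_{r_2}$ (propagated inward by the maximum principle for \eqref{meancurvature}), which makes the $M_{t_i}$ uniformly radial graphs, so the limit is a graph and hence regular; the foliation follows because one-sidedness of $sM_{t_i}$ relative to $M_{t_i}$ passes to the limit and the strong maximum principle separates $sM$ from $M$. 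For uniqueness, the competitor $M'$ is slid against $M_{t_i}$ rather than against $M$: since $\Si_{t_i}\cap\Si=\emptyset$, the tangent cones at infinity are distinct and the separation needed to push the sliding parameter to $1$ comes for free, with the decay comparison of Lemma \ref{upgse} invoked only in the final step ruling out $M\cap M'=\emptyset$. If you wish to salvage your direct approach, the missing expansion would itself be a nontrivial result to prove; the short-time flow of the link is the device that reduces the mean convex case to the strictly mean convex one and is what your argument lacks.
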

\begin{proof}
Let $C=C\Si$ be a $C^{3,\a}$-regular cone over a mean convex hypersurface $\Si$ in $\S^n$. We assume that $C$ is not a minimal cone, or else we have finished the proof from Theorem \ref{Minimalfoliation}. Then there is a constant $\de>0$ such that $t\in[0,\de]\rightarrow\Si_t$ is a smooth mean curvature flow starting from $\Si$.
Obviously, $\Si_t$ is embedded and has positive mean curvature, so there is a smooth complete embedded \textbf{E}-minimizing self-expanding hypersurface $M_t$ in $\Om$ for $t>0$, which has positive mean curvature everywhere and $\lim_{r\rightarrow\infty}\f1rM_t=C\Si_t$.
There exists a sequence $0<t_i\rightarrow0$ such that $\llbracket M_{t_i}\rrbracket\rightharpoonup T$ in the varifold sense. Then $T$ is a multiplicity one \textbf{E}-minimizing self-expanding current in $\overline{\Om}$ with $\p T=0$. From \eqref{dTsBsC}, we get $\lim_{r\rightarrow\infty}\f1rT=\llbracket C\Si\rrbracket$.
Denote $M=\mathrm{spt}T$. By Allard's regularity theorem, there is a constant $r_2>1$ such that $M\setminus B_{r_2-1}$ is smooth with bounded geometry.
Hence $M_{t_i}\setminus B_{r_2-1}$ converges to $M\setminus B_{r_2-1}$ smoothly, which implies that the mean curvature $H$ is nonnegative in $M\setminus B_{r_2}$.
Hence the mean curvature $H$ is positive in $M\setminus B_{r_2}$ by maximum principle for \eqref{meancurvature}. Denote $\ep_6\triangleq\f12\inf_{\p B_{r_2}}H>0$. There exists a constant $\de_3\in(0,\de]$ such that the mean curvature $H_{t_i}$ of $M_{t_i}$ satisfies $H_{t_i}\ge\ep_6$ on $M_{t_i}\cap\p B_{r_2}$ for $t_i\in(0,\de_3)$. By maximum principle for \eqref{meancurvature} again, we get
$$\f12\lan X,\nu_{t_i}\ran=H_{t_i}\ge\ep_6\qquad \mathrm{on}\quad M_{t_i}\cap B_{r_2},$$
where $\nu_{t_i}$ is the unit normal vector of $M_{t_i}$.
Since $M_{t_i}\cap B_{r_2}$ is a graph over a subset of $\Om\cap\S^n$, then from the above inequality, $M\cap B_{r_2}$ is still a graph over a subset of $\overline{\Om}\cap\S^n$. Hence $M\cap B_{r_2}$ is smooth, and $M$ is a smooth complete embedded \textbf{E}-minimizing self-expanding hypersurface in $\Om$. Moreover, $M$ is mean convex pointing into $\Om$ as $M_{t_i}\rightharpoonup M$, then $M$ has positive mean curvature pointing into $\Om$ by \eqref{meancurvature} and maximum principle. Since $sM_{t_i}$ is on one side of $M_{t_i}$ for any $s\neq1$, then $sM$ is on one side of $M$. Maximum principle implies that $sM\cap M=\emptyset$ for $s\neq1$. So $\{\sqrt{t}M\}_{t>0}$ is a foliation of $\Om$.

Suppose that $M'$ is another smooth complete self-expander in $\Om$ with tangent cone $C$ at infinity.
Set
$$\de_{4,i}=\sup\left\{s\in(0,1]\Big|\ \sqrt{t}M'\cap M_{t_i}=\emptyset,\ 0<t<s\right\}.$$
If $\sqrt{\de_{4,i}}M'\cap M_{t_i}\neq\emptyset$ and $\sqrt{t}M'\cap M_{t_i}=\emptyset$ for $0<t<\de_{4,i}$, this  contradicts Lemma \ref{sesem}. Then $\sqrt{t}M'\cap M_{t_i}=\emptyset$ for $0<t\le\de_{4,i}$. Since $\lim_{r\rightarrow0}r^{-1}M_{t_i}=C\Si_{t_i}$ and $\Si_{t_i}\cap\Si=\emptyset$, then $\de_{4,i}$ attains its maximum, namely, $\de_{4,i}=1$. Hence $\bigcup_{0<t\le1}\sqrt{t}M'$ is on one side of $M_{t_i}$. Letting $t_i\rightarrow0$ implies that $\bigcup_{0<t\le1}\sqrt{t}M'$ is on one side of $M$. Since $M$ is a smooth self-expander, then either $M=M'$ or $M\cap M'=\emptyset$.

Assume $M\cap M'=\emptyset$. Now we deduce the contradiction.
There exists a constant $r_3>0$ such that outside a compact set, $M'$ can be represented as a graph over $\sqrt{t}M\setminus r_3$ with the graphic function $w_t$.
Denote $\pi_t$ be the projection from $C$ to $\sqrt{t}M$. Hence for any fixed $r\ge r_3$, for the sufficiently small $t>0$ we have $w_t\circ\pi_t(\xi r)>0$.
From Lemma \ref{upgse}, for the sufficiently large $r_3>0$, there is a constant $t_{r_3}$ such that for each $0<t\le t_{r_3}$ we have
$$w_t\circ\pi_t(x)>\ep_{7,t}|x|^{-n-1}e^{-\f{|x|^2}4}$$
on $\sqrt{t}M\setminus r_3$ for some sufficiently small constant $\ep_{7,t}>0$ depending on $t$.

Set
$$\de_5=\sup\{s\in(0,1)|\ \sqrt{t}M\cap M'=\emptyset,\ 0<t<s\}.$$
So we have $\sqrt{t}M\cap M'=\emptyset$ for $t\in(0,\de_5)$. We assume $\de_5<1$. Taking $t\rightarrow\de_5$, for any $r>0$ we get $\sqrt{\de_5}M\cap M'\cap B_r=\emptyset$ by using
Lemma \ref{sesem} in the open set $B_{2r}$. Namely, $\sqrt{\de_5}M\cap M'=\emptyset$ and $\sqrt{\de_5}M$ is on one side of $M'$. From Lemma \ref{upgse} again, there is a constant $\de_5<\de_6<1$ such that $\sqrt{\de_6}M\cap M'=\emptyset$, which contradicts to the choice of $\de_5$.
Therefore we get $\de_5=1$. Due to the fact that $\bigcup_{0<t\le1}\sqrt{t}M'$ is on one side of $M$, we deduce $M'=M$, and then complete the proof.
\end{proof}

{\bf Acknowledgement.}
The author would like to express his gratitude to Prof. Y. L. Xin for inspiring discussion and valuable advice, and to Prof. J. Jost for his constant encouragement and support. He would also like to thank Prof. F. Morgan for his interests and suggestions.
The author would like to express his sincere gratitude to the referees for valuable comments that will help to improve the quality of the manuscript.
The author is supported partially by Natural Science Foundation of Shanghai (Grant No. 15ZR1402200) and
Shanghai Municipal Education Commission.

\bibliographystyle{amsplain}

\begin{thebibliography}{10}




\bibitem{AIC} S. Angenent, T. Ilmanen, D.L. Chopp, A computed example of nonuniqueness of mean curvature flow in $\R^3$, Comm. Partial Differential Equations {\bf 20(11$\&$12)} (1995), 1937-1958.

\bibitem{BDG} E. Bombieri, E. De Giorgi, and E. Giusti, Minimal cones and the Bernstein problem, Invent. Math. {\bf 7} (1969), 243-268.

\bibitem{Br} K. Brakke, The motion of a surface by its mean curvature, Mathematical Notes, 20. Princeton University Press, Princeton, N.J., 1978.

\bibitem{Bro} Some open problems in geometric measure theory and its applications suggested by participants of the 1984 AMS summer institute. Edited by J. E. Brothers.
      Proc. Sympos. Pure Math. {\bf 44}, Geometric measure theory and the calculus of variations (Arcata, Calif., 1984), 441-464, Amer. Math. Soc., Providence, RI, 1986.

\bibitem{CHS} L. Caffarelli, R. Hardt, L. Simon, Minimal surfaces with isolated singularities, Manuscripta Math. {\bf 48} (1984), 1-18.

\bibitem{CD} Xu Cheng and Detang Zhou, Volume estimate about shrinkers, Proc. Amer. Math. Soc. {\bf 141} (2013),  no. 2, 687-696.

\bibitem{Cc} Claire C. Chan, Complete minimal hypersurfaces with prescribed asymptotics at infinity, J. Reine Angew. Math. {\bf 483} (1997), 163-181.

\bibitem{CM1} T.H. Colding and W.P. Minicozii II, A course in minimal surfaces, Graduate Studies in Mathematics, Volume 121,American Mathematical Society, Providence, Rhode Island, 2011.

\bibitem{CM2} T.H. Colding and W.P. Minicozii II, Generic mean curvature flow I: generic singularities, Ann. of Math. {\bf 175(2)} (2012), no. 2, 755-833.

\bibitem{D0} Qi Ding, The lower bounds on density for minimal but not minimizing hypercones in Euclidean space, preprint.

\bibitem{DJX} Qi Ding, J. Jost and Yuanlong Xin, Existence and non-existence of area-minimizing hypersurfaces in manifolds of non-negative Ricci curvature, Amer. J. Math., {\bf 138} (2016), no.2., 287-327.

\bibitem{EH1} Klaus Ecker and Gerhard Huisken, Mean curvature evolution of entire graphs, Ann. of Math. {\bf 130(2)} (1989),  no. 3, 453-471.

\bibitem{GT} D. Gilbarg and N. Trudinger, Elliptic Partial Differential Equations of Second Order, Springer-Verlag, Berlin-New York, (1983).

\bibitem{HL} Qing Han and Fang-Hua Lin, Nodal Sets of Solutions of Elliptic Differential Equations, preprint.

\bibitem{HaS} Robert Hardt and Leon Simon, Area minimizing hypersurfaces with isolated singularities, J. Reine Angew. Math. {\bf 362} (1985), 102-129.

\bibitem{H1} Gerhard Huisken, Asymptotic Behavior for Singularities of the Mean Curvature Flow, J. Differential Geom. {\bf 31} (1990), 285-299.

\bibitem{HP} G. Huisken and A. Polden, Geometric evolution equations for hypersurfaces, in Calculus of Variations and Geometric Evolution Problems (Cetraro, 1996), Lecture Notes in Math. 1713, Springer-Verlag, New York, 1999, 45-84.

\bibitem{Il1} T. Ilmanen, Lectures on Mean Curvature Flow and Related Equations (Trieste Notes), 1995.

\bibitem{Il2} T. Ilmanen, Singularities of mean curvature flow of surfaces, preprint, 1995.

\bibitem{Il} T. Ilmanen, A strong maximum principle for singular minimal hypersurfaces, Calc. Var. {\bf 4} (1996), 443-467.

\bibitem{INS} T. Ilmanen, A. Neves, F. Schulze, On short time existence for the planar network flow, arXiv:1407.4756, 2014.

\bibitem{K} Klaus Ecker, A local monotonicity formula for mean curvature flow, Ann. of Math. (2) {\bf 154} (2001), no. 2, 503-525.

\bibitem{Lip} Peter Li, Harmonic Functions and Applications to Complete Manifolds, University of California, Irvine, 2004, preprint.

\bibitem{L1} Fang-Hua Lin, Approximation by smooth embedded hypersurfaces with positive mean curvature, Bull. Austral. Math. Soc.  {\bf36} (1987), no. 2, 197-208.

\bibitem{L2} Fang-Hua Lin, Minimality and stability of minimal hypersurfaces in $\R^n$, Bull. Austral. Math. Soc.  {\bf36} (1987), no. 2, 209-214.

\bibitem{LY} F. H. Lin, X. P. Yang, Geometric measure theory: an introduction, Science Press, Beijing/ New York; International Press, Boston, 2002.

\bibitem{M} Robert McIntosh, Perturbing away singularities from area-minimizing hypercones, J. Reine Angew. Math. {\bf 377} (1987), 210-218.

\bibitem{Ne} A. Neves, Recent Progress on Singularities of Lagrangian Mean Curvature Flow, to appear on volume celebrating Prof. Schoen's 60th birthday.

\bibitem{NT} A. Neves, Gang Tian, Translating solutions to Lagrangian mean curvature flow, Trans. Amer. Math. Soc. {\bf 365}  (2013),  no. 11, 5655-5680.

\bibitem{ScSc} O. C. Schn$\mathrm{\ddot{u}}$rer and F. Schulze, Self-similarly expanding networks to curve shortening flow, Ann. Sc. Norm. Super. Pisa Cl. Sci. (5){\bf 6} (2007), no. 4, 511-528.

\bibitem{S} Leon Simon, Lectures on Geometric Measure Theory, Proceedings of the center for mathematical analysis Australian national university, Vol. 3, 1983.

\bibitem{SS} Leon Simon and Bruce Solomon, Minimal hypersurfaces asymptotic to quadratic cones in $\R^{n+1}$, Invent. Math. {\bf86}, 1986, 535-551.

\bibitem{S2} Leon Simon, A strict maximum principle for area minimizing hypersurfaces,  J. Differential Geom. {\bf 26} (1987), 327-335.

\bibitem{Si} J. Simons, Minimal varieties in Riemannian manifolds, Ann. Math. {\bf 88} (1968), 62-105.

\bibitem{SW} B. Solomon and B. White, A strong maximum principle for varifolds that are stationary with respect to even parametric elliptic functionals, Indiana Jour. Math. {\bf38} (1989), 683-691.

\bibitem{Sta} Nikolaos Stavrou, Selfsimilar solutions to the mean curvature flow, J. Reine Angew. Math. {\bf 499} (1998), 189-198.

\bibitem{W} Lu Wang, Uniqueness of Self-similar Shrinkers with Asymptotically Conical Ends, J. Amer. Math. Soc. {\bf 27} (2014), no. 3, 613-638.

\bibitem{Wm} Mu-Tao Wang, The Dirichlet Problem for the minimal surface system in arbitrary dimensions and codimensions, Comm. Pure Appl. Math.  {\bf 57}  (2004),  no. 2, 267-281.

\bibitem{W05} Brian White, A local regularity theorem for mean curvature flow, Ann. of Math. {\bf 161(2)}(2005), no. 3, 1487-1519.

\bibitem{Wi} Neshan Wickramasekera, A general regularity theory for stable codimension 1 integral varifolds,  Ann. of Math. {\bf 179} (2014), 843-1007.

\bibitem{Wi13} Neshan Wickramasekera, A sharp strong maximum principle and a sharp unique continuation theorem for singular minimal hypersurfaces, Calc. Var. Partial Differential Equations {\bf 51} (2014), 799-812.

\bibitem{X} Y. L. Xin, Minimal Submanifolds and Related Topics, World Scientific Publ., (2003).


\end{thebibliography}

\end{document}